\newtheorem{thm}{Theorem}
\newtheorem{prop}[thm]{Proposition}
\newtheorem{lemma}[thm]{Lemma}
\newtheorem{cor}[thm]{Corollary}
\newtheorem{condition}{Condition}
\DeclareMathAlphabet{\mathbbb}{U}{bbold}{m}{n}
\DeclareMathOperator*{\argmin}{arg\,min}
\DeclareMathOperator*{\argmax}{arg\,max}
\DeclareMathOperator*{\soft}{soft}
\DeclareMathOperator*{\hard}{hard}
\DeclareMathOperator{\diag}{diag}
\DeclareMathOperator{\Beta}{Beta}
\newcommand{\teal}[1]{{\color{teal}#1}}
\newcommand{\charcoal}{\textsf{charcoal}\xspace}
\newcommand{\charcoalproj}{$\mathsf{charcoal}_{\mathsf{proj}}$\xspace}
\newcommand{\charcoallasso}{$\mathsf{charcoal}_{\mathsf{lasso}}$\xspace}
\title{Sparse change detection in high-dimensional linear regression}
\author{Fengnan Gao\footnote{Fudan University and Shanghai Center for Mathematical Sciences. Email: \url{fngao@fudan.edu.cn}} \; and\; Tengyao Wang\footnote{London School of Economics and Political Science. Email: \url{t.wang59@lse.ac.uk}}}
\date{(\today)}
\begin{document}
\maketitle
\begin{abstract}
  We introduce a new methodology `\charcoal' for estimating the location of sparse changes in high-dimensional linear regression coefficients, without assuming that those coefficients are individually sparse. The procedure works by constructing different sketches (projections) of the design matrix at each time point, where consecutive projection matrices differ in sign in exactly one column. The sequence of sketched design matrices is then compared against a single sketched response vector to form a sequence of test statistics whose behaviour shows a surprising link to the well-known CUSUM statistics of univariate changepoint analysis. The procedure is computationally attractive, and strong theoretical guarantees are derived for its estimation accuracy. Simulations confirm that our methods perform well in extensive settings, and a real-world application to a large single-cell RNA sequencing dataset showcases the practical relevance.
\end{abstract}

\begin{comment}
% text abstract ready for copying and pasting
  We introduce a new methodology 'charcoal' for estimating the location of sparse changes in high-dimensional linear regression coefficients, without assuming that those coefficients are individually sparse. The procedure works by constructing different sketches (projections) of the design matrix at each time point, where consecutive projection matrices differ in sign in exactly one column. The sequence of sketched design matrices is then compared against a single sketched response vector to form a sequence of test statistics whose behaviour shows a surprising link to the well-known CUSUM statistics of univariate changepoint analysis. The procedure is computationally attractive, and strong theoretical guarantees are derived for its estimation accuracy. Simulations confirm that our methods perform well in extensive settings, and a real-world application to a large single-cell RNA sequencing dataset showcases the practical relevance.
\end{comment}

\section{Introduction}
\label{Sec:Intro}

The past twenty years have witnessed rapid development of statistical
methodologies for high-dimensional data sets, where the number of variables of interest is often of comparable or even larger order of magnitude than the number of observations available. The most prominent example, perhaps, is the line of work on sparse linear regression, which started from the seminal work of \citet{tibshirani1996regression}, and was developed and generalized subsequently by many others (see, e.g. \citet{fan2010selective, buhlmann2011statistics} for a general overview of this area). In many of these works, the primary focus was on how to exploit the sparsity of the regression coefficients for their successful estimation, and to achieve this, a homogeneous data generating mechanism was often assumed for simplicity of analysis.

However, it is usually unrealistic in large, high-dimensional data sets to
assume that the data generating mechanism holds true throughout. In fact,
heterogeneity is the norm rather than exception in Big Data applications.
Several attempts have been made to handle data heterogeneity in
high-dimensional linear models.  For instance,  \citet{stadler2010l1,yin2018learning,krishnamurthy2019sample} considered the problem of learning mixture of sparse linear regression, where the regression coefficient vector is sampled from a small set of sparse regression coefficients.  When observations have a temporal structure, one common way to handle heterogeneity is to break the sequence of observations into shorter time segments on which the data are more homogeneous.  This line of thinking is the driving force behind the recent revival of interest in changepoint analysis, which dates back to the early work of \citet{Page1955}, but has seen growing interest recently in high-dimensional settings, see e.g., \citet{levy2009detection,Bai2010, Zhangetal2010, HorvathHuskova2012, Cho_2014, Jirak2015, Cho_2016, wang2018high, enikeeva2021change, liu2021minimax, follain2022high, chen2022high}. 

In our linear regression setting, such a changepoint setup means that the sequence of regression coefficient vectors has a piecewise constant structure. More precisely, for an unknown sequence of changepoints $0 < z_1 <\cdots < z_{\nu} < n$ (for notational convenience, we also define $z_0 := 0$ and $z_{\nu+1}:=n$) and an unknown sequence of a regression coefficients $(\beta^{(r)}: 1\leq r\leq \nu+1)$, we assume that the data $(x_t, y_t) \in \mathbb{R}^{p}\times \mathbb{R}$, $1\leq t\leq n$ are generated according to the following model 
\begin{equation}
	\label{eqn:y-x-z}
  y_{t} = x_{t}^\top \beta_{t}+ \epsilon_{t}, \quad \text{where $\beta_{t} = \beta^{(r)}$ for $z_{r-1} < {t} \leq z_{r}$, $1\leq r \leq \nu+1$},
\end{equation}
and $(\epsilon_t)_{1\le t \le n}$ are the observational errors distributed as $N(0, \sigma^2 I_n)$ conditionally independent of $(x_t)_{1 \le t \le n}$.
The goal is to locate the changepoints $z_1,\ldots,z_{\nu}$ upon observing the response vector $Y = (y_1, \dots, y_n)^\top$ and the design matrix $X= (x_1, \dots, x_n)^\top$.

Classically, when the dimension $p$ is far smaller than $n$, \citet{bai1997estimation, bai1998estimating, julious2001inference} showed that a least-square-based approach works well in the above changepoint problem, which is equivalent to maximum-likelihood estimations under Gaussianity assumptions. Specifically, for a given $\nu$, the maximum likelihood estimator finds the optimal partition of $\{1,\ldots,n\}$ into $\nu+1$ segments such that residual sum of squares from the least-square fit within each segment is minimized. The least-square (maximum-likelihood) fit from different choices of $\nu$ can then be compared using for instance the Bayesian Information Criterion (BIC) to choose the best $\nu$, which is often solved algorithmically via dynamic programming.

In the high-dimensional setting, the above maximum-likelihood/least-square approach no longer works.  Several works have appeared to analyse such regression changepoint problems in the high-dimensional context, see for instance \citet{RinaldoWangWen2021, wang2021optimal,lee2016lasso, kaul2019efficient} and references therein.
However, in addition to the modelling assumption in~\eqref{eqn:y-x-z}, these works also impose the additional assumption that all regression coefficients $(\beta^{(r)}:1\leq r\leq \nu+1)$ are individually sparse. Given a hypothesized set of changepoints, this additional assumption allows them to form estimators of $\beta^{(r)}$, $1\leq r\leq \nu+1$, which are in turn used to form goodness-of-fit statistics for the set of hypothesized changepoints. 
% \sout{they mostly impose strict assumptions on the regression coefficients being sparse on the entire time horizon, whence the problem could be in effect reduced to a low-dimensional one via dimension reduction techniques such as Lasso. 
% This is also the usual way out in the contemporary big data era, where the high-dimensionality is ubiquitous and statisticians (and data scientists) rely on such sparsity conditions to enforce a certain extent of estimability on the otherwise unsolvable regression coefficients. 
% Nevertheless, such sparsity assumption on all regression coefficients could be theoretically an overkill on one hand, and is often possibly violated in practice on the other. }

A major difference between this work and the aforementioned existing line of works is that we do not assume that the regression coefficients within different stationary segments are individually sparse. 
Instead, we make the less stringent assumption that the difference in the regression coefficient vectors before and after each change, i.e., $\theta^{(r)}:=(\beta^{(r+1)} - \beta^{(r)})/2$, are sparse in the sense that $\|\theta^{(r)}\|_0\leq k$, for $r=1,\ldots, \nu$.  We would argue that this is a more natural assumption, since it is the change in the regression coefficients, rather than the pre- and post-change coefficients themselves, that is the quantity of interest in this statistical problem. Practically, the assumption that all regression coefficients are sparse can be violated in applications.  For instance, \citet{kraft2009genetic} argued that in genetic studies,  ``many, rather than few, variant risk alleles are responsible for the majority of the inherited risk of each common disease'', leading to non-sparse regression coefficients. However, in such examples, the task of detecting sparse changes in these regression coefficients over time can still be of interest in, e.g., identifying different development stages in gene regulatory networks in species \citep{HATLEBERG202139}.  Furthermore, our `sparsity-in-change' assumption is also more in line with the assumptions made in the high-dimensional change-in-mean problem \citep[see, e.g.][]{Cho_2014, Jirak2015, wang2018high}, where the pre- and post-change mean vectors are regarded as nuisance parameters and sparsity assumptions only need to be placed on vectors of changes for successful detection and localization of the changepoints.

Allowing for dense pre- and post-change regression coefficients makes the changepoint estimation problem considerably more challenging. In particular, the general strategy employed by existing works on high-dimensional regression changepoints that relies on forming good estimators of $(\beta^{(r)}: r\in\{1,\ldots,\nu\})$ will unlikely be successful here.  Our first contribution in this paper is to propose a novel methodology, which we call \charcoal (\underline{cha}ngepoint in \underline{r}egression via a \underline{co}mplementary-sketching \underline{al}gorithm), and works by forming a projected response vector and a sequence of projected design matrices to eliminate the dense nuisance parameter.  For simplicity of exposition, we consider the single changepoint scenario, where $\beta_i = \beta^{(1)}\mathbbb{1}_{\{i\leq z_1\}} + \beta^{(2)}\mathbbb{1}_{\{i > z_1\}}$. 
\begin{comment}
\teal{\sout{For clarity, we first look at the single changepoint scenario, where the problem \eqref{eqn:y-x-z} is reduced to 
% \( \beta_i = \beta^{(1)}\) for $ 1\le i \le z = z_1$ and \( \beta_i=\beta^{(2)}\) for $z < i < n$.
%\begin{equation}
%  \label{eqn:one-cp}
%  \begin{cases}
%    % y_i & = x_i^\top \beta^{(1)} + \epsilon_i, \quad \text{for } 1 \le i \le z (=z_1);\\
%    % y_i & = x_i^\top \beta^{(2)} + \epsilon_i, \quad \text{for } z < i \le n,
%    \beta_i  =  \beta^{(1)}, &  \text{for } 1 \le i \le z (=z_1);\\
%    \beta_i  = \beta^{(2)}, & \text{for } z < i \le n,
%\end{cases}
%\end{equation}
and our task is to determine the only changepoint $z$. }}
\teal{\sout{In the low-dimensional settings where the dimension of $\beta$ is far smaller than the number of observations, the estimation of $z$ is often achieved by looking at every possible $t$, obtaining for the samples before and after $t$ the least-square estimates of $\beta$'s or some other quantities after reparametrization, and finding $t$ such that some properly defined difference between the estimated coefficients before and after $t$ are maximized, or (often equivalently) some goodness-of-fit statistic is optimized.
See e.g., \citet{bai1997estimation,julious2001inference}.
In the modern high-dimensional regression, such an approach is impossible without making seemingly unavoidable sparsity assumptions on both $\beta^{(1)}$ and $\beta^{(2)}$, since otherwise neither coefficient is estimable. 
As such, we need a novel approach to circumvent the aforementioned estimability issues on the regression coefficients and only focus on the change rather than the exact coefficients before and after the change.  }}
\end{comment}
Under the hypothesis that the true change takes place at time $t$, we have
\begin{equation}
  \label{eqn:one-cp-t}
  \begin{cases}
    Y_{(0,t]} = X_{(0,t]} \beta^{(1)} + \epsilon_{(0,t]},\\
    Y_{(t,n]} = X_{(t,n]} \beta^{(2)} + \epsilon_{(t,n]},
  \end{cases}
\end{equation}
where the subscript $(0,t]$ indicates the concatenation of relating quantities on $(1,\dots,t)$ and $(t,n]$ that of $(t+1,\dots,n)$. 
We may think of \eqref{eqn:one-cp-t} as a two-sample problem with different regression coefficients before and after $t$. 
We assume throughout the paper that $n > p$ for otherwise it is impossible to estimate the change when both pre- and post-change parameters are dense (see further discussion at the beginning of Section~\ref{Sec:Theory}).  By invoking the complementary sketching method of \citet{gao2021twosample}, we can find matrices $A_{(0,t]}\in\mathbb{R}^{t\times (n-p)}$ and $A_{(t,n]}\in\mathbb{R}^{(n-t)\times (n-p)}$ such that $(A_{(0,t]}^\top, A_{(t,n]}^\top)^\top$ has orthogonal columns spanning the orthogonal complement of the range of $X$. By forming the projected design matrix $W_t := A_{(0,t]}^\top X_{(0,t]} - A_{(t,n]}^\top X_{(t,n]}$ and the projected response $Z := A_{(0,t]}^\top Y_{(0,t]} + A_{(t,n]}^\top Y_{(t,n]}$, we can eliminate the possibly dense nuisance parameter $\zeta := (\beta^{(2)} + \beta^{(1)})/2$ and conduct tests on $\theta^{(1)} = (\beta^{(2)} - \beta^{(1)})/2$ is zero against that it is non-zero and sparse.
% \teal{\sout{The complementary sketching works by first finding a matrix $A$ whose columns are orthogonal and span the orthogonal complement of the column space of $(x_1,\dots,x_n)^\top$ in \(\mathbb{R}^n\), then forming two sketching matrices on the response and the design respectively, which differ only in sign in rows corresponding to the second sample. 
% After applying the sketching matrices, the nuisance disappears and the results are aggregated to form a statistic, whose size measures how different the two samples are vis-à-vis the regression coefficients. 
% The decision on whether to reject the null hypothesis that the two samples have the same regression coefficients is determined by comparing the test statistic to a certain explicit threshold. }}

In light of the true changepoint at time $z_1$, the hypothesized model \eqref{eqn:one-cp-t} is only correctly specified when $t = z_1$.  The further $t$ is away from $z_1$, the less different the two samples $(X_{(0,t]}, Y_{(0,t]})$ and $(X_{(t,n]}, Y_{(t,n]})$ are, since one of the samples will be further contaminated by the data points assigned to the wrong segment by the hypothesized changepoint.  Intuitively, we would expect the aforementioned two-sample test statistics to peak around $t=z_1$, which can thus be used to estimate the location of the single changepoint.  Unfortunately, while good for testing, these two-sample test statistics have variances too large for accurate changepoint localization.  Nevertheless, the general idea of using complementary sketching to eliminate nuisance parameters is valid.  We introduce in Section~\ref{Sec:Method} several alternative statistics based on the sketched design $W_t$ and response $Z$ that do lead to good changepoint estimation performance. In particular, we will show in Section~\ref{Sec:Theory} that a variant of the \charcoal procedure achieves a rate of convergence of order $\sqrt{k/(n\|\theta^{(1)}\|_2^2)}$, up to logarithmic factors. In the course of investigating the theoretical properties, we have developed new results in understanding the asymptotic behaviour of the sketched design matrices by generalizing existing matrix-variate Beta distribution to rank-deficient cases (Lemma~\ref{lemma:generalised-beta} and Corollary~\ref{cor:generalise-betap}) and extended sub-Gaussian bounds of Beta random variables to the matrix variate case (Lemma~\ref{lem:S0t-S0z}), both of which may be of independent interest. 

\subsection{Outline of the paper}
\label{sec:intro-outline}
We present the methodology in detail in Section~\ref{Sec:Method}, including several algorithms that all use the complementary sketching idea.
Section~\ref{Sec:Theory} provides theoretical performance guarantees to the slight variants of those proposed in Section~\ref{Sec:Method}.
In the first part of Section~\ref{Sec:Numerical}, we conduct numerical experiments on the \charcoal methodology over a comprehensive range of settings for both single and multiple changepoint estimation tasks and compare our methods with other changepoint localization methods in the high-dimensional linear regression context. 
In the second part of Section~\ref{Sec:Numerical}, we study a real data example to identify changes for each gene in terms of its interaction with other genes in the gene regulatory network across various development stages of T cells.
Section~\ref{Sec:Proof} collects the proofs of the main results while we gather the proofs of the ancillary results in Section~\ref{Sec:Ancillary}.

\subsection{Notation}
\label{sec:intro-notation}
For a positive integer $p$, $[p] = \{ 1, \dots, p\}$ consists of all positive integers not exceeding $p$.
For vector $v = (v_1,\dots, v_p)^\top$, \( \diag(v)\) is a $p\times p$ matrix such that $(\diag(v))_{i,j} = \mathbbb{1}_{\{i=j\}} v_i$ for $i,j \in[p]$. 
We follow the usual definitions of $\|v\|_0 = \sum_{i\in [p]} \mathbbb{1}_{\{v_i \neq 0\}}$, \( \|v\|_2 = (\sum_{i\in[p]} v_i^2)^{1/2}\), \( \|v\|_1 = \sum_{i\in[p]} |v_i|\) and \( \|v\|_\infty = \max_{i\in[p]} |v_i|\). 

Given a matrix $A \in \mathbb{R}^{n\times m}$, we make it a convention that $A = (A_{i,j})_{i \in [n], j \in [m]} = [ A_1 \mid \dots\mid A_m] = ( a_1, \dots, a_n)^\top$, where $a_i$ is the transpose of the $i$th row of $A$ and $A_j$ is the $j$th column of $A$.
Given any set $S\subseteq \mathbb{R}$, we write $A_S$ to be the submatrix of $A$ with row indices in $S$. For instance, given positive integers $s,t$ such that $1\leq s < t\leq n$, $A_{(s,t]} := (a_{s+1}, \dots, a_t)^\top$.
We define the usual norms for $A$ as follows \( \|A\|_{\mathrm{op}} := \sup_{v\in \mathbb{R}^m: \|v\|_2 = 1} \|A v\|_2 \) and \( \|A\|_{\max} := \max_{i\in [n], j \in [m]} |A_{i,j}|\). 
Assuming $n=m$ in $A$, $\diag(A)$ is an $n \times n$ matrix such that $(\diag(A))_{i,j} := \mathbbb{1}_{i=j} A_{i,j}$ for $i \in [n]$ and $\mathrm{tr}(A) := \sum_{i\in[n]} A_{i,i}$.

For $n \ge m$, $\mathbb{O}^{n\times m}: = \{O\in \mathbb{R}^{n\times m}: O^\top O = I_m \}$.   
We define \( \mathcal{S}^{p-1} : = \{ v\in\mathbb{R}^p: \|v\|_2 = 1 \}  \) and the $k$-sparse unit ball as $B_0(k):=\{v\in\mathbb{R}^p: \|v\|_2\leq 1, \|v\|_0\leq k\}$.

\section{Methodology}
\label{Sec:Method}
In this section, we describe in detail our \charcoal algorithm for identifying the changepoints in the problem setup of~\eqref{eqn:y-x-z}.  

\subsection{Single changepoint estimation}
We start by focusing on the setting of a single changepoint estimation, i.e., $\nu=1$, which captures the essence of the difficulty of this problem.  For simplicity, we denote $z := z_1$ for the location of the only changepoint and write $m:=n-p$.  The main idea is to use data-driven projections to sketch the design matrix and the response vector to eliminate the effect of the nuisance parameters. 

Recall the data generating model \eqref{eqn:y-x-z}.  At each time point $t\in[n-1]$, we perform a two-sample test for the equality of regression coefficients before and after $t$ using data points $(x_i, y_i)_{i=1}^t$ and $(x_i, y_i)_{i=t+1}^n$ respectively.  Motivated by \cite{gao2021twosample}, this can be achieved by constructing a matrix $A \in \mathbb{O}^{n\times m}$ whose columns span the orthogonal complement of the column space of $X$. We then define for any $t \in [n-1]$
\begin{align*}
	W_t & := A_{(0,t]}^\top X_{(0,t]} - A_{(t,n]}^\top X_{(t,n]} = 2A_{(0,t]}^\top X_{(0,t]} \in \mathbb{R}^{m\times p} \\
	Z   & := A_{(0,t]}^\top Y_{(0,t]} + A_{(t,n]}^\top Y_{(t,n]} = A^\top Y \in \mathbb{R}^m.
\end{align*}
We define $\theta = (\beta^{(1)}-\beta^{(2)})/2$, $\zeta = (\beta^{(1)}+\beta^{(2)})/2$ and $\xi = A^\top \epsilon \sim N_m(0, \sigma^2 I_m)$.
By the model construction, we have
\begin{equation}
	\begin{aligned}
		Z & =A^\top_{(0,z]} Y_{(0,z]} + A^\top_{(z,n]} Y_{(z,n]} = A^\top_{(0,z]} (X_{(0,z]} \beta^{(1)} + \epsilon_{(0,z]}) + A^\top_{(z,n]} (X_{(z,n]} \beta^{(2)} + \epsilon_{(z,n]}) \\
		  & = A_{(0,z]}^\top X_{(0,z]} (\theta + \zeta)  - A_{(z,n]}^\top X_{(z,n]} ( \theta - \zeta) + \xi =
		W_z \theta + \xi,
	\end{aligned}
	\label{Eqn:Z-Wz}
\end{equation}
whence we have eliminated the nuisance parameter $\zeta$, and obtain the sketched data in the form of $(Z, (W_t)_{t\in[n-1]})$.
By \eqref{Eqn:Z-Wz} and the sparsity assumption on $\theta$, $Z$ can be approximated by a sparse linear combination of the columns of $W_z$.
Therefore, the changepoint localization problem is reduced to finding $t$ such that $W_t$ forms a `best' sparse linear approximation to $Z$.

%We discuss in the following several ways to achieve this.
As mentioned in the introduction, a naive way to achieve this would be based on the two-sample test statistics introduced in \citet{gao2021twosample}. Specifically, let $Q = (Q_1,\ldots,Q_{n-1})^\top$ be defined such that
\[
	Q_t := \{\diag(W_t^\top W_t)\}^{-1/2} W_t^\top Z.
\]
We view $Q_t$ as the vector of the correlations between columns of $W_t$ and $Z$, where we naturally seek to find the time point $t$ such that such correlations are as large as possible. To take into account of possible observational errors, we first remove small entries of $Q_t$ via an entrywise hard-thresholding operation $\hard(Q_t, \lambda)$ for $\hard(v, \lambda): (v_i)_{i=1}^p \mapsto (v_i\mathbbb{1}_{\{|v_i|\geq \lambda\}})_{i=1}^p$, where the threshold level $\lambda$ is a tuning parameter. This allows us to estimate the location of the changepoint via $\hat z^{\hard} := \argmax_{t\in[n-1]} \|\hard(Q_t, \lambda)\|_2$. Note that $\|\hard(Q_t, \lambda)\|_2$ is the statistic from \citet{gao2021twosample} to test whether the two samples $(X_{(0,t]}, Y_{(0,t]})$ and $(X_{(t,n]}, Y_{(t,n]})$ have the same regression coefficient against the alternative that there is a sparse difference. 
% Conceptually, such a two-sample testing problem is only correctly specified when $t=z$, as otherwise one of the samples is generated by a mixture of both pre- and post-change coefficients.
As is argued before, we expect that two-sample testing statistics gives the strongest signal against the null of no change at $t = z$ --- the only point where the two-sample problem is correctly specified. 

However, the changepoint estimator $\hat z^{\hard}$ is less than ideal in practice, as the discontinuity of the hard-thresholding function creates large variabilities in the test statistics. Moreover, the theoretical guarantees given in \citet{gao2021twosample} becomes increasingly inapplicable for test statistics away from the true changepoint as one of the two samples contains a mixture of data both before and after the change. Coupled with the fact that $W_t$ has large variance when $t$ is close to the boundary, $Q_t$ may have a large number of entries above the hard-thresholding level $\lambda = 2\sqrt{\log p}$ as  recommended in \citet{gao2021twosample}.  Empirically, this is evidenced by high variance of the test statistics near the two endpoints of the interval for changepoint detection, as shown in Figure~\ref{Fig:0}. Quite often, this boundary effect may overwhelm the main signal near the true changepoint, leading to a spurious changepoint being estimated near the boundary. One way to alleviate the instability problem of $\hat z^{\hard}$ is to replace the hard-thresholding in $\hat{z}^{\hard}$ by a soft-thresholding operation on each entry of $Q$.  The changepoint is then estimated by $\hat{z}^{\mathrm{soft}} = \argmax_{t\in[n-1]} \| \soft(Q_t,\lambda)\|_2$, where $\mathrm{soft}(v, \lambda): (v_i)_{i=1}^p \mapsto (\mathrm{sign}{(v_i)} \max(|v_i| - \lambda,0 ))_{i=1}^p$ with a tuning parameter $\lambda$.
The continuity of the soft-thresholding function reduces the variance in the test statistics, and in the ensuing changepoint estimator.
However, as also shown in Figure~\ref{Fig:0}, the sequence of test statistics $(\| \soft(Q_t,\lambda)\|_2)_{t\in[n-1]}$ could still exhibit undesirably large, although less so than $(\| \hard(Q_t,\lambda)\|_2)_{t\in[n-1]}$, variations when $t$ is close to the boundary.  

\begin{figure}[htbp]
\includegraphics[width=0.95\textwidth]{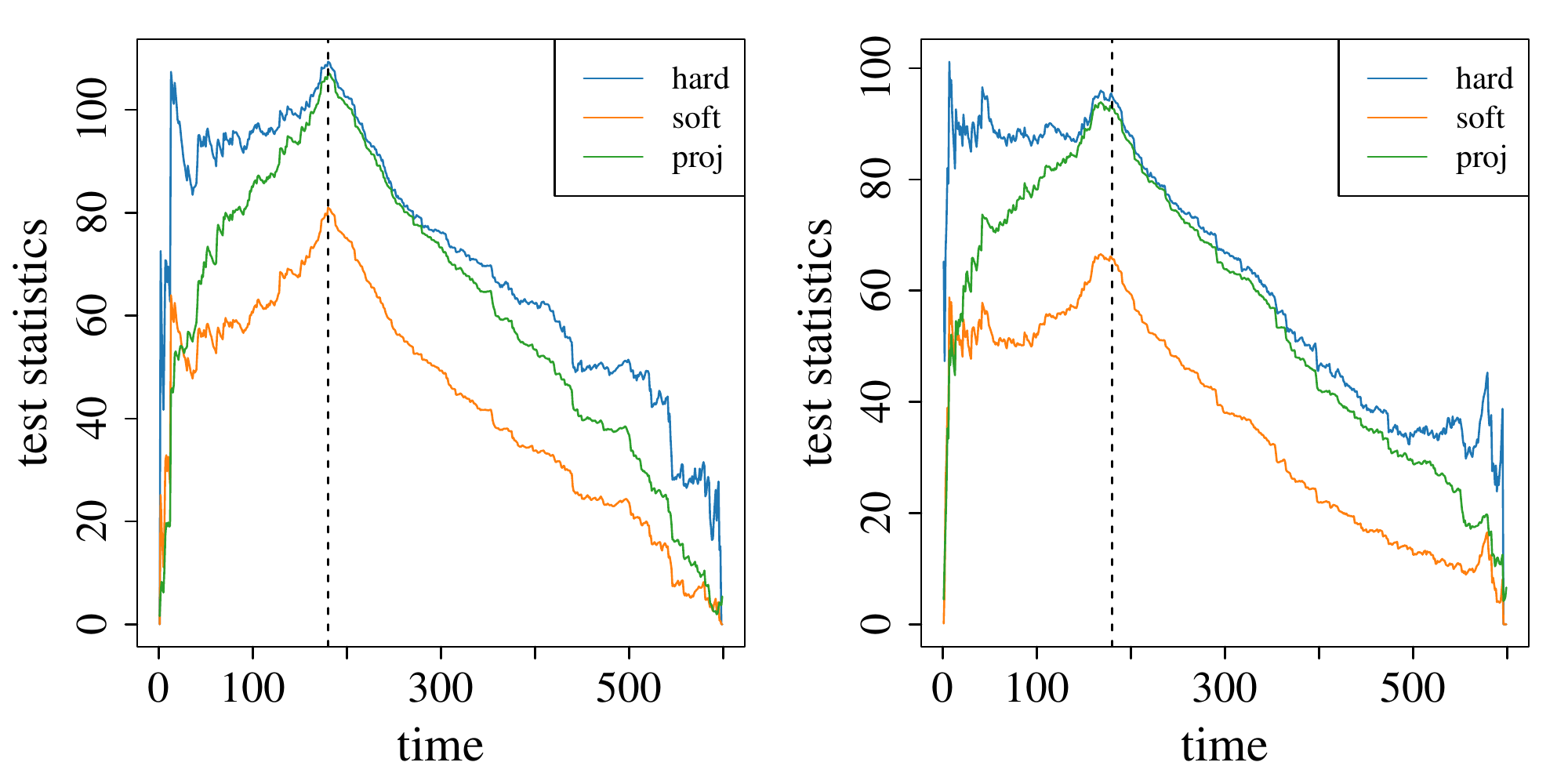}
\caption{\label{Fig:0} Visualization of different test statistics for changepoint localization.  We compare the hard-thresholded test statistics $\|\hard(Q_t, \lambda)\|_2$ used to estimate $\hat{z}^{\hard}$ (denoted by \texttt{hard}), the soft-thresholded test statistics $\|\mathrm{soft}(Q_t, \lambda)\|_2$ used to estimate $\hat{z}^{\mathrm{soft}}$ (denoted by \texttt{soft}) and the projected statistics $|\hat v^\top Q_t|$ in Algorithm~\ref{Algo:Estimation2} (denoted by \texttt{proj}) over two random realizations. Here, $n=600$, $p=200$, $\|\beta_1-\beta_2\|_0=10$, $\|\beta_1-\beta_2\|_2=8$, and the true change takes place at $z=180$, as indicated by the dashed lines.  In both panels, we observe that both $\|\hard(Q_t, \lambda)\|_2$ and $\|\mathrm{soft}(Q_t, \lambda)\|_2$ exhibit relatively strong boundary effect.}%  Though all three estimates are correct in the left panel, but it is possible that with slightly worse luck, the boundary effect would yield totally wrong estimates for $\hat{z}^{\hard}$ and $\hat{z}^{\mathrm{soft}}$, as shown in the right panel. }
\end{figure}

To avoid such boundary effect, we propose instead to aggregate the test statistics $(Q_t)_t$ via a projection-based approach. The key insight here is that, away from the boundary, the matrix $Q = (Q_1,\ldots,Q_{n-1})$ can be well-approximated by a rank-one matrix whose leading left singular vector is proportional to $\theta$. Hence, by first estimating $\theta/\|\theta\|_2$ via the leading left singular vector of $\mathrm{soft}(Q,\lambda)$, we can aggregate each vector of correlation $Q_t$ along the direction of $\hat v$ and estimate the changepoint by $\hat z:= \argmax_{t\in [\alpha n, (1-\alpha)n]} |\hat v^\top Q_t|$. This approach is summarized in Algorithm~\ref{Algo:Estimation2}. We allow Algorithm~\ref{Algo:Estimation2} to output both the changepoint estimator $\hat z$ and a test statistic $H_{\max}$, which can be used in our multiple changepoint algorithm to determine if an estimated changepoint is spurious.

% The aforementioned method is detailed in Algorithm~\ref{Algo:Estimation}.
To compute the sequence $(Q_t)_{t\in[n-1]}$ in Algorithm~\ref{Algo:Estimation2}, observe that the same $A$ and $Z$ can be used for all $t\in[n-1]$ and hence only need to be computed once.
It is worth noting that we exploit the structures of the sketched designs $(W_t)_t$ to greatly simplify their computations.
Recall that $a_t \in \mathbb{R}^{n-p}$ is the $t$th row vector of $A$, i.e., $A = (a_1, \dots, a_n)^\top$. $W_t$ are computed via the simple iterative scheme $W_0 = 0$ and $W_t = W_{t-1} + 2a_tx_t^\top$ for $t\in[n-1]$. As a common measure, we introduce the burn-in parameter ${\alpha}$ so that we forgo the possibilities of having changepoints in $(0, {\alpha} n) \cup ((1-{\alpha})n, n)$.

While the main focus of our current work is the changepoint localization problem, we remark that Algorithm~\ref{Algo:Estimation2} can be easily adapted to test the existence of a single changepoint in the sequence of regression coefficients. Specifically, we can construct the test
\begin{equation}
	\label{eq:test}
	\psi_{\alpha, \lambda, T} := \mathbbb{1}\Bigl\{\max_{t\in[\alpha n, (1-\alpha)n]} \|\soft(Q_t, \lambda)\| \geq T\Bigr\},
\end{equation}
where $T$ is some appropriate threshold. 

\begin{algorithm}[htbp]
\DontPrintSemicolon
	\KwIn{$X \in \mathbb{R}^{n \times p},  Y \in \mathbb{R}^{n}$ satisfying $n-p > 0$, a soft threshold level $\lambda \geq 0$, burn-in parameter ${\alpha}\geq 0$}
    Set $m \leftarrow n-p$\;
    Form $A\in\mathbb{O}^{n\times m}$ with columns orthogonal to the column space of $X$\;\label{Algo:Estimation1-2}
    Compute $Z\leftarrow A^\top Y$\;
    Set $W_0 = \mathbf{0}_{m\times p}$\; \label{Algo:Estimation1-4}
    \For{$1\le t \le n-1$}{
    	Compute $W_t \leftarrow W_{t-1} + 2 a_t x_t^\top$\;
    	Compute $Q_t = \{\diag(W_t^\top W_t)\}^{-1/2} W_t^\top Z$\;\label{Algo:Estimation1-7}
    	%Compute $H_t \leftarrow \|\mathrm{soft}(Q_t, \lambda)\|_2$\;
    }
	Form $Q := (Q_{\lfloor \alpha n\rfloor },\dots, Q_{\lceil (1-\alpha)n\rceil})^\top$\;
	% Compute $\hat M = \argmax_{M: \| M\|_{\mathrm{F}} \le 1} $\;
	Compute $\hat{v} \leftarrow$ the leading left singular vector of $\soft(Q, \lambda)$\; \label{Algo:Estimation2-3}
  \KwOut{$\hat{z} := \argmax_{\alpha n \le  t \le  (1-\alpha)n} |\hat v^\top Q_t| $ and $H_{\max} := \max_{\alpha n \le t \le (1-\alpha)n} \| \soft(Q_t, \lambda)\|$.} % $H_{\max} := | \hat{v}^\top Q_{\hat z}|$.}
	\caption{Pseudocode for changepoint estimation}
	\label{Algo:Estimation2}
\end{algorithm}

Finally, we mention that another natural approach to find the $W_t$ whose columns form the best sparse linear approximation of $Z = W_z\theta + \xi$ is to fit a sparse linear model by regressing $Z$ against $W_t$ and compare the goodness-of-fit across $t$ via the Bayesian Information Criterion (BIC).
We choose the BIC for the model selection purpose, though it is conceivably straightforward to apply any other model selection criteria. The pseudocode for this procedure is given in Algorithm~\ref{Algo:lassobic}. 
Specifically, for appropriately chosen $(\lambda_t)_{t\in[n-1]}$, we compute first the Lasso solutions in Step~\ref{alg:step:glmnet} and then the corresponding BICs in Step~\ref{alg:step:bic}.
In practice, the sequence of regularizing parameters $(\lambda_t)_t$ may be chosen via cross-validation for each $t$.

\begin{algorithm}[htbp]
\DontPrintSemicolon
	\KwIn{$X \in \mathbb{R}^{n \times p},  Y \in \mathbb{R}^{n}$ satisfying $n > p$,  ${\alpha}>0$ and a sequence $(\lambda_t)_{t\in[n-1]}$ }
	Follow Algorithm~\ref{Algo:Estimation2} until line \ref{Algo:Estimation1-4}\;
	\For{$1\le t \le n-1$}{
		Compute $W_t \leftarrow W_{t-1} + 2 a_t x_t^\top$\;
        Compute the Lasso estimator
        $
        \hat\theta_t  \leftarrow \argmin_{v \in \mathbb{R}^{p}}\bigl\{ \frac{1}{2m}\|Z-W_t v\|_2^2 + \lambda_t\|v\|_1\bigr\}
        $\;
		\label{alg:step:glmnet}
    Compute $H_t \leftarrow -(\|Z - W_t \hat\theta_t\|_2^2  + \|\hat\theta_t\|_0 \log m )$\;\label{alg:step:bic}
	}
\KwOut{$\hat{z} := \argmax_{{\alpha} n \le t \le (1-{\alpha})n} H_t $} % and $H_{\max}:=H_{\hat z}$.}
	\caption{Pseudocode for changepoint estimation with Lasso with BIC}
	\label{Algo:lassobic}
\end{algorithm}

Algorithm~\ref{Algo:Estimation2} has a computational complexity of $O(n^2 p)$, with the most computationally intensive step being its Step~\ref{Algo:Estimation1-2} to form the sketching matrix $A$ (e.g.\ via a QR decomposition). For Algorithm~\ref{Algo:lassobic}, each Lasso step has a computational cost of $O(k^2n)$ \citep{efron2004least}, leading to an overall computational complexity of $O(n^2(p+k^2))$. It is remarkable that for sparse signals ($k = O(\sqrt{p})$), the changepoint algorithms we proposed here has essentially the same computational complexity as the complementary-sketching-based two-sample test \citep{gao2021twosample} for any hypothesized changepoint location $t$. 

\subsection{Multiple changepoint estimation}

The single changepoint estimation procedure described above can be combined with a generic top-down multiple changepoint localization method, such as binary segmentation \citep{vostrikova1981detecting}, wild binary segmentation \citep{fryzlewicz2014wild} and its variants \citep[e.g.,][]{BaranowskiEtAl2019, kovacs2020seeded, fryzlewicz2020detecting}, to iteratively identify multiple changepoints. For concreteness, we describe an approach combining Algorithm~\ref{Algo:Estimation2} with the narrowest-over-threshold method of \citet{BaranowskiEtAl2019}. Algorithm~\ref{Algo:Multiple} is a slight generalisation of \citet[Algorithm~1]{BaranowskiEtAl2019}. It takes as input a single changepoint estimation procedure $\hat z$ and a testing procedure $\psi$. When the data $D_1,\ldots, D_n$ are the covariate-response pair $(X_i, Y_i)_{i\in[n]}$, we may apply Algorithm~\ref{Algo:Estimation2} or~\ref{Algo:lassobic} to obtain $\hat z$ and define $\psi(X,Y) := \mathbbb{1}_{\{H_{\max} > T\}}$ for some $T$ using the output $H_{\max}$ of Algorithm~\ref{Algo:Estimation2}. However, note that both Algorithms~\ref{Algo:Estimation2} and~\ref{Algo:lassobic} require the number of observations to be larger than the dimension for the complementary sketching to work. If this is not satisfied, we simply define $\hat z(X,Y) := 0$ and $\psi(X,Y) := 0$.

Essentially, in Algorithm~\ref{Algo:Multiple}, we generate multiple intervals and run the single changepoint algorithm on each interval to obtain candidate changepoint estimates and test results. We choose the candidate changepoint associated with the narrowest interval for which the test rejects the null, and add that to the set of estimated changepoints. We then segment the data at this estimated changepoint, and repeat the above process recursively on the data to the left and right segments, using only intervals lying completely within each segment. The process terminates when none of the tests reject the null. Furthermore, for practical reasons, we recommend combining Algorithm~\ref{Algo:Multiple} with some second-stage refinements, for which we discuss in more details in Section~\ref{sec:simu-multi}.

\begin{algorithm}[htbp]
\DontPrintSemicolon
\SetKwFunction{NOT}{NOT}\SetKwProg{Fn}{Function}{}{end}
	\KwIn{Data $D_1,\ldots,D_n$, number of intervals $M$, burn-in parameter $\varpi>0$, single changepoint estimation procedure $\hat z$ and a single changepoint testing procedure $\psi$}
    Set $\hat Z \leftarrow \emptyset$ and generate a set of $M$ intervals $\mathcal{M} := \{(s_1, e_1],\ldots, (s_M, e_M]\}$ independently and uniformly from $\{(a,b]: 0\leq a < b \leq n\}$. \;
    Run $\NOT(0, n)$ where $\texttt{NOT}$ is defined below.\;
    Let $\hat \nu \leftarrow |\hat Z|$ and sort elements of $\hat Z$ in increasing order to yield $\hat z_1 < \cdots  < \hat z_{\hat\nu}$.\;
    \vskip 0.5ex
    \KwOut{$\hat{z}_1,\ldots,\hat{z}_{\hat\nu}$}
    \vskip 0.5ex
    \Fn{\NOT{$s$, $e$}}{
%        Set $\mathcal{M}_{s,e} \leftarrow \{m: (s_m, e_m] \subseteq (s, e]\}$\;
%        \For{$m \in \mathcal{M}_{s,e}$}{
%                Run Algorithm~\ref{Algo:Estimation2} with input $X_{(s_m, e_m]}$, $Y_{(s_m, e_m]}$, $\lambda$ and $\alpha$, and let $\hat{z}^{(m)}$ and $H_{\max}^{(m)}$ be the output.\;\label{Algo:MultipleStep7}
%            }
        \label{Line5}Set $\mathcal{R}^{(s,e]}\leftarrow\{m: (s_m, e_m] \subseteq (s, e],\, \psi(D_{(s_m+ n\varpi, e_m- n\varpi]}) = 1\}$\;
        \If{$\mathcal{R}^{(s,e]}\neq \emptyset$}{
            \label{Line7}$m_0 \leftarrow \argmin_{m\in\mathcal{R}^{(s,e]}} (e_m - s_m)$\;
            $b\leftarrow  s_{m_0} + \hat z(D_{(s_m, e_m]})$\;
            $\hat Z \leftarrow \hat Z \cup \{b\}$\;
            \NOT{$s$, $b$}\;
            \NOT{$b$, $e$}\; } 
          }
	\caption{Pseudocode for multiple changepoint estimation}
	\label{Algo:Multiple}
\end{algorithm}

% \newpage
\section{Theoretical guarantees} % for a single changepoint}
\label{Sec:Theory}
In this section, we establish theoretical guarantees for the changepoint procedures proposed in Section~\ref{Sec:Method}. We start by focusing on the single changepoint estimation problem. For simplicity of analysis, we will assume that the noise variance $\sigma^2$ is known in this section, which by scale invariance can be further assumed to be equal to 1. We discuss practical aspects of estimating $\sigma^2$ in Section~\ref{sec:tuning-variants}.  We first present two conditions, which we will need to establish the results in this section.
\begin{condition}
	\label{cond:design}
	All entries of $X$ are independent standard normals. % $x_i \sim N_p(0, I_p)$ for $i \in [n]$.
\end{condition}
\begin{condition}
	\label{cond:regime}
	$n, z, p$ satisfy that $n>p$, $z/n \to \tau \in (0,1)$ and $(n-p)/n \to \eta \in (0,1)$ as $\min(z,n,p) \to \infty$.
\end{condition}
The design Condition~\ref{cond:design} requires that the rows of the design matrix $X = (x_1, \dots, x_n)^\top$ follow the isotropic Gaussian distribution.  Condition~\ref{cond:regime} specifies the asymptotic regime we work in. Note that the assumption $n > p$ is necessary, since otherwise, even if $z$ is known, it is impossible to test if $\|\theta\|_2 = 0$ against a sparse alternative (see the discussion of condition (C2) in \citet{gao2021twosample}).
The key ingredient of our theoretical analysis is the following proposition, which shows that $W_t^\top W_z$ is close to a multiple of identity in terms of their actions on sparse vectors.  
We impose both conditions~\ref{cond:design} and~\ref{cond:regime} only to enable the application of the existing random matrix theory on the limiting spectral measure of matrix-variate Beta distributions in the proof of Proposition~\ref{Prop:WtWz}.  In principle, even if the above conditions are violated, the theoretical results in the rest of the section hold for any data $(X,Y)$ such that \eqref{Eq:sup-WtWz} is satisfied.  In particular, we remark that the empirical study in Section~\ref{Sec:Model-Mis} has demonstrated that our methodology exhibits good finite-sample performance even when the above two conditions do not hold.

\begin{prop}
	\label{Prop:WtWz}
	Suppose that Conditions~\ref{cond:design} and~\ref{cond:regime} are satisfied and define
	\[
		g(t; z) := \begin{cases}
			4t(n-z) (n-p)/n^2 & \text{if }  1\le t \le z,  \\
			4z(n-t) (n-p)/n^2 & \text{if }  z < t \le n-1.
		\end{cases}
	\]
	There exists a constant $C_{\tau,\eta}>0$, depending only on $\tau$ and $\eta$ such that with probability 1, for any fixed $v\in\mathcal{S}^{p-1}$ and $\ell \in [p]$, we have for all but finitely many $p$'s that
	\begin{equation}
		\sup_{t\in[n-1]} \sup_{u\in B_0(\ell)} u^\top \bigl\{W_t^\top W_z - g(t; z) I_p\bigr\}v \leq C_{\tau,\eta} \sqrt{\ell n \log p}.
		\label{Eq:sup-WtWz}
	\end{equation}
	\end{prop}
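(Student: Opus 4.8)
The plan is to reduce the claim to an analysis of $W_t^\top W_z$ for a single worst-case sparse vector $u$ of support size $\ell$, and then take a union over the finitely many time points $t \in [n-1]$ and, via a covering argument, over sparse directions $u \in B_0(\ell)$. First I would exploit the iterative structure $W_t = 2A_{(0,t]}^\top X_{(0,t]}$ to write $W_t^\top W_z$ explicitly in terms of the orthogonal sketch $A$ and the Gaussian design $X$. Conditioning on $A$, the matrix $X$ is still (by rotational invariance of the isotropic Gaussian) a fresh $n\times p$ Gaussian. The key observation is that $A_{(0,t]}^\top A_{(0,z]}$ and related products of sub-blocks of $A$ are governed by matrix-variate Beta distributions; since $A\in\mathbb{O}^{n\times m}$ is the orthogonal complement of the column space of $X$, the joint law of these Gram submatrices is a (possibly rank-deficient) matrix Beta, which is exactly what Lemma~\ref{lemma:generalised-beta} and Corollary~\ref{cor:generalise-betap} are set up to handle. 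I would use those results to identify the limiting spectral behaviour of $A_{(0,t]}^\top A_{(0,z]}$ and conclude that $\mathbb{E}[W_t^\top W_z \mid A]$, after averaging out the Gaussian $X$, concentrates around $g(t;z) I_p$ — the scalar $g(t;z)$ being $4\,|(0,t]\cap(0,z]|\cdot(\text{trace factor})$, which matches the two cases in the definition once one computes $\mathrm{tr}(A_{(0,t]}^\top A_{(0,t]}) \approx |(0,t]|\cdot m/n$ and the overlap term.

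Next I would control the fluctuation $u^\top\{W_t^\top W_z - g(t;z)I_p\}v$ for fixed unit vectors $u,v$. Writing $W_z = 2A_{(0,z]}^\top X_{(0,z]}$, the quantity $u^\top W_t^\top W_z v = 4\, (X_{(0,t]} u)^\top A_{(0,t]} A_{(0,z]}^\top (X_{(0,z]} v)$ is, conditionally on $A$, a bilinear form in Gaussian vectors $X_{(0,t]} u$ and $X_{(0,z]} v$ whose coordinates are jointly Gaussian (with cross-covariance determined by $u^\top v$ and the index overlap). Such a bilinear form concentrates around its mean at the Gaussian/sub-exponential rate: the deviation is $O_{\mathbb{P}}(\sqrt{\text{(effective dimension)}})$ with sub-exponential tails, where the effective dimension is of order $n$ after using $\|A_{(0,t]} A_{(0,z]}^\top\|_{\mathrm{op}} \le 1$. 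Here is where Lemma~\ref{lem:S0t-S0z} — the matrix-variate sub-Gaussian bound for Beta matrices — comes in, to handle the randomness of $A$ itself rather than just the conditional Gaussian randomness, and to get a bound uniform over $t$. Putting the conditional Hanson–Wright / Bernstein bound together with this, I obtain, for each fixed $u\in\mathcal{S}^{p-1}$ and fixed $t$, a tail bound of the form $\mathbb{P}(u^\top\{W_t^\top W_z - g(t;z)I_p\}v > C\sqrt{n\log p}) \le p^{-c}$ for a suitably large constant.

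Finally I would union-bound. There are only $n-1 < p^{O(1)}$ values of $t$ (using $n = O(p/\eta)$ from Condition~\ref{cond:regime}), and for the supremum over $u\in B_0(\ell)$ I would use a standard $\epsilon$-net over each of the $\binom{p}{\ell}$ coordinate subspaces, of size $(C/\epsilon)^\ell$ per subspace, so the total cardinality is $p^{O(\ell)}$; the net error is absorbed by the operator-norm control from Proposition's own conclusion (a self-bounding argument, or just taking $\epsilon$ polynomially small). Taking the threshold $C_{\tau,\eta}\sqrt{\ell n\log p}$ large enough makes the union bound summable in $p$, so Borel–Cantelli gives that almost surely the inequality holds for all but finitely many $p$. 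The main obstacle, I expect, is not the Gaussian concentration step but rather establishing the right almost-sure control on the random orthogonal sketch $A$ uniformly over all $t\in[n-1]$ simultaneously — i.e.\ translating the limiting-spectral-distribution statements for matrix Beta ensembles into a genuine non-asymptotic, uniform-in-$t$ bound with the correct $\sqrt{n\log p}$ scaling; this is precisely the content that Lemmas~\ref{lemma:generalised-beta}–\ref{lem:S0t-S0z} are designed to supply, and stitching them together while tracking the burn-in-free range $t\in[n-1]$ (including $t$ near the endpoints, where $g(t;z)$ degenerates) is the delicate part.
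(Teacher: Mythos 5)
There is a genuine gap, and it sits at the core of your concentration step. You claim that, conditioning on $A$, the matrix $X$ is ``still a fresh $n\times p$ Gaussian'' by rotational invariance, and you then want to treat $u^\top W_t^\top W_z v = 4\,(X_{(0,t]}u)^\top A_{(0,t]}A_{(0,z]}^\top (X_{(0,z]}v)$ as a conditional Gaussian bilinear form amenable to Hanson--Wright/Bernstein. But $A$ is constructed \emph{from} $X$: its columns span the orthogonal complement of the column space of $X$, so $A^\top X = 0$ identically. Conditional on $A$, the matrix $X$ is confined to a $p$-dimensional column space and is nothing like a fresh Gaussian; in particular $\mathbb{E}[W_t^\top W_z \mid A]$ cannot be computed by averaging an independent $X$ against fixed blocks of $A$, and the conditional Hanson--Wright step collapses. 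Likewise, the matrix-Beta structure in the paper is not attached to the blocks $A_{(0,t]}A_{(0,z]}^\top$ as you suggest, but to the design itself: Lemma~\ref{lemma:generalised-beta} and Corollary~\ref{cor:generalise-betap} say that $S_{0,t}$, suitably whitened by $S_{0,z}$, is matrix-Beta \emph{conditionally on} $(S_{0,z},S_{z,n})$, which is exactly the decoupling your argument lacks.

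What makes the paper's proof work is an algebraic identity that removes $A$ altogether before any probabilistic estimate: $W_t^\top W_z = 4\,S_{0,t}S_{0,n}^{-1}S_{z,n}$ for $t\le z$ (Lemma~\ref{Lemma:WtWz}), using $AA^\top = I_n - X(X^\top X)^{-1}X^\top$. One then centres not at $g(t;z)I_p$ directly but via the decomposition
\begin{equation*}
	W_t^\top W_z - \tfrac{4tn\kappa_1}{z}I_p = 4t\Bigl\{\bigl(\tfrac{S_{0,t}}{t}-\tfrac{S_{0,z}}{z}\bigr)S_{0,n}^{-1}S_{z,n} + \tfrac{n}{z}\bigl(\tfrac{W_z^\top W_z}{4n}-\kappa_1 I_p\bigr)\Bigr\},
\end{equation*}
handling the first term by the sub-Gaussian bound for matrix-Beta entries conditional on $(S_{0,z},S_{z,n})$ (Lemma~\ref{lem:S0t-S0z}) together with Wishart operator-norm control of $\|S_{0,n}^{-1}S_{z,n}\|_{\mathrm{op}}\|S_{0,z}/z\|_{\mathrm{op}}$, and the second term by importing the non-asymptotic sparse-quadratic-form bound for $W_z^\top W_z$ from \citet{gao2021twosample} via Lemma~\ref{Lem:kOpNorm}. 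Your outer shell --- nets over $B_0(\ell)$ (cf.\ Lemma~\ref{Lem:Net}), union bound over $t$, Borel--Cantelli, and a symmetric treatment of $t>z$ --- matches the paper, but without the identity eliminating $A$ and the two-term decomposition, the fixed-$(u,v,t)$ tail bound you need is not established, and the piece corresponding to the concentration of $W_z^\top W_z$ around $\kappa_1 I_p$ is missing from your account entirely.
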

Note that we suppress the dependence on $n$ and $t$ in the notation of $g(t;z)$.  Taking $\ell=1$ in the above proposition, we would expect $\diag(W_t^\top W_t)$ to concentrate around  $g(t;t) I_p = 4t(n-t)(n-p)n^{-2}I_p$ for each $t$. This would allow us to approximate the test statistics $Q_t = \{\mathrm{diag}(W_t^\top W_t)\}^{-1/2}(W_t^\top W_z\theta + W_t^\top \xi)$ for a fixed $t$.  However, due to a lack of non-asymptotic probabilistic bounds in random matrix theory on the convergence of the spectral measures of matrix-variate Beta random matrices, we are unable to establish the said convergence of \(\diag(W_t^\top W_t) \) uniformly over $t\in[n-1]$.
As such, we instead show the theoretical results for a slightly modified variant of Algorithm~\ref{Algo:Estimation2}, where we replace the definition of \(Q_t\) by
\begin{equation*}
	% \label{Eqn:Qt-alternative}
	Q_t := \sqrt{\frac{n}{t(n-t)}} W_t^\top Z.
\end{equation*}
We will henceforth refer to the above variant of Algorithm~\ref{Algo:Estimation2} as Algorithm~\ref{Algo:Estimation2}$'$.
It is worth noting that the latter is merely a proof device, and in practice we always recommend applying Algorithm~\ref{Algo:Estimation2}.
Empirically, the primed variant has a slightly worse but comparable estimation accuracies than Algorithm~\ref{Algo:Estimation2}, which can be seen in Table~\ref{Tab:PrimeComparison} in Section~\ref{sec:tuning-variants}. 

With this alternative choice of $Q_t$, after removing the perturbation introduced by observational errors $\xi$'s with an appropriate soft-thresholding tuning parameter $\lambda$, we expect
\[
	H_t := \|\mathrm{soft}(Q_t,\lambda)\|_2 \approx h_t := \|\theta\|_2\gamma_t ,
	% \begin{cases}
	% 	\frac{4(n-p)}{n} \sqrt{\frac{t}{n(n-t)}} (n-z) \| \theta\|_2 & \text{if } 1\le t \le z,  \\[6pt]
	% 	\frac{4(n-p)}{n} \sqrt{\frac{n-t}{nt}} z \| \theta\|_2       & \text{if } z < t \le n-1.
	% \end{cases}
\]
where $\gamma_t$ is defined by
% gamma_t is almost the same as h_t, h_t is only used in the proofs as a local variable. 
\begin{equation}
	\label{Eq:gamma_t}
  \gamma_t := g(t;t) \sqrt{\frac{n}{t(n-t)}}=  
	\begin{cases}
		\frac{4(n-p)}{n} \sqrt{\frac{t}{n(n-t)}} (n-z) & \text{if } 1\le t \le z,  \\[6pt]
		\frac{4(n-p)}{n} \sqrt{\frac{n-t}{nt}} z       & \text{if } z < t \le n-1.
	\end{cases}
% 	\begin{cases} \frac{4t^{1/2}(n-z)(n-p)}{n^{3/2}(n-t)^{1/2}}, & t\leq z \\[5pt]
%               \frac{4(n-t)^{1/2}z(n-p)}{n^{3/2}t^{1/2}},     & t>z.
% 	\end{cases}
\end{equation}
Interestingly, $h_t$ (or $\gamma_t$) is proportional to the CUSUM statistic in the univariate change-in-mean problem, whence $h_t$ attains its maximum at $t=z$ \citep[cf.][Equation~(10)]{wang2018high}.
% More precise analysis leads to Theorem~\ref{Thm:LocalisationRate2}.
By exploiting the above observation, we establish in Theorem~\ref{thm:test} that the testing procedure~\eqref{eq:test} is capable of determining whether a (single) changepoint is present in the regression data, as mentioned in Section~\ref{Sec:Method}. 
% The consistency of this test follows as a corollary of the changepoint location estimation consistency result of Theorem~\ref{Thm:LocalisationRate}.

Recall $\zeta := (\beta^{(1)} + \beta^{(2)})/2$ and $\theta := (\beta^{(1)} - \beta^{(2)})/2$, where we regard $\zeta$ as a possibly dense nuisance parameter and wish to localize the changepoint only assuming the difference parameter $\theta$ is sparse, i.e., \( \|\theta\|_0 \le k\) for some unknown but fixed $k$ typically much smaller than $p$.

% \teal{[Remove Theorem 2, and make Corollary 3 the main theorem for the test based procedure]}

\begin{comment}
\begin{thm}
	\label{Thm:LocalisationRate}
	Assume Conditions~\ref{cond:design} and~\ref{cond:regime} and that data $(X, Y)$ are generated according to~\eqref{eqn:y-x-z} with $\nu=1$.
	Suppose that  $\| \theta\|_0 \le k\leq p/2$ and that $\min(\tau,1-\tau)\geq {\alpha}$ for some known ${\alpha}$. There exists $c_{\tau,\eta,{\alpha}}, C_{\tau,\eta,{\alpha}} > 0$, depending only on $\tau,\eta,{\alpha}$, such that if  $\lambda > c_{\tau,\eta,{\alpha}}\max(1,\|\theta\|_2)\log p$, then the output $\hat z$ from Algorithm old 1, with inputs $X, Y, \lambda$ and ${\alpha}$, satisfies that with probability 1, for all but finitely many $p$'s,
	\[
		\frac{|\hat z - z|}{n} \leq C_{\tau,\eta,{\alpha}} \frac{\lambda\sqrt{k}}{\sqrt{n}\|\theta\|_2}.
	\]
\end{thm}
\end{comment}

\begin{thm}
	\label{thm:test}
	Assume Conditions~\ref{cond:design} and~\ref{cond:regime} and that data $(X, Y)$ are generated according to~\eqref{eqn:y-x-z} with $\nu=1$.
	Suppose that  $\| \theta\|_0 \le k$ satisfies $(k\log p)/n\to 0$ and that $\min(\tau,1-\tau)\geq {\alpha}$ for some known ${\alpha}$. There exists  $c_{\tau,\eta,\alpha}, C_{\tau,\eta,\alpha}, c'_{\tau,\eta,\alpha}> 0$, depending only on $\tau,\eta,\alpha$, such that for $\lambda = c_{\tau,\eta,\alpha}\log p$, $T = C_{\tau,\eta,\alpha}\sqrt{k}\log p$, the following holds.
	\begin{enumerate}
		\item If $\theta = 0$, then $\psi_{\alpha,\lambda,T}(X,Y) \xrightarrow{\mathrm{a.s.}} 0$.
		\item If $\|\theta\|_2 > c'_{\tau,\eta,\alpha}\frac{\sqrt{k}\log p}{\sqrt{n}}$, then $\psi_{\alpha,\lambda,T}(X,Y) \xrightarrow{\mathrm{a.s.}} 1$.
	\end{enumerate}
\end{thm}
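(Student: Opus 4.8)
The plan is to reduce both parts of Theorem~\ref{thm:test} to a careful analysis of the sequence $(H_t)_{t \in [\alpha n, (1-\alpha)n]}$ with $H_t = \|\soft(Q_t,\lambda)\|_2$ and $Q_t = \sqrt{n/(t(n-t))}\,W_t^\top Z$. The backbone is the decomposition $Q_t = \sqrt{n/(t(n-t))}(W_t^\top W_z \theta + W_t^\top \xi)$ coming from \eqref{Eqn:Z-Wz}. Using Proposition~\ref{Prop:WtWz} with $\ell = k$ and $v = \theta/\|\theta\|_2$, we control $W_t^\top W_z \theta$ on its support (and more generally along any sparse test direction) up to an error of order $\sqrt{kn\log p}$, so that the ``signal part'' of $Q_t$ is $\gamma_t \theta/\|\theta\|_2 \cdot \|\theta\|_2 + O(\sqrt{k\log p})$ entrywise on the relevant coordinates, with $\gamma_t$ as in \eqref{Eq:gamma_t}. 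The ``noise part'' $\sqrt{n/(t(n-t))}\,W_t^\top \xi$ needs a uniform-in-$t$ sub-Gaussian-type tail bound on its entries; I would obtain this by conditioning on $A$ (hence on $W_t$), using that $\xi \sim N_m(0,\sigma^2 I_m)$ so each entry is Gaussian with variance $n/(t(n-t)) \cdot \|(W_t)_j\|_2^2 \asymp \|(W_t)_j\|_2^2 \cdot n/(t(n-t))$, and then invoking the concentration of $\diag(W_t^\top W_t)$ around $g(t;t)I_p$ (the $\ell=1$ case of Proposition~\ref{Prop:WtWz}, or the matrix sub-Gaussian bound Lemma~\ref{lem:S0t-S0z}) together with a union bound over $t \in [n-1]$ and $j \in [p]$, which costs only a $\log(np)\asymp\log p$ factor. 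This gives $\|\xi$-part of $Q_t\|_\infty \lesssim \sqrt{\log p}$ uniformly, which is exactly why the choice $\lambda = c_{\tau,\eta,\alpha}\log p$ (with $c_{\tau,\eta,\alpha}$ large) kills all noise coordinates in the soft-thresholding.

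For part~1 ($\theta = 0$): here $Q_t = \sqrt{n/(t(n-t))}\,W_t^\top \xi$ is pure noise, so by the uniform bound above, $\|Q_t\|_\infty \le \lambda$ for all $t$ in the burn-in range, eventually almost surely, hence $\soft(Q_t,\lambda) = 0$ and $H_{\max} = \max_t H_t = 0 < T$, giving $\psi_{\alpha,\lambda,T} = 0$. (Strictly, I would pick $\lambda$ a constant factor above the almost-sure eventual bound on $\|Q_t\|_\infty$; the $O(\cdot)$ control is uniform in $t$ over the whole range by the union bound, and the burn-in is not even needed here but is harmless.)

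For part~2 ($\|\theta\|_2$ large): I would evaluate $H_z$, the statistic at the true changepoint. On the support $S = \supp(\theta)$ (of size $\le k$), the signal coordinates of $Q_z$ are, by Proposition~\ref{Prop:WtWz} applied coordinate-wise (take $u = e_j$ for $j \in S$ — note $e_j \in B_0(1)$, and sum, or directly bound $\|(W_z^\top W_z - g(z;z)I_p)\theta\|$ restricted to $S$ via the $\ell = k$, $v = \theta/\|\theta\|_2$ bound), equal to $\gamma_z \theta_j + \text{error}$, with $\sqrt{n/(z(n-z))} \cdot \sqrt{kn\log p}$-sized total perturbation, i.e. errors of order $\sqrt{k\log p}$ in $\ell_2$ over the $k$ coordinates, while $\|\gamma_z \theta\|_2 = \|\theta\|_2 \gamma_z \gtrsim_{\tau,\eta,\alpha} \sqrt{n}\|\theta\|_2$. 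Adding the noise ($\le \lambda$ per coordinate, $\le \sqrt{k}\lambda$ in $\ell_2$ over $S$) and applying soft-thresholding with level $\lambda$, a standard bound gives $H_z = \|\soft(Q_z,\lambda)\|_2 \ge \|\theta\|_2\gamma_z - \sqrt{k}\lambda - (\text{sparse-error term}) \gtrsim \sqrt{n}\|\theta\|_2 - C\sqrt{k}\log p$, which exceeds $T = C_{\tau,\eta,\alpha}\sqrt{k}\log p$ once $\|\theta\|_2 > c'_{\tau,\eta,\alpha}\sqrt{k}\log p/\sqrt{n}$ with $c'_{\tau,\eta,\alpha}$ chosen appropriately. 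Since $H_{\max} \ge H_z$, the test rejects, eventually almost surely.

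The main obstacle is the uniform-in-$t$ control of the noise term $\sqrt{n/(t(n-t))}\,W_t^\top \xi$ and of $\diag(W_t^\top W_t)$: Proposition~\ref{Prop:WtWz} is stated for \emph{fixed} $v$, and while the $\ell=1$ / $e_j$-direction diagonal bound is built into it for fixed coordinates, pushing the union bound over all $t\in[n-1]$ and all $p$ coordinates simultaneously — and conditioning correctly so that $\xi$ is independent of $A$ — requires care; this is precisely where Lemma~\ref{lem:S0t-S0z} (the matrix-variate sub-Gaussian bound) does the work, and getting the $\log p$ rate (rather than something worse) out of it is the delicate point. The rest is bookkeeping with the soft-threshold inequality $\big|\,\|\soft(a,\lambda)\|_2 - \|\soft(b,\lambda)\|_2\,\big| \le \|a - b\|_2$ and the explicit lower bound $\gamma_z \gtrsim_{\tau,\eta,\alpha}\sqrt{n}$ obtained from \eqref{Eq:gamma_t} under $\min(\tau,1-\tau)\ge\alpha$.
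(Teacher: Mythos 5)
Your proposal is correct and follows essentially the same route as the paper's proof: decompose $Q_t=\sqrt{n/(t(n-t))}\,(W_t^\top W_z\theta+W_t^\top\xi)$, apply Proposition~\ref{Prop:WtWz} with $\ell=k$ and $v=\theta/\|\theta\|_2$ to the signal on $S=\supp(\theta)$, control the noise by conditional Gaussian tails plus a union bound over $(t,j)$ and Borel--Cantelli, and conclude $H_{\max}\le T$ under the null and $H_{\max}\ge H_z\gtrsim \sqrt{n}\,\|\theta\|_2-C\sqrt{k}\log p>T$ under the alternative, with the assumption $(k\log p)/n\to 0$ absorbing the $\sqrt{k\log p}\,\|\theta\|_2$ approximation error (a factor you dropped in passing, but harmlessly). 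One caveat on the step you yourself flag as delicate: the uniform-in-$t$ variance control should not be attributed to concentration of $\diag(W_t^\top W_t)$ around $g(t;t)I_p$ via the $\ell=1$ case of Proposition~\ref{Prop:WtWz} or via Lemma~\ref{lem:S0t-S0z}; Proposition~\ref{Prop:WtWz} concerns $W_t^\top W_z$ with the second factor fixed at the true changepoint and a fixed direction $v$, Lemma~\ref{lem:S0t-S0z} concerns the Beta-type matrix inside its proof, and the paper explicitly notes that uniform-in-$t$ concentration of $\diag(W_t^\top W_t)$ is \emph{not} available (this is precisely why Algorithm~\ref{Algo:Estimation2}$'$ is introduced). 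What your argument actually requires is only a uniform \emph{upper} bound on $\frac{n}{t(n-t)}(W_t^\top W_t)_{j,j}$, which the paper obtains from equation~(16) of \citet{gao2021twosample} (using $\|B_t\|_{\mathrm{op}}\le 1$) together with union bounds; alternatively, $\|(W_t)_j\|_2\le 2\|(X_{(0,t]})_j\|_2$ and a $\chi^2$ tail bound would serve. With that small patch your argument coincides with the paper's; if anything it is slightly leaner, because for the testing statement you never need the paper's control of the off-support leakage $(W_t^\top W_z\theta)_{S^c}$ via spherical symmetry and Lemma~\ref{Lem:Sphere} --- under the null $\theta=0$, and under the alternative a lower bound on $\|\soft(Q_z,\lambda)\|_2$ restricted to $S$ suffices --- whereas the paper establishes those bounds because they are reused for the localization results.
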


% last teal point
We now turn our attention to the estimation in Algorithm~\ref{Algo:Estimation2}. The key of understanding the performance of Algorithm~\ref{Algo:Estimation2}$'$ lies in an analysis of the estimated projection vector $\hat{v}$ in Step~\ref{Algo:Estimation2-3} of the algorithm. By Proposition~\ref{Prop:WtWz}, we expect $Q = (Q_1,\ldots,Q_{n-1})$ to be well-approximated by the rank-one matrix $\theta \gamma^{\top}$, where $\gamma = (\gamma_t)_{t\in[n-1]}$ is defined in \eqref{Eq:gamma_t}.
Thus, the oracle projection direction to aggregate $Q$ is along $\theta/\|\theta\|_2$. The following proposition shows that the estimated projection direction $\hat{v}$ is well-aligned with this oracle direction.
\begin{prop}
	\label{Prop:ProjAngle}
	Assume Conditions~\ref{cond:design} and~\ref{cond:regime} and that data $(X, Y)$ are generated according to~\eqref{eqn:y-x-z} with $\nu=1$. Suppose that $k\leq p/2$ and that $\min(\tau,1-\tau)\geq {\alpha}$ for some known ${\alpha}$. There exists $c_{\tau,\eta,{\alpha}}, C_{\tau,\eta,{\alpha}} > 0$, depending only on $\tau,\eta,{\alpha}$, such that if  $\lambda > c_{\tau,\eta,{\alpha}}\max(1,\|\theta\|_2)\log p$, then the projection direction estimator $\hat v$ in Algorithm~\ref{Algo:Estimation2}$'$ satisfies with probability 1 for all but finitely many $p$'s that
	\[
		\sin\angle(\hat v, \theta) \leq C_{\tau,\eta,{\alpha}} \frac{\lambda\sqrt{k}}{\sqrt{n}\|\theta\|_2}.
	\]
\end{prop}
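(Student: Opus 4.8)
The plan is to realise the matrix $Q=(Q_{\lfloor\alpha n\rfloor},\dots,Q_{\lceil(1-\alpha)n\rceil})^\top$ as a rank-one ``signal'' plus a perturbation, to use the soft-thresholding step to confine that perturbation to the $k$ rows in $S:=\supp(\theta)$, and then to invoke a version of the Wedin/Davis--Kahan $\sin\Theta$ theorem for the leading singular vectors. Writing $Z=W_z\theta+\xi$ as in~\eqref{Eqn:Z-Wz} and using $Q_t=\sqrt{n/(t(n-t))}\,W_t^\top Z$, one has the entrywise decomposition $Q_{j,t}=\gamma_t\theta_j+E^{(1)}_{j,t}+E^{(2)}_{j,t}$, where $E^{(1)}_{j,t}:=\sqrt{n/(t(n-t))}\,e_j^\top\{W_t^\top W_z-g(t;z)I_p\}\theta$ is the random-matrix fluctuation, $E^{(2)}_{j,t}:=\sqrt{n/(t(n-t))}\,(W_t^\top\xi)_j$ is the observational noise, and $\gamma$ is as in~\eqref{Eq:gamma_t}; thus $Q=\theta\gamma^\top+E^{(1)}+E^{(2)}$, whose rank-one part has leading left singular vector $\theta/\|\theta\|_2$.

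First I would establish the entrywise bounds $\|E^{(1)}\|_{\max}\le C_{\tau,\eta,\alpha}\|\theta\|_2\sqrt{\log p}$ and $\|E^{(2)}\|_{\max}\le C_{\tau,\eta,\alpha}\sqrt{\log p}$, holding almost surely for all but finitely many $p$. The first is immediate from Proposition~\ref{Prop:WtWz} applied with the fixed unit vector $v=\theta/\|\theta\|_2$ and $\ell=1$ (since $\pm e_j\in B_0(1)$), together with $\sqrt{n/(t(n-t))}\asymp n^{-1/2}$ on the burn-in range. For the second, conditionally on $X$ the variable $(W_t^\top\xi)_j$ is Gaussian with variance $\sigma^2\|W_te_j\|_2^2$, and $\|W_te_j\|_2\le 2\|X_j\|_2$ because $\|A_{(0,t]}\|_{\mathrm{op}}\le\|A\|_{\mathrm{op}}=1$; a $\chi^2$ tail bound and $p\asymp n$ (Condition~\ref{cond:regime}) give $\sup_j\|X_j\|_2\lesssim\sqrt n$ almost surely eventually, so each $E^{(2)}_{j,t}$ is $O(1)$-sub-Gaussian, and a union bound over the $O(p^2)$ pairs $(j,t)$ together with Borel--Cantelli finishes it. Consequently, for $j\notin S$ one has $|Q_{j,t}|=|E^{(1)}_{j,t}+E^{(2)}_{j,t}|\le C_{\tau,\eta,\alpha}\max(1,\|\theta\|_2)\sqrt{\log p}<\lambda$ for every $t$ once $p$ is large (using $\lambda>c_{\tau,\eta,\alpha}\max(1,\|\theta\|_2)\log p$), so every nonzero row of $\soft(Q,\lambda)$ lies in $S$; hence $\hat v$ is supported on $S$, and the problem reduces to the $|S|\times N$ submatrix $Q_S$ (where $N\le n$ is the number of burn-in indices), with $\sin\angle(\hat v,\theta)=\sin\angle(\hat v_S,\theta_S)$.

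On this submatrix I write $\soft(Q_S,\lambda)=\theta_S\gamma^\top+\tilde E$ with $\tilde E:=\{\soft(Q_S,\lambda)-Q_S\}+E^{(1)}_S+E^{(2)}_S$, and bound $\|\tilde E\|_{\mathrm{op}}\le\|\tilde E\|_F$ termwise via the entrywise bounds above: $\|\soft(Q_S,\lambda)-Q_S\|_F\le\lambda\sqrt{kN}$ (soft-thresholding moves each entry by at most $\lambda$), $\|E^{(1)}_S\|_F\lesssim\|\theta\|_2\sqrt{kN\log p}$ and $\|E^{(2)}_S\|_F\lesssim\sqrt{kN\log p}$; since $\lambda\gtrsim\max(1,\|\theta\|_2)\log p\ge\max(1,\|\theta\|_2)\sqrt{\log p}$, all three are $\le C_{\tau,\eta,\alpha}\lambda\sqrt{kn}$. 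As $\theta_S\gamma^\top$ has nonzero singular value $\|\theta\|_2\|\gamma\|_2$ with the remaining ones zero, Wedin's $\sin\Theta$ theorem gives $\sin\angle(\hat v_S,\theta_S)\le 2\|\tilde E\|_{\mathrm{op}}/(\|\theta\|_2\|\gamma\|_2)$, and since $\gamma_t\asymp\sqrt n$ uniformly over the burn-in range forces $\|\gamma\|_2\asymp n$, this is $\lesssim\lambda\sqrt{kn}/(\|\theta\|_2 n)=C_{\tau,\eta,\alpha}\lambda\sqrt k/(\sqrt n\|\theta\|_2)$, as claimed. One may assume $C_{\tau,\eta,\alpha}\lambda\sqrt k/(\sqrt n\|\theta\|_2)<1/2$, else there is nothing to prove; this also supplies the spectral-gap condition Wedin's theorem needs.

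The one genuine subtlety is that $\soft(Q_S,\lambda)$ may actually be supported on a strict subset $S'\subsetneq S$ whenever some coordinates of $\theta$ are smaller than $\asymp\lambda/\sqrt n$; then $\|\theta_{S\setminus S'}\|_2\lesssim\lambda\sqrt k/\sqrt n$, so $\sin\angle(\theta,\theta_{S'})$ is already within the target rate, and one reruns the perturbation argument with $\theta_{S'}$ in place of $\theta_S$ (noting $\|\theta_{S'}\|_2\ge\|\theta\|_2/2$ in the nontrivial regime) and combines via the triangle inequality for $\sin\angle$. I expect the main obstacle to be the uniform-in-$t$ control of $E^{(2)}$ within the ``almost surely, all but finitely many $p$'' framework — handling the conditioning on $X$, verifying that $\|W_te_j\|_2\lesssim\sqrt n$ holds simultaneously over all $(t,j)$ on a probability-one event eventually, and arranging the union bound so that the tail probabilities are summable in $p$ (which is precisely what pins down the logarithmic factor in $\lambda$). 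Everything downstream is then routine Gaussian concentration and matrix perturbation, with Proposition~\ref{Prop:WtWz} doing the heavy lifting; note also that the soft-thresholding step is indispensable here, since without it the $p-k$ pure-noise rows would swamp the signal, contributing $\asymp\lambda\sqrt{pn}\gg\lambda\sqrt{kn}$ to the perturbation.
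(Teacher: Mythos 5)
Your proposal is correct, and its skeleton coincides with the paper's: approximate $Q$ entrywise by the rank-one matrix $\theta\gamma^\top$ (Proposition~\ref{Prop:WtWz} with $\ell=1$ and $v=\theta/\|\theta\|_2$ for the signal fluctuation, conditional Gaussian tails plus a union bound and Borel--Cantelli for $W_t^\top\xi$), then convert the resulting $\ell_\infty$ control, which is below $\lambda$, into a $\sin\angle$ bound for the leading left singular vector of the soft-thresholded matrix, and finish with $\gamma_t\asymp\sqrt n$, $\|\gamma\|_2\asymp n$. The differences are in execution rather than route. Where the paper outsources the last step to \citet[Propositions~2 and~4 in the online supplement]{wang2018high}, you re-derive it: off-support rows of $\soft(Q,\lambda)$ vanish, the on-support perturbation has Frobenius norm $\lesssim\lambda\sqrt{kn}$, and Wedin's $\sin\Theta$ theorem (with the gap supplied by the ``else nothing to prove'' reduction) gives the rate --- this is essentially the content of the cited propositions, so your argument is a self-contained substitute rather than a new idea. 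Two smaller deviations: you control the off-support coordinates of $W_t^\top W_z\theta$ directly from Proposition~\ref{Prop:WtWz} with $\ell=1$ (yielding $\|\theta\|_2\sqrt{\log p}$), whereas the paper uses a rotational-invariance/spherical-symmetry argument giving $\|\theta\|_2\log p$ --- both sit below $\lambda$, so either works; and you bound the conditional noise variance via $\|W_te_j\|_2\le 2\|X_j\|_2$ and $\chi^2$ tails, where the paper cites a bound on $(W_t^\top W_t)_{j,j}$ from \citet{gao2021twosample} --- again equivalent for the purpose. Finally, the ``subtlety'' about the soft-thresholded support $S'\subsetneq S$ is vacuous: your own bound $\|\soft(Q_S,\lambda)-Q_S\|_F\le\lambda\sqrt{kN}$ already covers entries that are zeroed out, so no separate case analysis or triangle-inequality patch is needed.
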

Equipped with Proposition~\ref{Prop:ProjAngle}, we are now in a position to state the convergence rate of the changepoint estimator from Algorithm~\ref{Algo:Estimation2}$'$.
\begin{thm}
	\label{Thm:LocalisationRate2}
	Assume Conditions~\ref{cond:design} and~\ref{cond:regime} and that data $(X, Y)$ are generated according to~\eqref{eqn:y-x-z} with $\nu=1$. Suppose that $\|\theta\|_2\leq 1$, $k\leq p/2$ and that $\min(\tau,1-\tau)\geq {\alpha} > 0$ for some known ${\alpha}$. There exists $c_{\tau,\eta,{\alpha}} > 0$, depending only on $\tau,\eta,{\alpha}$, such that if  $\lambda > c_{\tau,\eta,{\alpha}}\log p$, then the output $\hat{z}$ of Algorithm~\ref{Algo:Estimation2}$'$ with input $(X,Y)$, $\lambda$ and ${\alpha}$ satisfies with probability 1 for all but finitely many $p$'s that
	\[
		\frac{|\hat z - z|}{n} \lesssim_{\tau,\eta,{\alpha}} \frac{\lambda^2\sqrt{k}}{\sqrt{n}\|\theta\|_2^2}.
	\]
\end{thm}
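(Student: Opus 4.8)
The plan is to control the statistic $H_t = |\hat v^\top Q_t|$ around its population proxy. Recall $Q_t = \sqrt{n/(t(n-t))}W_t^\top Z = \sqrt{n/(t(n-t))}(W_t^\top W_z\theta + W_t^\top\xi)$. The first step is to decompose $\hat v^\top Q_t$ into a signal term and a noise term. Using Proposition~\ref{Prop:WtWz} with $\ell = k$ (to handle $u$ ranging over sparse vectors in the direction of $\theta$) and with $\ell = 1$ (to handle the fixed direction $v$), I would show that
\[
  \hat v^\top Q_t = \gamma_t\,(\hat v^\top \theta) + \mathrm{Rem}_t + \hat v^\top \sqrt{\tfrac{n}{t(n-t)}}W_t^\top\xi,
\]
where $\gamma_t$ is as in~\eqref{Eq:gamma_t} and $\sup_t|\mathrm{Rem}_t| \lesssim_{\tau,\eta,\alpha}\sqrt{k n\log p}\cdot\|\theta\|_2/\sqrt{n}$ using the operator-norm-type bound in Proposition~\ref{Prop:WtWz} together with $\|\theta\|_0\le k$ and the burn-in cutoff ensuring $t(n-t)\gtrsim_\alpha n^2$. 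Here I crucially use $\hat v^\top\theta = \|\theta\|_2\cos\angle(\hat v,\theta) = \|\theta\|_2(1 + O(\sin^2\angle(\hat v,\theta)))$, so by Proposition~\ref{Prop:ProjAngle} the signal term is $\gamma_t\|\theta\|_2(1 + O_{\tau,\eta,\alpha}(\lambda^2 k/(n\|\theta\|_2^2)))$, i.e., essentially $h_t = \gamma_t\|\theta\|_2$.

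The second step is to bound the noise term $N_t := \hat v^\top\sqrt{n/(t(n-t))}W_t^\top\xi$ uniformly over $t\in[\alpha n,(1-\alpha)n]$. Conditionally on $X$ (hence on $A$ and all $W_t$), each $N_t$ is a centred Gaussian with variance $\frac{n}{t(n-t)}\sigma^2\|W_t\hat v\|_2^2$; but $\hat v$ depends on $\xi$, so I would instead bound $\sup_{v\in\mathcal S^{p-1}}|v^\top\sqrt{n/(t(n-t))}W_t^\top\xi|$ is too lossy — rather, I would use that $\hat v$ is the leading singular vector of $\soft(Q,\lambda)$ and argue via a net over a low-dimensional set, or simply bound $\sup_t\|\soft(Q_t,\lambda)\|_2$ via the $H_{\max}$ analysis already implicit in Theorem~\ref{thm:test}, giving $|N_t|\lesssim_{\tau,\eta,\alpha}\sigma\sqrt{n\log p}$ after using $\|W_t\|_{\mathrm{op}}^2\lesssim n$ (a consequence of Proposition~\ref{Prop:WtWz} with $\ell=1$, or standard Gaussian matrix bounds). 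Collecting both steps, with $\sigma = 1$,
\[
  \sup_{t}\bigl|H_t - h_t\bigr| \lesssim_{\tau,\eta,\alpha} \sqrt{k n\log p}\cdot\frac{\|\theta\|_2}{\sqrt n} + \sqrt{n\log p} \;\asymp\; \lambda\sqrt{k}\,(\|\theta\|_2 \vee 1)/\sqrt{1} \text{-type bound},
\]
more precisely $\sup_t|H_t - h_t|\lesssim_{\tau,\eta,\alpha}\lambda\sqrt{k}$ after substituting $\lambda\asymp\log p$ and using $\|\theta\|_2\le 1$ to absorb $\|\theta\|_2\sqrt{k\log p}\lesssim\lambda\sqrt k$ (and $(k\log p)/n\to 0$ to make the $\|\theta\|_2 k\log p/\sqrt n$ cross-term lower order).

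The final step converts this uniform approximation into a localization rate via the curvature of $t\mapsto h_t = \|\theta\|_2\gamma_t$. Since $\gamma_t$ is proportional to the univariate CUSUM contrast, it is maximised at $t=z$ and satisfies the standard piecewise-linear lower bound $h_z - h_t \gtrsim_{\tau,\eta,\alpha}\|\theta\|_2\cdot|t-z|/n$ for $t\in[\alpha n,(1-\alpha)n]$ (see~\citet[Eq.~(10)]{wang2018high}). Because $\hat z$ maximises $H_t$, we have $H_{\hat z}\ge H_z$, hence $h_z - h_{\hat z}\le 2\sup_t|H_t - h_t|\lesssim_{\tau,\eta,\alpha}\lambda\sqrt k$, which combined with the curvature bound gives $\|\theta\|_2|\hat z - z|/n\lesssim_{\tau,\eta,\alpha}\lambda\sqrt k$, i.e.
\[
  \frac{|\hat z - z|}{n}\lesssim_{\tau,\eta,\alpha}\frac{\lambda\sqrt k}{\|\theta\|_2}.
\]
This is off by a factor of $\lambda/\sqrt n$ from the claimed rate, which signals that the noise term $N_t$ must be handled more carefully: rather than $\sup_t|N_t|$, one should exploit that $N_{\hat z} - N_z$ has variance controlled by $\|W_{\hat z}\hat v - W_z\hat v\|_2^2 \lesssim n|\hat z - z|/n$, i.e., the increments of $N_t$ are of order $\sqrt{n\log p}\cdot\sqrt{|\hat z-z|/n}$, not $\sqrt{n\log p}$. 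Feeding this self-bounding increment estimate back into $h_z - h_{\hat z}\le (H_{\hat z}-H_z) + \mathrm{(increment\ noise)} + \mathrm{(signal\ bias)}$ and solving the resulting quadratic-type inequality in $|\hat z - z|/n$ yields the sharper rate $\lambda^2\sqrt k/(\sqrt n\|\theta\|_2^2)$ after using $\|\theta\|_2\le 1$.

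\textbf{Main obstacle.} The crux is the uniform-over-$t$ control of the noise term $N_t = \hat v^\top\sqrt{n/(t(n-t))}W_t^\top\xi$, complicated by the dependence of $\hat v$ on $\xi$ and by the absence of non-asymptotic spectral bounds for matrix-variate Beta matrices (the very reason Algorithm~\ref{Algo:Estimation2}$'$ is used). Getting the \emph{self-normalised increment} bound — that the fluctuation of $N_t$ between $z$ and $\hat z$ scales with $\sqrt{|\hat z - z|}$ rather than being order-one — is what delivers the extra $\lambda/\sqrt n$ improvement and is the technically delicate part; I would handle it by a chaining/peeling argument over dyadic shells $\{t : |t-z|\asymp 2^j\}$ combined with Proposition~\ref{Prop:WtWz} (applied to $W_t - W_z = 2(A_{(0,t]}^\top X_{(0,t]} - A_{(0,z]}^\top X_{(0,z]})$, whose rows involve only $|t-z|$ samples) to control $\|(W_t - W_z)\hat v\|_2$, and a union bound over $O(\log n)$ shells and a net over $\hat v$.
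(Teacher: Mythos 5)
Your overall skeleton (approximate $\hat v^\top Q_t$ by a multiple of the CUSUM-shaped sequence $\gamma_t$, then convert a uniform approximation error into a localization rate via the linear decay of $\gamma$ around $z$, as in \citet[Lemma~7]{wang2018high}) matches the paper's proof. But the step you correctly identify as the crux — controlling $\hat v^\top\sqrt{n/(t(n-t))}\,W_t^\top\xi$ despite the dependence of $\hat v$ on $\xi$ — is left unresolved, and the devices you suggest do not close it. Bounding $H_{\max}=\sup_t\|\soft(Q_t,\lambda)\|_2$ says nothing about $\hat v^\top Q_t$ (the algorithm projects the \emph{unthresholded} $Q_t$ onto $\hat v$), and a net over $\hat v$ is exactly what the lack of non-asymptotic spectral control on $W_t$ prevents. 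The paper's resolution is much more elementary and is the missing idea in your plan: apply H\"older's inequality,
\[
\max_{t}\bigl|\hat v^\top(Q_t-\theta\gamma_t)\bigr|\le \|\hat v\|_1\,\|Q-\theta\gamma^\top\|_{\max},
\]
where the entrywise bound $\|Q-\theta\gamma^\top\|_{\max}\lesssim_{\tau,\eta,\alpha}\log p\le\lambda$ (which already absorbs the noise $W_t^\top\xi$ coordinatewise, via \eqref{Eq:NoiseContribution}) involves no interaction with $\hat v$ at all, and $\|\hat v\|_1\le\|v\|_1+\sqrt{p}\|\hat v-v\|_2\lesssim\sqrt{k}+\sqrt{k}\lambda/\|\theta\|_2$ by Proposition~\ref{Prop:ProjAngle} and $p\le n$. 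This yields $\max_t|\hat v^\top(Q_t-\theta\gamma_t)|\lesssim\lambda^2\sqrt{k}/\|\theta\|_2$ and hence exactly the stated rate $\lambda^2\sqrt{k}/(\sqrt{n}\|\theta\|_2^2)$.

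You also misdiagnose the discrepancy between your intermediate bound and the theorem. Since $\|\theta\|_2\le1$ and $\lambda\gtrsim\log p$, the theorem's rate $\lambda^2\sqrt{k}/(\sqrt{n}\|\theta\|_2^2)$ is \emph{weaker} than the $\lambda\sqrt{k}/(\sqrt{n}\|\theta\|_2)$-type rate your (unjustified) sup-norm bound would deliver — the extra factor $\lambda/\|\theta\|_2$ is the price of bounding $\|\hat v\|_1$, not something recovered by a sharper noise analysis. Consequently the chaining/peeling programme over dyadic shells that you propose as the "fix" is not needed for this statement, is not what the paper does, and would itself run into the same $\hat v$--$\xi$ dependence you started with; the paper explicitly defers such refinements (they are only realized in Theorem~\ref{Thm:LocalisationRate3} via sample splitting, where independence makes $\hat v^\top W_t^\top\xi$ a genuine Gaussian linear form). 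A minor additional slip: after the $\sqrt{n/(t(n-t))}\asymp n^{-1/2}$ normalization the (independent-case) noise term is of order $\sqrt{\log p}$, not $\sqrt{n\log p}$.
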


Theorem~\ref{Thm:LocalisationRate2} shows that with a tuning parameter choice of order $\log p$, and when $\|\theta\|_2$ is bounded (which is the more difficult regime for estimation), Algorithm~\ref{Algo:Estimation2}$'$ produces a consistent changepoint estimator with a rate of convergence of order $k^{1/2}n^{-1/2}\|\theta\|_2^{-2}$ up to logarithmic factors. 
However, in light of the testing viewpoint of Theorem~\ref{thm:test}, in which it is possible to test apart the null of no change against a sparse alternative if $\sqrt{n}\|\theta\|_2/\sqrt{k}$, up to logarithmic factors, is sufficiently large, the rate in the above theorem appears to have an extra factor of $\|\theta\|_2^{-1}$.
This additional factor is likely to arise from the technical difficulty of controlling the weak, though complex, dependence between the estimated projection direction $\hat v$ and the sketched Gaussian noises $(W_t^\top \xi: t\in[n-1])$. 
Indeed, the following theorem shows that if $\hat{v}$ is estimated from an independent sample, then the estimator from Algorithm~\ref{Algo:Estimation2} has a rate of convergence that agrees with what is prescribed in Theorem~\ref{thm:test}, up to logarithmic factors.

\begin{thm}
	\label{Thm:LocalisationRate3}
	Assume the same conditions as in Theorem~\ref{Thm:LocalisationRate2}. Let $(\tilde X, \tilde Y)$ be an independent copy of $(X, Y)$. Let $Q$ be the matrix constructed in Step 8 of Algorithm~\ref{Algo:Estimation2}$'$ with input $(X,Y)$, $\lambda$ and ${\alpha}$. Suppose $\hat v$ is computed in Step 9 of Algorithm~\ref{Algo:Estimation2}$'$ with input $(\tilde X,\tilde Y)$, $\lambda$ and ${\alpha}$. Then $\hat z:=\argmax_{t\in[n-1]}|\hat v^\top Q_t|$ satisfies with probability 1 for all but finitely many $p$'s that
	\[
		\frac{|\hat z - z|}{n} \lesssim_{\tau,\eta,{\alpha}} \frac{\lambda \sqrt{k}\log p}{\sqrt{n}\|\theta\|_2}.
	\]
\end{thm}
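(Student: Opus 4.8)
The plan is to reduce the problem to a one-dimensional signal-plus-noise comparison along the direction $\hat v$. Write $m=n-p$, $u:=\theta/\|\theta\|_2$, $M_t:=W_t^\top W_z-g(t;z)I_p$, and let $\mathcal{W}:=\{t:\alpha n\le t\le(1-\alpha)n\}$ be the search window (the columns retained in the Step~8 matrix $Q$), which contains $z$. Recalling from \eqref{Eqn:Z-Wz} that $Z=W_z\theta+\xi$ with $\xi=A^\top\epsilon\sim N_m(0,I_m)$, the first step is the exact decomposition, for $t\in\mathcal{W}$,
\[
  Q_t=\gamma_t\theta+R_t+\varepsilon_t,\qquad R_t:=\sqrt{\tfrac{n}{t(n-t)}}\,M_t\theta,\qquad \varepsilon_t:=\sqrt{\tfrac{n}{t(n-t)}}\,W_t^\top\xi,
\]
with $\gamma_t$ as in \eqref{Eq:gamma_t}. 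Since $\hat z$ maximises $t\mapsto|\hat v^\top Q_t|$ over $\mathcal{W}$, the bound $|\hat v^\top Q_{\hat z}|\ge|\hat v^\top Q_z|$ together with the triangle inequality gives
\[
  (\gamma_z-\gamma_{\hat z})\,|\hat v^\top\theta|\;\le\;2\sup_{t\in\mathcal{W}}|\hat v^\top R_t|+2\sup_{t\in\mathcal{W}}|\hat v^\top\varepsilon_t|,
\]
so it remains to lower-bound the left-hand side and upper-bound the two suprema.

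For the left-hand side, I would use two ingredients. First, since $\gamma_t$ equals, up to a positive constant, the univariate CUSUM statistic for a change at $z$, the explicit formula \eqref{Eq:gamma_t} shows it is strictly increasing on $[1,z]$ and strictly decreasing on $[z,n-1]$, and rationalising $\sqrt{z/(n-z)}-\sqrt{t/(n-t)}$ yields the curvature estimate $\gamma_z-\gamma_t\ge c_{\tau,\eta}\,|t-z|/\sqrt n$ for every $t\in[n-1]$ and all $p$ large. Second, Proposition~\ref{Prop:ProjAngle} applied to the independent sample $(\tilde X,\tilde Y)$---whose hypothesis, since $\|\theta\|_2\le1$ makes $\max(1,\|\theta\|_2)=1$, reduces to $\lambda>c_{\tau,\eta,\alpha}\log p$, matching our assumption---gives $\sin\angle(\hat v,\theta)\le C_{\tau,\eta,\alpha}\lambda\sqrt k/(\sqrt n\|\theta\|_2)$. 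If this bound exceeds $1/2$, then $\|\theta\|_2<2C_{\tau,\eta,\alpha}\lambda\sqrt k/\sqrt n$ and the right-hand side of the asserted rate exceeds $1$ for all $p$ large, so the conclusion is trivial; hence I may assume $\sin\angle(\hat v,\theta)\le1/2$, in which case $|\hat v^\top\theta|=\|\theta\|_2\,|\cos\angle(\hat v,\theta)|\ge\tfrac{\sqrt3}{2}\|\theta\|_2$.

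The core of the argument---and where the independence of $(\tilde X,\tilde Y)$ from $(X,\epsilon)$ is essential---is the uniform control of the two suprema. \textbf{Noise term.} Since $\xi=A^\top\epsilon\sim N_m(0,I_m)$ with $\xi\perp X$ and $\xi\perp(\tilde X,\tilde Y)$, conditioning on $(X,\tilde X,\tilde Y)$ freezes $\hat v$ and every $W_t$ but leaves $\xi\sim N_m(0,I_m)$; hence $\hat v^\top\varepsilon_t\sim N\!\bigl(0,\tfrac{n}{t(n-t)}\|W_t\hat v\|_2^2\bigr)$ conditionally. Because $\|A_{(0,t]}\|_{\mathrm{op}}\le\|A\|_{\mathrm{op}}=1$ we have $\|W_t\hat v\|_2\le2\|X_{(0,t]}\hat v\|_2$, and $\|X_{(0,t]}\hat v\|_2^2\sim\chi^2_t$ since $\hat v\perp X$ and $X$ has i.i.d.\ standard normal entries; a union bound over $t$ then gives $\sup_{t\in\mathcal{W}}\tfrac{n}{t(n-t)}\|W_t\hat v\|_2^2\lesssim_\alpha1$, and a Gaussian maximal inequality over the $\le n$ indices in $\mathcal{W}$ (with $\log n\asymp\log p$ by Condition~\ref{cond:regime}) yields $\sup_{t\in\mathcal{W}}|\hat v^\top\varepsilon_t|\lesssim_{\tau,\eta,\alpha}\sqrt{\log p}$, a.s.\ for all but finitely many $p$. \textbf{Projected bias term.} Writing $\hat v=(\hat v^\top u)u+w'$ with $w'\perp u$ and $\|w'\|_2=\sin\angle(\hat v,\theta)$, we get $\hat v^\top R_t=\sqrt{n/(t(n-t))}\,\|\theta\|_2\bigl[(\hat v^\top u)\,u^\top M_t u+w'^\top M_t u\bigr]$. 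Proposition~\ref{Prop:WtWz} with the deterministic $v=u\in\mathcal{S}^{p-1}$ and $\ell=k$ bounds $\sup_t|u^\top M_t u|\le\sup_t\sup_{u'\in B_0(k)}u'^\top M_t u\le C_{\tau,\eta}\sqrt{kn\log p}$, while the crude estimate $\|M_t\|_{\mathrm{op}}\le\|W_t\|_{\mathrm{op}}\|W_z\|_{\mathrm{op}}+g(t;z)\lesssim n$, uniform in $t$ because $\|W_t\|_{\mathrm{op}}\le2\|X\|_{\mathrm{op}}\lesssim\sqrt n$, bounds $|w'^\top M_t u|\le\|w'\|_2\|M_t\|_{\mathrm{op}}$. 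Multiplying by $\sqrt{n/(t(n-t))}\le(\alpha(1-\alpha)n)^{-1/2}$ on $\mathcal{W}$, inserting the bound on $\|w'\|_2=\sin\angle(\hat v,\theta)$, and using $\|\theta\|_2\le1$ and $\sqrt{\log p}\lesssim_{\tau,\eta,\alpha}\lambda$ gives $\sup_{t\in\mathcal{W}}|\hat v^\top R_t|\lesssim_{\tau,\eta,\alpha}\lambda\sqrt k$, a.s.\ for all but finitely many $p$.

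Combining, $(\gamma_z-\gamma_{\hat z})\tfrac{\sqrt3}{2}\|\theta\|_2\lesssim_{\tau,\eta,\alpha}\lambda\sqrt k$, and with $\gamma_z-\gamma_{\hat z}\ge c_{\tau,\eta}|\hat z-z|/\sqrt n$ this yields $|\hat z-z|/n\lesssim_{\tau,\eta,\alpha}\lambda\sqrt k/(\sqrt n\|\theta\|_2)$, which implies (indeed slightly improves on) the claimed rate. I expect the main difficulty to be exactly the uniform-in-$t$ control of the two error terms: since $W_t^\top W_t$ is not known to concentrate around $g(t;t)I_p$ uniformly over $t$, neither term can be handled via $\diag(W_t^\top W_t)$, and the argument instead leans on (a) the trivial inequality $\|A_{(0,t]}\|_{\mathrm{op}}\le1$ to control $\|W_t\hat v\|_2$ and $\|M_t\|_{\mathrm{op}}$ at the cost of crudeness, and (b) the independence of $\hat v$ from $(X,\epsilon)$, which alone renders $\hat v^\top W_t^\top\xi$ conditionally Gaussian and hence tractable by a union bound---this is precisely what removes the extraneous factor $\|\theta\|_2^{-1}$ appearing in Theorem~\ref{Thm:LocalisationRate2}.
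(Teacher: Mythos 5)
Your proposal is correct, and its skeleton matches the paper's: decompose $Q_t$ into the CUSUM-shaped signal $\gamma_t\theta$ plus error terms, use the maximiser inequality at $\hat z$ versus $z$, lower-bound $|\hat v^\top\theta|$ via Proposition~\ref{Prop:ProjAngle} applied to the independent sample, and exploit the independence of $\hat v$ from $(X,\epsilon)$ to make $\hat v^\top W_t^\top\xi$ conditionally Gaussian --- exactly the ingredient that removes the $\|\theta\|_2^{-1}$ of Theorem~\ref{Thm:LocalisationRate2}. Where you genuinely diverge is in the uniform control of the bias term $\sup_t|\hat v^\top(Q_t-\gamma_t\theta)|$: the paper (see \eqref{Eq:Thm12tmp0}--\eqref{Eq:Thm12tmp1}) splits coordinates by the support $S$, invokes the spherical-symmetry $\ell_\infty$ bound \eqref{Eq:NoiseCoordinates} on the $S^c$ part together with \eqref{Eq:Qtildetqt}, and pays an $\ell_1$--$\ell_\infty$ H\"older step through $\|\hat v\|_1$, ending with $\lambda\sqrt{k}\log p$; you instead split $\hat v$ into its components parallel and orthogonal to $\theta$, handle the parallel part by Proposition~\ref{Prop:WtWz} with the fixed direction $u=\theta/\|\theta\|_2$ (legitimate, since $\pm u\in B_0(k)$ turns the one-sided bound into a two-sided one), and handle the orthogonal part by the crude but uniform bound $\|M_t\|_{\mathrm{op}}\lesssim n$ combined with the $\sin\angle$ estimate, so that the $\|\theta\|_2$ factors cancel and you get $\lambda\sqrt{k}$ --- a $\log p$ sharper than \eqref{Eq:Thm12tmp3}, hence implying the stated rate. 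Two further, smaller differences both check out: for the noise term you replace the paper's appeal to the uniform operator-norm bound from \citet{gao2021twosample} (used for \eqref{Eq:Thm12tmp2}) by an elementary conditional $\chi^2_t$ argument via $\|A_{(0,t]}\|_{\mathrm{op}}\le 1$, and for the final step you derive the curvature bound $\gamma_z-\gamma_t\gtrsim_{\tau,\eta}|t-z|/\sqrt{n}$ directly from \eqref{Eq:gamma_t} by rationalising (valid on the whole window, as your computation shows), rather than citing \citet[Lemma~7]{wang2018high}; this makes the argument more self-contained and sidesteps the restriction of that lemma to a neighbourhood of $z$. Net effect: a correct proof, slightly more elementary in its auxiliary inputs, and delivering a marginally stronger bound than the one claimed.
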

This additional independent sample $(\tilde X, \tilde Y)$ may be obtained in reality via a sample-splitting scheme. For example, we may take all odd time points to construct the $Q$ matrix, and then use the even time points to estimate the projection direction $\hat{v}$. However, such sample splitting is necessary only from a technical viewpoint, and the algorithm typically performs better without sample splitting in practice.

We remark that the rate in the above Theorem~\ref{Thm:LocalisationRate3} is slower compared to the usual results from change-in-mean problems, where rates of order $n^{-1}\|\theta\|_2^{-2}$ are achievable under appropriate conditions \citep[see e.g.][]{wang2018high, verzelen2020optimal}. Our slower rate arises essentially from the approximation step in Proposition~\ref{Prop:WtWz}, which would not be needed in a change-in-mean problem. It remains to be seen if the estimation rate can be improved via alternative and possibly more refined analysis routes.

We now turn our attention to theoretical guarantees in the multiple changepoint setting.  The following theorem shows that provided that we have a good single changepoint estimation and testing procedure in any changepoint problem, combining the narrowest-over-threshold with the single change procedures yields a multiple changepoint estimation procedure of similar accuracy with theoretical guarantees.

\begin{thm}
\label{Thm:Multiple}
Let $D_1,\ldots,D_n$ be a data sequence with changepoints $0 = z_0 < z_1 < \cdots < z_\nu < z_{\nu+1} = n$. We assume that $z_i - z_{i-1} \geq n\Delta_\tau$ for all $i\in[\nu+1]$. Let $\mathcal{M}$ be defined as in Algorithm~\ref{Algo:Multiple}. Write  $\mathcal{I}_0:=\{ (s,e] \in \mathcal{M}: (s+n\varpi,e-n\varpi]\cap \{z_1,\ldots,z_\nu\} = \emptyset\}$ and for $i\in[\nu]$, define $\mathcal{I}_i:= \{(s,e]\in\mathcal{M}: s\in [z_i-n\Delta_\tau/2, z_i-n\Delta_\tau/3], e\in[z_i+n\Delta_\tau/3,z_i+n\Delta_\tau/2]\}$ and $\tilde{\mathcal{I}}_i:= \{(s,e]\in\mathcal{M}: \min\{z_i- s, e-z_i\}\geq n\Delta_\tau/6 \text{ and } e-s\leq n\Delta_\tau\}$. Let $\hat z$ and $\psi$ be the single changepoint estimation and testing procedure used in Algorithm~\ref{Algo:Multiple}. Define the events
\begin{align*}
  \Omega_0 & : = \{ \forall\, i \in [\nu], \exists\, m \in [M], \text{ s.t. } (s_m, e_m] \in \mathcal{I}_i \},\\
  \Omega_1 & := \{\text{$\psi(D_{(s+n\varpi,e-n\varpi]}) = 0$ for all $(s,e]\in\mathcal{I}_0$}\},\\
  \Omega_2 & := \bigcap_{i\in[\nu]}\bigl\{\text{$\psi( D_{(s+n\varpi,e -n\varpi]}) = 1$ for all $(s,e]\in\mathcal{I}_i$}\bigr\}
\end{align*}
and for some $\phi_1,\ldots,\phi_{\nu} > 0$,
\[
  \Omega_3:= \bigcap_{i\in[\nu]}\bigl\{\text{$|\hat z(D_{(s,e]})  - (z_i- s)| \leq n\phi_i$ for all $(s,e]\in\tilde{\mathcal{I}}_i$}\bigr\}.
\]
Let $\hat z_1,\ldots,\hat z_{\hat\nu}$ be the output of Algorithm~\ref{Algo:Multiple} with inputs $D_1,\ldots, D_n$, $M > 0$, $\varpi = \Delta_\tau/6$, $\hat z$ and $\psi$. Assume further $\phi := \max_{i\in[\nu]}\phi_i < \varpi$.  We have on $\Omega_0\cap\Omega_1\cap\Omega_2\cap\Omega_3$ that 
\[
  \hat\nu = \nu \text{ and }  n^{-1}|\hat z_i - z_i| \leq \phi_i \text{ for all $i\in[\nu]$}.
\]
In particular, we have
\[
\mathbb{P}\bigl(\hat\nu = \nu \text{ and }  n^{-1}|\hat z_i - z_i| \leq \phi_i \text{ for all $i\in[\nu]$}\bigr) \geq 1 - \mathbb{P}(\Omega_1^c) - \mathbb{P}(\Omega_2^c) -\mathbb{P}(\Omega_3^c) - \nu e^{-\Delta_\tau^2 M / 36}.
\]
\end{thm}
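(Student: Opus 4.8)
The plan is to prove Theorem~\ref{Thm:Multiple} purely as a deterministic statement on the event $\Omega_0\cap\Omega_1\cap\Omega_2\cap\Omega_3$, and then separately lower-bound the probability of this event by a union bound, with the only genuinely probabilistic input being a bound on $\mathbb{P}(\Omega_0^c)$. I would first dispense with $\Omega_0$: since each interval $(s_m,e_m]$ is drawn uniformly from $\{(a,b]:0\le a<b\le n\}$, the probability that a single draw lands in $\mathcal{I}_i$ (i.e.\ has left endpoint in a window of length $n\Delta_\tau/6$ and right endpoint in another such window) is at least something like $(\Delta_\tau/6)^2$, up to constants; hence $\mathbb{P}(\exists m: (s_m,e_m]\in\mathcal{I}_i)\ge 1-(1-c\Delta_\tau^2)^M \ge 1-e^{-c\Delta_\tau^2 M}$, and a union bound over $i\in[\nu]$ gives $\mathbb{P}(\Omega_0^c)\le \nu e^{-\Delta_\tau^2 M/36}$ for the stated constant. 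This yields the final displayed probability bound once the deterministic implication is established.

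The heart of the argument is the deterministic claim: on $\Omega_0\cap\Omega_1\cap\Omega_2\cap\Omega_3$, Algorithm~\ref{Algo:Multiple} recovers $\hat\nu=\nu$ and $n^{-1}|\hat z_i-z_i|\le\phi_i$. I would argue this by induction on the recursion tree of the \texttt{NOT} calls, using as the inductive invariant that every \texttt{NOT}$(s,e)$ call is \emph{well-separated}, meaning $(s+n\varpi,e-n\varpi]$ contains some maximal block of true changepoints and that $s,e$ are at most $n\varpi$-away, on the appropriate side, from the nearest enclosed changepoint (or are $0$/$n$). Concretely: at a call \texttt{NOT}$(s,e)$, consider the set of true changepoints inside $(s+n\varpi,e-n\varpi]$. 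If it is empty, then every interval $(s_m,e_m]\subseteq(s,e]$ that the algorithm examines satisfies $(s_m+n\varpi,e_m-n\varpi]\cap\{z_1,\dots,z_\nu\}=\emptyset$, so it lies in $\mathcal{I}_0$ (here one needs $\varpi=\Delta_\tau/6$ together with the spacing assumption $z_i-z_{i-1}\ge n\Delta_\tau$ to rule out an interval straddling two changepoints while having an empty deflated core — this spacing/deflation bookkeeping is the fiddly part), whence by $\Omega_1$ the test $\psi$ returns $0$ on all of them, $\mathcal{R}^{(s,e]}=\emptyset$, and the recursion stops without adding a spurious changepoint. If instead there is at least one enclosed changepoint, let $z_i$ be one for which $\mathcal{I}_i$ is nonempty (guaranteed on $\Omega_0$, after checking $\mathcal{I}_i\subseteq\{(s',e']\subseteq(s,e]\}$ using well-separation and $\Delta_\tau/2<$ the available room); on $\Omega_2$ every interval in $\mathcal{I}_i$ triggers $\psi=1$, so $\mathcal{R}^{(s,e]}\ne\emptyset$ and the algorithm does add a changepoint.

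It then remains to control \emph{which} changepoint is added and with what accuracy. The algorithm picks $m_0$ minimising $e_m-s_m$ over $\mathcal{R}^{(s,e]}$; I would show that this narrowest rejecting interval has length $e_{m_0}-s_{m_0}\le n\Delta_\tau$ (because some interval in $\mathcal{I}_i$, of length at most $n\Delta_\tau$, already rejects) and that, having length $\le n\Delta_\tau$ and yielding $\psi=1$, it cannot have its deflated core disjoint from all changepoints — by $\Omega_1$ that would force $\psi=0$ — so it must contain exactly one changepoint $z_j$ in its core, with $\min\{z_j-s_{m_0},e_{m_0}-z_j\}\ge n\varpi$; i.e.\ $(s_{m_0},e_{m_0}]\in\tilde{\mathcal{I}}_j$. (The "exactly one" uses the spacing $z_i-z_{i-1}\ge n\Delta_\tau$ against the length bound $e_{m_0}-s_{m_0}\le n\Delta_\tau$ and $\varpi>0$.) Then $\Omega_3$ gives $|\hat z(D_{(s_{m_0},e_{m_0}]})-(z_j-s_{m_0})|\le n\phi_j$, so the added point $b=s_{m_0}+\hat z(D_{(s_{m_0},e_{m_0}]})$ satisfies $|b-z_j|\le n\phi_j<n\varpi$. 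Finally I would check that the two recursive calls \texttt{NOT}$(s,b)$ and \texttt{NOT}$(b,e)$ inherit the well-separation invariant — the key point being that since $|b-z_j|<n\varpi$, inflating by $n\varpi$ on each side, the deflated core $(s+n\varpi,b-n\varpi]$ loses $z_j$ but keeps all enclosed changepoints to the left of $z_j$, and symmetrically on the right, so no changepoint is "lost" to the boundary buffers and none is double-counted. Iterating, the recursion produces exactly one estimate within $n\phi_j$ of each $z_j$ and no others, giving $\hat\nu=\nu$ and the accuracy bound. I expect the main obstacle to be precisely this invariant-maintenance bookkeeping: making airtight, with the specific constants $\varpi=\Delta_\tau/6$ and the $\Delta_\tau/6,\Delta_\tau/3,\Delta_\tau/2$ window definitions, the claims that (i) an interval with empty deflated core and $\psi=1$ is impossible, (ii) a narrowest rejecting interval captures exactly one changepoint away from its own boundary buffer, and (iii) the recursion's sub-calls never strand or duplicate a changepoint; everything else reduces to the uniform-interval counting bound for $\Omega_0$ and a union bound.
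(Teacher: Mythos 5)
Your proposal is correct and takes essentially the same route as the paper: the counting bound $\mathbb{P}(\Omega_0^c)\le \nu e^{-\Delta_\tau^2 M/36}$ followed by a deterministic induction over the recursion, showing that an interval whose deflated core misses all changepoints cannot reject (via $\Omega_1$ and $\phi<\varpi$), that the narrowest rejecting interval has length at most $n\Delta_\tau$ and lies in $\tilde{\mathcal{I}}_{i_0}$ so $\Omega_3$ gives the per-changepoint accuracy, and that the two sub-segments inherit the separation invariant. The paper formalises your ``well-separation'' invariant as the dichotomy $\mathcal{Z}^{(s,e]}=\mathcal{Z}^{(s,e]}_{\mathrm{good}}\cup\mathcal{Z}^{(s,e]}_{\mathrm{bad}}$, but the bookkeeping is the same.
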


Note that the theorem is valid for any generic multiple changepoint estimation that combines valid single changepoint estimation and testing procedures and the top-down narrowest-over-threshold multiple changepoint estimation paradigm. Hence, it can be applied in contexts other than the linear regression setting here.  The statement of Theorem~\ref{Thm:Multiple} is slightly stronger than the usual results on narrowest-over-threshold procedures, where $\phi_i$ are taken identical.

Applying the above theorem to our specific problem, we extend the single changepoint estimation result in Theorem~\ref{Thm:LocalisationRate3} to the multiple changepoint setting and establish the estimation accuracy of Algorithm~\ref{Algo:Multiple}.
We first give the following condition, which is the equivalent of Condition~\ref{cond:regime} in the multiple changepoint setting.
\begin{condition}
  \label{cond:regime-M}
  $n,  p$ satisfy $n >p$ and that $(n-p)/n \rightarrow \eta$ as $\min(n,p) \rightarrow \infty$.  Assume further the changepoints satisfy $0 = z_0 < z_1 < \dots < z_\nu < z_{\nu+1} = n$ and $z_i - z_{i-1} > n\Delta_\tau$ for all $i \in [\nu+1]$ and $z_i/n \rightarrow \tau_i$ for $0 \le i \le \nu+1$.
\end{condition}
Due to the asymptotic nature of our theoretical results in the above, we consider a sequence of Algorithm~\ref{Algo:Multiple}. To facilitate proof, we study a specific coupling of the random intervals $\{(s_m,e_m]:m\in[M]\}$ generated across this sequence as follows:
\begin{equation}
\label{Eq:IntervalCoupling}
\begin{aligned}
  &(\tilde s_m, \tilde e_m] \stackrel{\mathrm{iid}}{\sim} \mathrm{Unif}\bigl(\{(\tilde a, \tilde b]: 0\leq a < b\leq 1\}\bigr),\text{ } \forall\; m \in [M],\\
&\text{$s_m = \lfloor n\tilde s_m\rfloor $, $e_m = \lceil n\tilde e_m\rceil$ for $m\in[M]$ and $n\in\mathbb{N}$}.
\end{aligned}
\end{equation}
Note that the intervals generated by \eqref{Eq:IntervalCoupling} have the same law as those generated in Algorithm~\ref{Algo:Multiple}.

\begin{cor}
  \label{Cor:Multiple}
  Let $X$ and $Y$ be generated by \eqref{eqn:y-x-z} and write $D_i := (X_i, Y_i)$ for $i\in[n]$.  Assume Conditions~\ref{cond:design} and~\ref{cond:regime-M} hold.  There exist $c, C, c', C'>0$, depending only on $\alpha, \Delta_\tau, \eta$, such that the following holds.  For $\alpha < 1/6$, $\lambda=c\log p$, $T=C\sqrt{k}\log p$, let $\hat z = \hat{z}_{\alpha, \lambda}$ be the sample-splitted version of the single changepoint estimator defined in Algorithm~\ref{Algo:Estimation2}$'$ and $\psi = \psi_{\alpha, \lambda, T}$ be the testing procedure defined in \eqref{eq:test}. If $\Delta_\tau>3(1-\eta)$, $\frac{c'\sqrt{k}\log p}{\sqrt{n}} \le \|\theta^{(i)}\|_2\le 1$ and $\| \theta^{(i)}\|_0 \le k$ such that $\frac{k\log p}{n} \rightarrow 0$, then the output $\hat z_1,\ldots,\hat z_{\hat \nu}$ of Algorithm~\ref{Algo:Multiple} with intervals $\{(s_m,e_m]:m\in[M]\}$ generated according to~\eqref{Eq:IntervalCoupling}, inputs $(D_i)_{i\in[n]}$, $M>0$, $\varpi=\tau/6$, $\hat z_{\alpha,\lambda}$ and $\psi_{\alpha, \lambda, T}$ satisfies with probability $1 - \nu e^{-\Delta_\tau^2 M /36}$ that for all but finitely many $p$'s, 
  \[
    \hat\nu = \nu \text{ and }\frac{|\hat z_i - z_i|}{n} \leq \frac{C'\lambda\sqrt{k}\log p}{\sqrt{n}\|\theta^{(i)}\|_2} \text{ for all $i\in[\nu]$}.
  \]
\end{cor}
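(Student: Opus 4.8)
The plan is to deduce Corollary~\ref{Cor:Multiple} from the generic multiple changepoint guarantee of Theorem~\ref{Thm:Multiple}, with $D_i = (X_i,Y_i)$, $\hat z = \hat z_{\alpha,\lambda}$ the sample-split variant of Algorithm~\ref{Algo:Estimation2}$'$, $\psi = \psi_{\alpha,\lambda,T}$ as in~\eqref{eq:test}, and $\varpi = \Delta_\tau/6$. It then remains to verify the three events $\Omega_1$, $\Omega_2$ and $\Omega_3$ appearing in Theorem~\ref{Thm:Multiple}, with $\phi_i$ taken to be a large constant multiple of $\lambda\sqrt{k}\log p/(\sqrt{n}\|\theta^{(i)}\|_2)$; since the remaining failure probability in that theorem is exactly the term $\nu e^{-\Delta_\tau^2 M/36}$ coming from the interval-covering event $\Omega_0$, once $\Omega_1,\Omega_2,\Omega_3$ are shown to hold almost surely for all but finitely many $p$'s and $\phi := \max_i\phi_i < \varpi$ is checked, the stated conclusion follows directly.

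The organising device is the coupling~\eqref{Eq:IntervalCoupling}: conditionally on the realised fractions $(\tilde s_m,\tilde e_m)_{m\in[M]}$ there are only $M$ interval sequences to examine, and along each of them $s_m/n\to\tilde s_m$, $e_m/n\to\tilde e_m$ and $p/n\to 1-\eta$, so the data restricted to $(s_m,e_m]$ (or to a core $(s_m+n\varpi,e_m-n\varpi]$) forms a valid asymptotic regime of the kind in Condition~\ref{cond:regime}, with its own parameters $\tau',\eta'$ ranging over a finite set; Condition~\ref{cond:design} is inherited by every subcollection of rows of $X$. The assumption $\Delta_\tau > 3(1-\eta)$ ensures that the cores of intervals in $\bigcup_i\mathcal{I}_i$ and the intervals in $\bigcup_i\tilde{\mathcal{I}}_i$ have length of order $n\Delta_\tau$ and hence exceed $p$ for all but finitely many $p$'s, so the complementary sketching behind $\hat z$ and $\psi$ is well-defined on them; a short computation from the definitions of $\mathcal{I}_i$, $\tilde{\mathcal{I}}_i$ and the hypothesis $\alpha < 1/6$ shows that $z_i$ occupies a relative position bounded away from $0$ and $1$ within each such segment, so the burn-in requirement $\min(\tau',1-\tau')\geq\alpha$ of Theorems~\ref{thm:test} and~\ref{Thm:LocalisationRate3} is met. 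The at most $M(\nu+1)$ remaining intervals have segments shorter than $p$, so $\hat z$ and $\psi$ are $0$ by convention and the associated membership constraints are vacuous.

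It then remains to apply the single changepoint theorems regime by regime, replacing each regime-dependent constant $c_{\tau',\eta',\alpha},C_{\tau',\eta',\alpha},c'_{\tau',\eta',\alpha}$ by its extremum over the finitely many regimes that arise, which fixes the universal $c,C,c',C'$. For $\Omega_1$: every $(s,e]\in\mathcal{I}_0$ has a core free of changepoints, so the restricted sub-model has $\theta=0$ and Theorem~\ref{thm:test}(1) gives $\psi(D_{(s+n\varpi,e-n\varpi]})\to 0$ almost surely; a finite intersection over $\mathcal{I}_0\subseteq\mathcal{M}$ yields $\Omega_1$. For $\Omega_2$: every $(s,e]\in\mathcal{I}_i$ has a core of length at least $n\Delta_\tau/3$ containing exactly $z_i$ (using the spacing $z_j-z_{j-1}>n\Delta_\tau$ and that $\mathcal{I}_i$-intervals have length at most $n\Delta_\tau$), so the restricted sub-model is a single changepoint problem with difference $\theta^{(i)}$; since the core length is a fixed fraction of $n$ and $\|\theta^{(i)}\|_2\geq c'\sqrt{k}\log p/\sqrt{n}$ with $c'$ large, the signal condition in Theorem~\ref{thm:test}(2) holds and $\psi=1$ almost surely on $\mathcal{I}_i$. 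For $\Omega_3$: every $(s,e]\in\tilde{\mathcal{I}}_i$ likewise yields a single changepoint sub-model with difference $\theta^{(i)}$ and $z_i$ away from the boundary, and $k\leq p/2$ eventually because $k\log p/n\to0$ and $p/n\to1-\eta$, so Theorem~\ref{Thm:LocalisationRate3} applies and, after rescaling $|\hat z(D_{(s,e]})-(z_i-s)|/(e-s)\lesssim\lambda\sqrt{k}\log p/(\sqrt{e-s}\,\|\theta^{(i)}\|_2)$ by $(e-s)/n\leq1$, gives $n^{-1}|\hat z(D_{(s,e]})-(z_i-s)|\lesssim\lambda\sqrt{k}\log p/(\sqrt{n}\|\theta^{(i)}\|_2)=:\phi_i$ with $C'$ large; one then checks $\phi=\max_i\phi_i<\varpi$ under the stated conditions, and Theorem~\ref{Thm:Multiple} delivers $\hat\nu=\nu$ and $n^{-1}|\hat z_i-z_i|\leq\phi_i$ on $\Omega_0\cap\Omega_1\cap\Omega_2\cap\Omega_3$.

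The bulk of the work, and the main obstacle, is the uniformity bookkeeping rather than any new probabilistic estimate: the single changepoint theorems are almost-sure, eventual-in-$p$ statements within one fixed regime, so one must (i) use the coupling~\eqref{Eq:IntervalCoupling} to reduce to finitely many regimes, (ii) harmonise the regime-dependent constants into $c,C,c',C'$, and (iii) confirm that the three numerical thresholds --- $\Delta_\tau>3(1-\eta)$ for segment lengths exceeding $p$, $\alpha<1/6$ for the burn-in, and $\phi<\varpi$ for the narrowest-over-threshold step --- are exactly what is needed for the hypotheses of Theorems~\ref{thm:test}, \ref{Thm:LocalisationRate3} and~\ref{Thm:Multiple} to hold simultaneously.
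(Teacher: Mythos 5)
Your proposal is correct and follows essentially the same route as the paper's own proof: condition on the coupled interval fractions from \eqref{Eq:IntervalCoupling} so that $\Omega_0$ is $p$-free with $\mathbb{P}(\Omega_0)\geq 1-\nu e^{-\Delta_\tau^2 M/36}$, verify $\Omega_1,\Omega_2,\Omega_3$ for the finitely many intervals by applying Theorems~\ref{thm:test} and~\ref{Thm:LocalisationRate3} interval by interval with constants harmonised by taking extrema, and then invoke Theorem~\ref{Thm:Multiple}. Your bookkeeping of why $\Delta_\tau>3(1-\eta)$, $\alpha<1/6$ and the rescaling from interval length to $n$ make the single-changepoint hypotheses hold is in fact slightly more explicit than the paper's ``straightforward to verify'' treatment, but it is the same argument.
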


\section{Numerical study}
\label{Sec:Numerical}
The implementation of our single- and multiple-changepoint algorithms are both available in our GitHub repository.\footnote{\url{https://github.com/gaofengnan/charcoal}} 

\subsection{Tuning parameter choice and comparison of variants}
\label{sec:tuning-variants}
Theoretical analysis in Section~\ref{Sec:Theory} have assumed that the noise variance $\sigma^2$ is known. In practice, we may obtain an upward-biased estimator $\tilde\sigma$ as the median absolute deviation of entries of the $Q$ matrix. We note
\begin{align*}
\mathrm{Var}(Q_{t}) &= \mathbb{E}[\mathrm{Var}\{\tilde W_t^\top (W_z\theta + \xi)\mid X\}] + \mathrm{Var}[\mathbb{E}\{\tilde W_t^\top (W_z\theta + \xi)\mid X\}]\\
&=\sigma^2\mathbb{E}(\tilde W_t^\top \tilde W_t) + \mathrm{Var}(\tilde W_t^\top W_z\theta).
\end{align*}
Since $\mathbb{E}(\tilde W_t^\top \tilde W_t)$ has all diagonal entries equal to 1, every entry of $Q$ has a marginal variance of at least $\sigma^2$.

\begin{figure}[htbp]
  \centering
  \begin{subfigure}[b]{0.48\textwidth}
    \includegraphics[width=\textwidth]{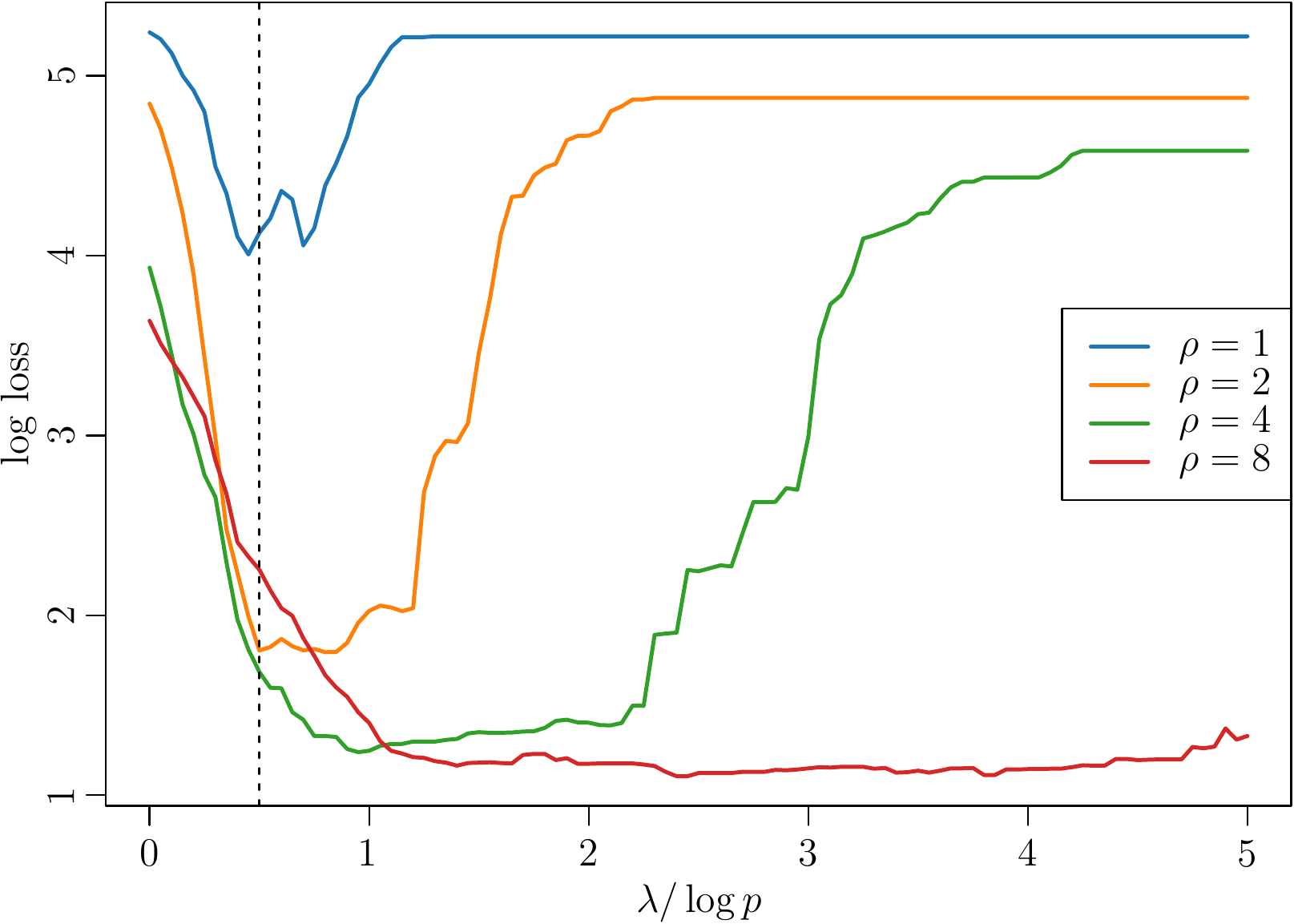}
    \caption{Varying $\rho$}
  \end{subfigure}
  ~ 
  \begin{subfigure}[b]{0.48\textwidth}
    \includegraphics[width=\textwidth]{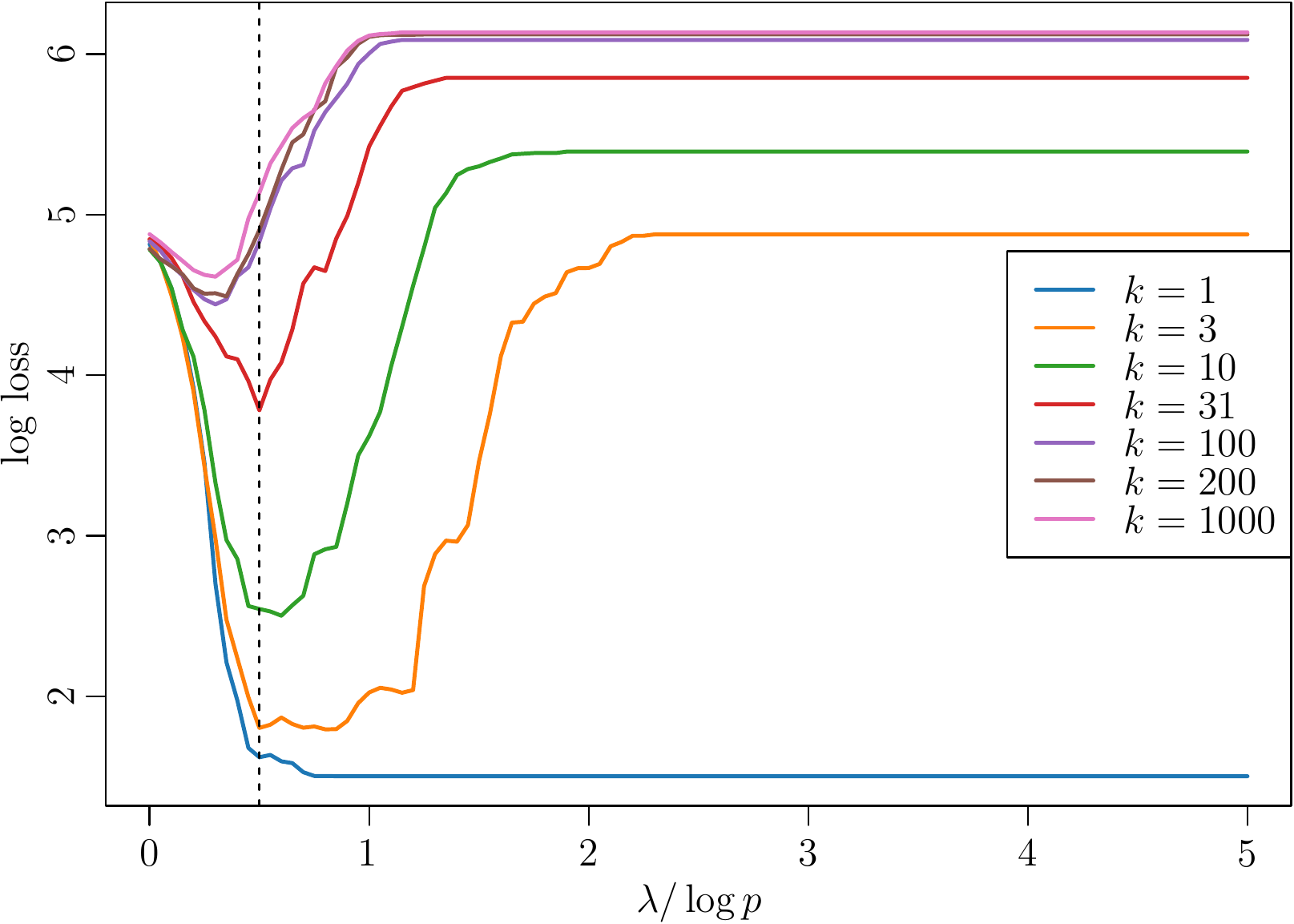}
    \caption{Varying $k$}
  \end{subfigure}
  \\ 
  \begin{subfigure}[b]{0.48\textwidth}
    \includegraphics[width=\textwidth]{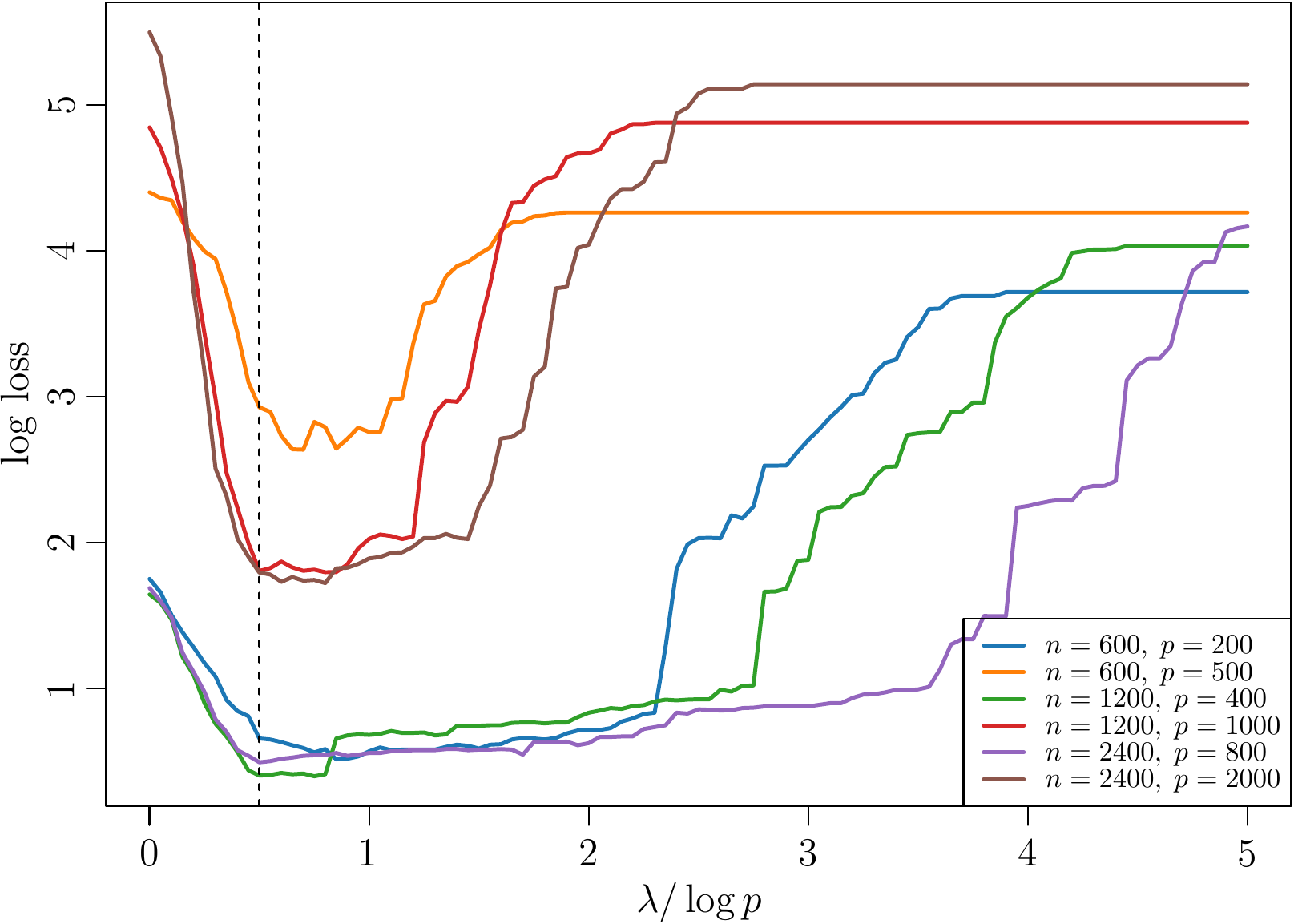}
    \caption{Varying dimensions}
  \end{subfigure}
  ~
  \begin{subfigure}[b]{0.48\textwidth}
    \includegraphics[width=\textwidth]{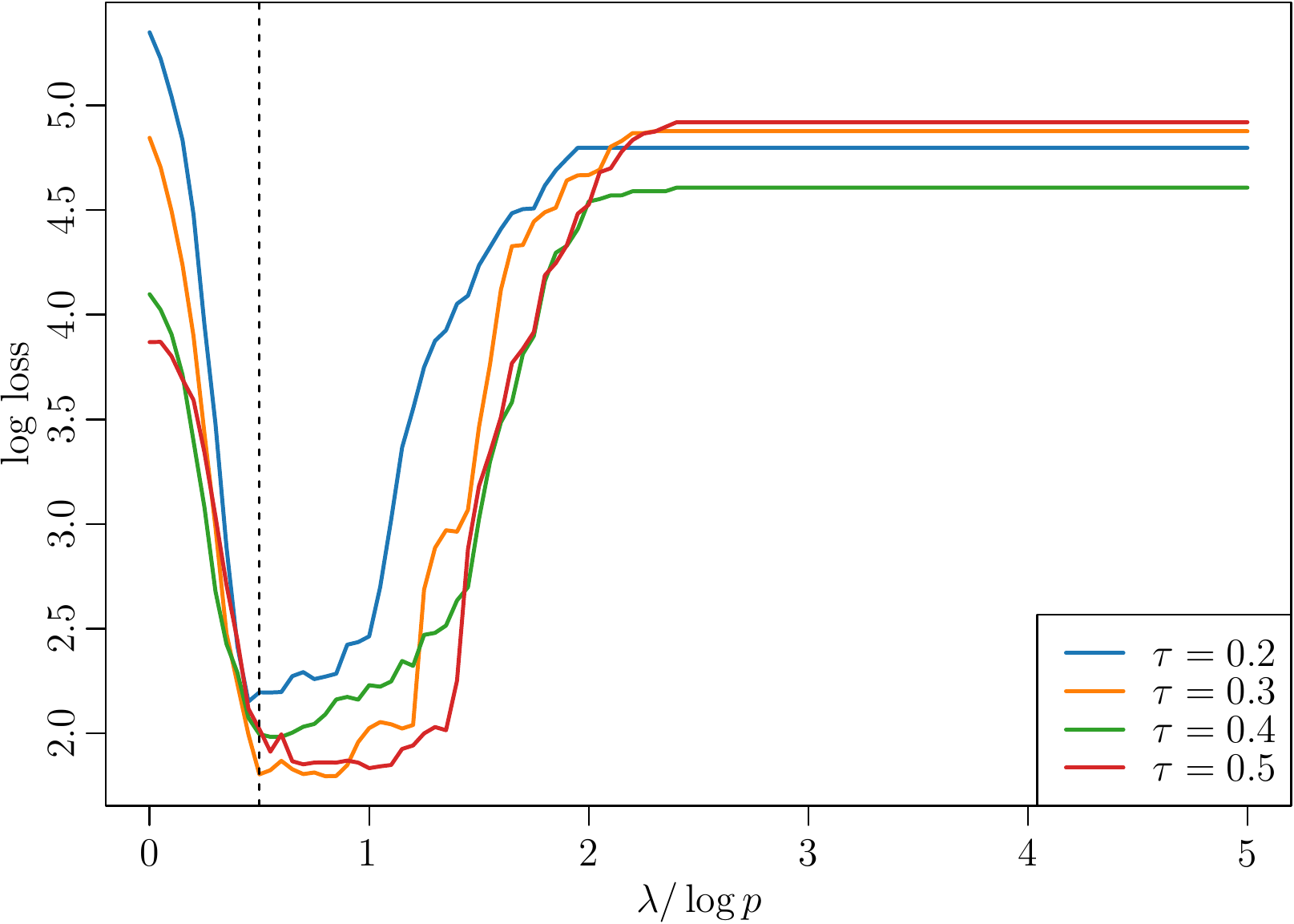}
    \caption{Varying change locations}
  \end{subfigure}
% \begin{tabular}{cc}
% \includegraphics[width=0.48\textwidth]{fig/fig1a.pdf} & 
% \includegraphics[width=0.48\textwidth]{fig/fig1b.pdf}\\
% \includegraphics[width=0.48\textwidth]{fig/fig1d.pdf}
% \end{tabular}
  \caption{\label{fig:tuning} The effect of choosing different $\lambda$ in a series of studies.  Our recommendation of $\lambda := 0.5\log p$ is marked in dashed vertical line in each panel. Unless specified otherwise in each one, the panels share the parameters $n=1200, p=1000, \tau=0.3, k = 3, \rho=2$.}
\end{figure}

% \teal{[Remove old algorithm 1 in the discuss below]}

Algorithm~\ref{Algo:Estimation2} requires a soft-thresholding tuning parameter $\lambda > 0$ as an input. The theoretical results in Section~\ref{Sec:Theory} suggests using $\lambda =c\sigma\log p$ for some $c>0$. We investigate here the performance of our algorithm at different soft-thresholding levels $\lambda$. Specifically, we computed the logarithmic average loss $|\hat z - z|$ of Algorithm~\ref{Algo:Estimation2} over 100 Monte Carlo repetitions for parameter settings of $n\in \{600,1200,2400\}$, $p\in\{n/3, 5n/6\}$, $\tau\in\{0.2,0.3,0.4,0.5\}$, $k\in\{1,3,10,\lfloor \sqrt{p}\rfloor, \lfloor 0.1p\rfloor, \lfloor 0.2p\rfloor, p\}$, $\rho:=\|\theta\|_2\in\{1,2,4,8\}$, $\sigma=1$ and various choices of $c \in [0.1, 5]$. In all our simulations here and below, we sample the vector of change in the regression coefficients $\theta$ uniformly from the set $\{v: \|v\|_0 = k, \|v\|_2=\rho\}$, and generate dense pre-change vector from $N_p(0, \max\{1,\rho^2\}I_p)$. Figure~\ref{fig:tuning} illustrates part of the simulation results where we vary one aspect of the parameters at a time.  From the figure, we see that a choice of $c = 0.5$ provides good statistical performance across the parameter settings considered, and we will henceforth adopt this choice of $\lambda = 0.5\tilde\sigma\log(p)$ in our subsequent numerical studies. 

We now compare the statistical performance of various versions of complementary-sketching-based approaches proposed in the paper, including Algorithms~\ref{Algo:Estimation2} and~\ref{Algo:lassobic} from Section~\ref{Sec:Method} and the slight variant Algorithm \ref{Algo:Estimation2}$'$ mentioned in Section~\ref{Sec:Theory} to facilitate theoretical analysis.  
For a demonstrative purpose, we have also included the naive hard- and soft-thresholded changepoint estimators $\hat{z}^{\hard}$ and $\hat{z}^{\soft}$ mentioned just above Algorithm~\ref{Algo:Estimation2}. 
We use the $\lambda$ choice suggested in the previous paragraph for $\hat{z}^{\hard}$ and $\hat{z}^{\soft}$, Algorithm~\ref{Algo:Estimation2} and its variant, and choose $\lambda_t$ in Algorithm~\ref{Algo:lassobic} via a five-fold cross-validation for each $t\in[n-1]$. Empirical observations suggest that Algorithms~\ref{Algo:Estimation2}, \ref{Algo:Estimation2}$'$ and \ref{Algo:lassobic} work well without any burn-in (i.e.\ $\alpha=0$). 
However, both $\hat{z}^{\soft}$ and $\hat{z}^{\hard}$ do suffer from more serious boundary effects, as seen in the large root mean squared errors in Table~\ref{Tab:PrimeComparison}. % and a burn-in parameter of $\alpha = 0.1$ is used for these two algorithms. 
In addition, Algorithm \ref{Algo:Estimation2} has roughly the same but slightly better estimation accuracy compared to its primed variant.  This justifies our recommendation of Algorithm~\ref{Algo:Estimation2} over its primed variant, and the similarity in performance further consolidates the relevance of our theoretical analysis on the primed variant as a proof device.

\begin{table}[htbp]
\centering

\begin{tabular}{rrrrrrrrrr}
\toprule
$n$ & $p$ & $k$ & $\rho$ & $\hat{z}^{\soft}$ & $\hat{z}^{\hard}$ & Alg$1$  & Alg$1'$ & Alg$2$\\
\midrule
$600$ & $200$ & $3$ & $1$ & $50.1$ & $178.35$ & $12.6$ & $40.79$ & $22.34$\\
&  &  & $2$ & $3.98$ & $88.76$ & $3.1$ & $5.01$ & $4.58$\\
&  &  & $4$ & $25.37$ & $92.3$ & $2.14$ & $4.33$ & $2.29$\\
\addlinespace
&  & $14$ & $1$ & $31.7$ & $122.11$ & $37.36$ & $83.59$ & $129.71$\\
&  &  & $2$ & $6.4$ & $88.04$ & $6.14$ & $7.32$ & $9.83$\\
&  &  & $4$ & $3.88$ & $58.92$ & $3.82$ & $4.67$ & $2.95$\\
\addlinespace
$1200$ & $400$ & $3$ & $1$ & $11.24$ & $196.81$ & $10.95$ & $13.34$ & $19.54$\\
&  &  & $2$ & $3.15$ & $136.02$ & $3.69$ & $4.58$ & $4.96$\\
&  &  & $4$ & $35.65$ & $101.17$ & $1.67$ & $4.36$ & $2.09$\\
\addlinespace
&  & $20$ & $1$ & $39.17$ & $146.98$ & $18.14$ & $37.63$ & $168.56$\\
&  &  & $2$ & $5.06$ & $144.5$ & $7.29$ & $7.59$ & $15.51$\\
&  &  & $4$ & $4.72$ & $106.84$ & $2.72$ & $2.93$ & $4.47$\\
\bottomrule
\end{tabular}
\caption{\label{Tab:PrimeComparison}Comparisons of performances of $\hat{z}^{\soft}$, $\hat{z}^{\hard}$, Algorithms~\ref{Algo:Estimation2}, \ref{Algo:Estimation2}$'$ and \ref{Algo:lassobic} in various settings in terms of root mean squared error.  No burn-in is applied anywhere ($\alpha=0$).  The true change takes place at $0.3n$.}
\end{table}

\subsection{Comparisons with other methods}
From the discussion above, we recommend using Algorithms~\ref{Algo:Estimation2} and~\ref{Algo:lassobic} for their robustness against the choice of the burn-in parameter $\alpha$, and for the more accurate estimation of Algorithm~\ref{Algo:Estimation2} over its primed variant.
We will henceforth focus on Algorithms~\ref{Algo:Estimation2} and~\ref{Algo:lassobic}, which we call \charcoalproj and \charcoallasso, respectively.  In this section, we compare the performance of \charcoalproj and \charcoallasso with existing approaches in the literature. Specifically, we will compare against the VPBS algorithm of \citet{RinaldoWangWen2021}, two-sided Lasso-based approaches of \citet{lee2016lasso} (LSS) and \citet{Leonardi2016Computationally} (LB), and a two-stage refinement approach of \citet{kaul2019efficient} (KJF). We have used the authors' own implementation for VPBS and KJF, and \citet{kaul2019efficient}'s implementation for LSS. We have implemented LB ourselves using the recommended tuning parameter choices as in \citet{Leonardi2016Computationally}. It is worth noting that none of the four existing methods in the literature were designed to estimate changes in the regression coefficients when both the pre- and post-change coefficients are dense. 

We compare the performance in terms of mean absolute loss of various methods in a single changepoint estimation task for $n\in\{600,1200\}$, $p\in\{n/3, 5n/6\}$, $\tau\in\{0.1,0.3,0.5\}$, $k\in\{3,\lfloor\sqrt{p}\rfloor, p\}$, $\rho\in\{1,2,4,8\}$. Table~\ref{Tab:Comparison} shows a representative subset of these simulation results. We see that none of VPBS, LB, KJF and LSS show any sign of consistent estimation as their average loss do not decrease as the signal strength increases. On the other hand, both \charcoalproj and \charcoallasso have shown highly promising performance in various settings. It is surprising that \charcoalproj and \charcoallasso also seem to work even when the vector of change is dense. We notice that \charcoallasso shows better estimation accuracy when either the signal strength $\rho$ is high or the vector of change $\theta$ is dense.

\begin{table}[htbp]
	\begin{center}
    \begin{tabular}{rrrrrrrrr}
  \toprule
$p$ & $k$ & $\rho$ & \charcoalproj & \charcoallasso & VPBS & LB & KJF & LSS\\
\midrule
$400$ & $3$ & $1$ & $\textbf{7.2}$ & $13.2$ & $452.4$ & $556.1$ & $238.8$ & $472.2$\\
&  & $2$ & $\textbf{2.2}$ & $3.5$ & $476.3$ & $569.2$ & $239.3$ & $364.1$\\
&  & $4$ & $\textbf{1.1}$ & $1.5$ & $434.2$ & $532.8$ & $239.1$ & $272.1$\\
&  & $8$ & $\textbf{0.7}$ & $0.8$ & $326.3$ & $496.8$ & $239.1$ & $310.8$\\
\addlinespace
 & $20$ & $1$ & $\textbf{12.4}$ & $85.4$ & $422.7$ & $528.8$ & $238.9$ & $479.5$\\
&  & $2$ & $\textbf{3.0}$ & $9.2$ & $494.9$ & $546.8$ & $238.9$ & $284.5$\\
&  & $4$ & $\textbf{2.0}$ & $2.6$ & $431.9$ & $553.1$ & $239.1$ & $268.5$\\
&  & $8$ & $1.9$ & $\textbf{0.8}$ & $356.2$ & $513.3$ & $239.3$ & $261.5$\\
\addlinespace
 & $400$ & $1$ & $\textbf{162.2}$ & $344.2$ & $477.8$ & $569.8$ & $238.8$ & $429.9$\\
&  & $2$ & $\textbf{46.3}$ & $338.4$ & $504.0$ & $583.2$ & $238.8$ & $252.4$\\
&  & $4$ & $25.3$ & $\textbf{13.3}$ & $446.3$ & $554.1$ & $238.9$ & $285.6$\\
&  & $8$ & $20.7$ & $\textbf{3.0}$ & $355.6$ & $487.6$ & $239.1$ & $250.1$\\
\addlinespace
$1000$ & $3$ & $1$ & $\textbf{60.7}$ & $113.3$ & $241.6$ & $429.5$ & $237.2$ & $227.3$\\
&  & $2$ & $\textbf{8.3}$ & $11.8$ & $243.4$ & $441.4$ & $239.0$ & $228.2$\\
&  & $4$ & $\textbf{2.9}$ & $4.0$ & $239.5$ & $366.9$ & $243.9$ & $230.6$\\
&  & $8$ & $2.4$ & $\textbf{1.4}$ & $235.1$ & $245.1$ & $262.2$ & $230.7$\\
\addlinespace
 & $31$ & $1$ & $300.3$ & $364.9$ & $233.4$ & $440.1$ & $238.8$ & $\textbf{227.4}$\\
&  & $2$ & $\textbf{71.7}$ & $140.9$ & $242.5$ & $469.5$ & $238.9$ & $228.3$\\
&  & $4$ & $16.0$ & $\textbf{12.5}$ & $251.3$ & $358.4$ & $238.9$ & $224.5$\\
&  & $8$ & $13.7$ & $\textbf{4.6}$ & $244.5$ & $249.0$ & $238.2$ & $230.1$\\
\addlinespace
 & $1000$ & $1$ & $275.5$ & $359.8$ & $232.6$ & $483.0$ & $239.3$ & $\textbf{231.8}$\\
&  & $2$ & $256.9$ & $320.8$ & $238.4$ & $447.4$ & $238.9$ & $\textbf{229.2}$\\
&  & $4$ & $224.1$ & $\textbf{91.0}$ & $242.7$ & $378.2$ & $239.1$ & $228.0$\\
&  & $8$ & $194.5$ & $\textbf{39.6}$ & $246.4$ & $253.5$ & $242.4$ & $226.7$\\
\bottomrule
\end{tabular}
		\caption{\label{Tab:Comparison}Average loss of various changepoint methods under different settings. Other parameters: $n=1200$, $z=360$. The method with the least average loss in each line is marked in bold. }
	\end{center}
\end{table}

\subsection{Model misspecification}
\label{Sec:Model-Mis}
While we have focused on the Gaussian Orthogonal Ensemble (GOE) design (i.e.\ $X$ has independent $N(0,1)$ entries) and Gaussian noise in the theoretical analysis, our methodology can be applied in more general settings. In this subsection, we investigate the robustness of the estimation accuracy of our method to deviations from this Gaussian distributional assumptions. Specifically for $n= 1,200$, $p = 400$, $\tau=0.3$, $k=20$ and $\rho \in \{1.5^0, \dots, 1.5^8\}$, we varied the design matrix $X$ to have either $N_p(0,\Sigma)$ rows, where $\Sigma =  (0.7^{|i-j|})_{i,j\in[p]}$ has an autoregressive Toeplitz structure, or independent Rademacher entries. We also vary the noise distribution to take $t_4$, $t_6$, centred $\mathrm{Exp}(1)$ or Rademacher distributions. Overall, we see from Figure~\ref{Fig:Robustness} that the performance of \charcoallasso is robust to both non-GOE design matrices and discrete, heavy-tailed or skewed noise distributions. Similar results hold for the \charcoalproj method. 

\begin{figure}[htbp]
	\centering
	\begin{subfigure}[b]{0.45\textwidth}
		\includegraphics[width=\textwidth]{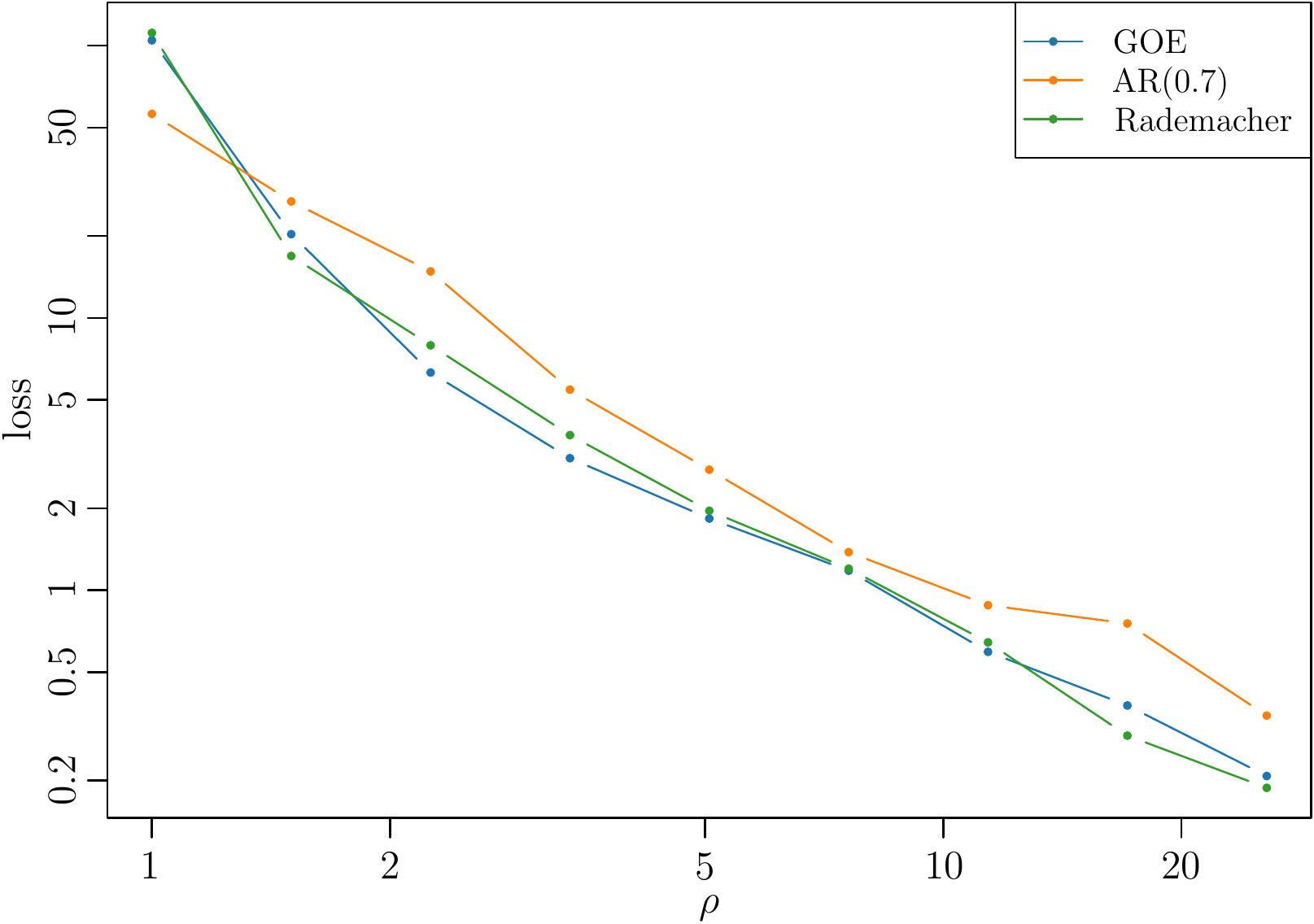}
		\caption{Varying design matrices}
	\end{subfigure}
	\begin{subfigure}[b]{0.45\textwidth}
		\includegraphics[width=\textwidth]{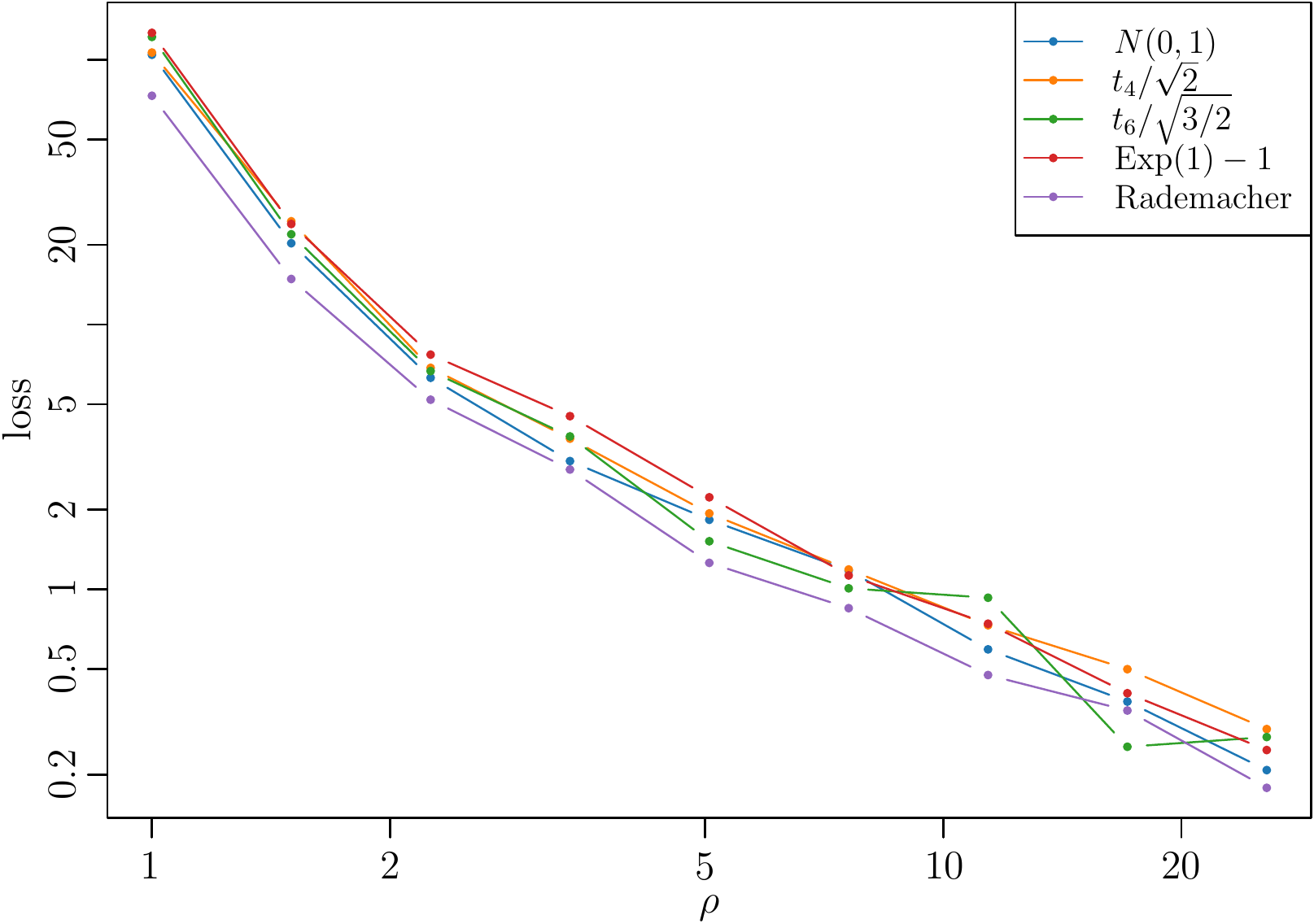}
		\caption{Varying noise distributions}
	\end{subfigure}
	\caption{\label{Fig:Robustness} Robustness to varying design matrices and noise distributions. }

\end{figure}

\subsection{Multiple changepoints}
\label{sec:simu-multi}
As mentioned in Section~\ref{Sec:Method}, our \charcoal algorithms can be easily combined with generic multiple changepoint methods to deal with multiple changepoints, and we proposed a specific version in Algorithm~\ref{Algo:Multiple} of such a multiple changepoint localization procedure. We run Algorithm~\ref{Algo:Multiple} with $\varpi = 0$ and $M = 200$. For the single changepoint estimator input $\hat z$ in Algorithm~\ref{Algo:Multiple}, we employ Algorithm~\ref{Algo:Estimation2} with the recommended value of $\lambda$ in Section~\ref{sec:tuning-variants} and the burn-in parameter $\alpha = 0.05$. For the testing procedure input $\psi$ of Algorithm~\ref{Algo:Multiple}, we run Algorithm~\ref{Algo:Estimation2} to obtain output $H_{\max}$ and define $\psi(X,Y) = \mathbbb{1}_{\{H_{\max} > T\}}$, where the testing threshold $T$ is chosen by a Monte Carlo simulation as follows. We generate $B=1000$ pairs of $(X,Y)$ under model~\eqref{eqn:y-x-z} with $\nu=0$ (i.e.\ no changepoint), and run Algorithm~\ref{Algo:Estimation2} with these synthetic $(X,Y)$ pairs and the same $\lambda$ and $\alpha$ choices as above. This would return $B$ test statistics $(H_{\max}^{b})_{b\in[B]}$, which are used to estimate an upper $0.01/M$ quantile by fitting a generalized extreme value distribution \citep{smith1985maximum}. 

% There are several practical issues that come with the generic multiple changepoint localizations in Algorithm~\ref{Algo:Multiple}.  
% Despite our best effort to determine an appropriate threshold \(T\) in Algorithm~\ref{Algo:Multiple} (which is a difficult problem in its own right), it more than often reports too many changepoints.  
% Moreover, since the reported changepoints are obtained in the narrowest interval, their individual estimation quality is in question. 
% These issues motivate us to take the two-stage approach as follows.

While Algorithm~\ref{Algo:Multiple} already produces good estimators of multiple changepoints, its performance can be further improved by the following post-processing procedures. Such post-processing has previously been described in e.g., \citet[Section~2 of online supplement]{fryzlewicz2018tail}.

Specifically, after we obtain an initial candidate set of changepoints using Algorithm~\ref{Algo:Multiple}, we iteratively run the test prescribed by \eqref{eq:test} on the largest time interval containing each candidate changepoint as the only estimated changepoint, and remove that candidate changepoint if the test is non-significant. For the remaining candidate changepoints $\hat{z}_1,\ldots,\hat{z}_{\hat\nu}$, we refine their estimated locations in two steps. We first perform a `midpoint' refinement, where we use (for instance) Algorithm~\ref{Algo:lassobic} to output a refined estimator $\tilde{z}_i$ based on data $\{(x_t, y_t):t\in ( (\hat{z}_{i-1} + \hat{z}_i)/2, (\hat{z}_{i} + \hat{z}_{i+1})/2 ]\}$ for each $i\in [\hat\nu]$. Here, we use the convention that $\hat{z}_0 = 0$ and $\hat{z}_{\hat\nu+1} = n$. Using midpoints between successive estimated changepoints ensures that each $( (\hat{z}_{i-1} + \hat{z}_i)/2, (\hat{z}_{i} + \hat{z}_{i+1})/2 ]$ contains with high probability at most one true changepoint. However, it does not use the full data available around each true changepoint. As such, we perform a second refinement step after this, where we use Algorithm~\ref{Algo:lassobic} to output a further refined estimator $\hat{z}_i^{\mathrm{refined}}$ based on data $\{(x_t, y_t):t\in ( \tilde{z}_{i-1} + \alpha n, \tilde{z}_{i+1} - \alpha n]\}$ for each $i\in [\hat\nu]$ with $\alpha$ being the burn-in parameter as in Algorithm~\ref{Algo:Multiple}. Again, we use the convention that $\tilde{z}_0 = 0$ and $\tilde{z}_{\hat\nu+1} = n$. For both refinement steps, we may also use  Algorithm~\ref{Algo:Estimation2} in place of Algorithm~\ref{Algo:lassobic}, and they have very similar performances in our numerical experiment.  For definiteness and simpler presentation, we employ Algorithm~\ref{Algo:lassobic} for both refinement steps in the following numerical experiments.

\begin{table}[htbp]
\centering
\begin{tabular}{rrrrrrrrrrr}
  \toprule
  $n$ &  $p$ &  $k$ &  $\rho_{\min}$ & \multicolumn{5}{c}{$\hat\nu - \nu$ value counts} &  Haus &  ARI\\
  \cmidrule(lr){5-9} 
  & & & &  $-3$ & $-2$ & $-1$ & $0$ & $1$ & &\\
\midrule
$1200$ & $200$ & $3$ & 0.8 & $0$ & $0$ & $96$ & $4$ & $0$ & $292.8$ & $0.742$\\
 &  &  & 1.2 & $0$ & $0$ & $22$ & $78$ & $0$ & $75.4$ & $0.918$\\
 &  &  & 1.6 & $0$ & $0$ & $0$ & $98$ & $2$ & $8.8$ & $0.978$\\
 \addlinespace
 &  & $10$ & 0.8 & $0$ & $2$ & $97$ & $1$ & $0$ & $304.9$ & $0.710$\\
 &  &  & 1.2 & $0$ & $0$ & $42$ & $55$ & $3$ & $141.1$ & $0.856$\\
 &  &  & 1.6 & $0$ & $0$ & $1$ & $96$ & $3$ & $18.0$ & $0.960$\\
 \addlinespace
 &  & $100$ & 0.8 & $3$ & $67$ & $30$ & $0$ & $0$ & $591.7$ & $0.303$\\
 &  &  & 1.2 & $0$ & $4$ & $88$ & $8$ & $0$ & $319.3$ & $0.611$\\
 &  &  & 1.6 & $0$ & $0$ & $52$ & $46$ & $2$ & $217.1$ & $0.759$\\
 \addlinespace
$2400$ & $400$ & $3$ & 0.8 & $0$ & $0$ & $25$ & $75$ & $0$ & $155.3$ & $0.881$\\
 &  &  & 1.2 & $0$ & $0$ & $0$ & $100$ & $0$ & $14.3$ & $0.975$\\
 &  &  & 1.6 & $0$ & $0$ & $0$ & $100$ & $0$ & $10.1$ & $0.983$\\
 \addlinespace
 &  & $10$ & 0.8 & $0$ & $15$ & $53$ & $32$ & $0$ & $376.9$ & $0.720$\\
 &  &  & 1.2 & $0$ & $0$ & $2$ & $98$ & $0$ & $37.3$ & $0.945$\\
 &  &  & 1.6 & $0$ & $0$ & $1$ & $99$ & $0$ & $21.0$ & $0.970$\\
 \addlinespace
 &  & $100$ & 0.8 & $42$ & $57$ & $1$ & $0$ & $0$ & $1154.9$ & $0.184$\\
 &  &  & 1.2 & $0$ & $32$ & $54$ & $14$ & $0$ & $647.0$ & $0.457$\\
 &  &  & 1.6 & $0$ & $0$ & $14$ & $84$ & $2$ & $376.9$ & $0.658$\\
\bottomrule
\end{tabular}
\caption{\label{Tab:Multiple}Summary of results of multiple changepoint estimations under (M1) and (M2) described in Section~\ref{sec:simu-multi} with $\rho\in\{0.8,1.2,1.6\}$ and $k\in\{3,10,100\}$.  The first nine rows of the table corresponds to setting (M1) and the last nine rows corresponds to (M2).
The columns of $\hat{\nu} - \nu$ tabulates the difference in number of estimated and true changepoints over 100 Monte Carlo repetitions. The `Haus' and `ARI' columns measure the average Hausdorff distance and the average adjusted rand index between the discovered partition and the true partition over 100 repetitions.  
}
\end{table}

We assume that the regression noise level $\sigma$ is known and consider the following two multiple changepoint specifications in our simulations: (M1) $n=1200$, $p=200$, $\nu=3$ and three changepoints are located at $z = (z_1,z_2,z_3) = (240, 540, 900)$ with signal sizes $(\|\theta^{(1)}\|_2,\|\theta^{(2)}\|_2,\|\theta^{(3)}\|_2) = \rho_{\min}\times  (1, 1.5, 2)$ and sparsity $\|\theta^{(1)}\|_0=\|\theta^{(2)}\|_0=\|\theta^{(3)}\|_0=k$ respectively for various $\rho_{\min}$ and $k$; (M2) $n=2400$, $p=400$, $\nu=4$ and four changepoints are located at $z=(z_1,z_2,z_3,z_4) = (720,1320,1800,2160)$ with signal sizes $(\|\theta^{(1)}\|_2,\|\theta^{(2)}\|_2,\|\theta^{(3)}\|_2,\|\theta^{(4)}\|_2) = \rho_{\min} \times (1,1.15,1.45,2.18)$ and sparsity $\|\theta^{(1)}\|_0=\|\theta^{(2)}\|_0=\|\theta^{(3)}\|_0 = \|\theta^{(4)}\|_0=k$ respectively for various $\rho_{\min}$ and $k$.
% \begin{enumerate}[label={(M\arabic*)}]
% \item $n=1200$, $p=200$, $\nu=3$ and three changepoints are located at $z = (z_1,z_2,z_3) = (240, 540, 900)$ with signal sizes $(\|\theta^{(1)}\|_2,\|\theta^{(2)}\|_2,\|\theta^{(3)}\|_2) = \rho_{\min}\times  (1, 1.5, 2)$ and sparsity $\|\theta^{(1)}\|_0=\|\theta^{(2)}\|_0=\|\theta^{(3)}\|_0=k$ respectively for various $\rho_{\min}$ and $k$.
% \item $n=2400$, $p=400$, $\nu=4$ and four changepoints are located at $z=(z_1,z_2,z_3,z_4) = (720,1320,1800,2160)$ with signal sizes $(\|\theta^{(1)}\|_2,\|\theta^{(2)}\|_2,\|\theta^{(3)}\|_2,\|\theta^{(4)}\|_2) = \rho_{\min} \times (1,1.15,1.45,2.18)$ and sparsity $\|\theta^{(1)}\|_0=\|\theta^{(2)}\|_0=\|\theta^{(3)}\|_0 = \|\theta^{(4)}\|_0=k$ respectively for various $\rho_{\min}$ and $k$.
% \end{enumerate}

Note that for (M2), the signal sizes are chosen such that 
\[
\|\theta^{(i)}\|_2^2\frac{(z_i - z_{i-1})(z_{i+1}-z_i)(z_{i+1}-z_{i-1}-p)}{(z_{i+1}-z_{i-1})^2}
\]
is approximately constant for each $i\in[\nu]$, which according to \citet{gao2021twosample} means that the effective signal-to-noise ratio of testing for each changepoint $z_i$ within the interval $(z_{i-1}, z_{i+1}]$ is almost constant. Table~\ref{Tab:Multiple} reports the multiple changepoint estimation performances for both (M1) and (M2) with $\rho_{\min}\in\{0.8,1.2,1.6\}$ and $k\in\{3,10,100\}$. The multiple changepoint estimation accuracy is measured in terms of the difference between the number of estimated and true changepoints, the average Hausdorff distance between the sets $\{z_i:i\in[n]\}$ and $\{\hat z^{\mathrm{refined}}_i:i\in[\hat n]\}$ and finally the average adjusted Rand index (ARI) \citep{rand1971objective} of the estimated segments against the truth, over 100 Monte Carlo repetitions. We see from Table~\ref{Tab:Multiple} that the promising single changepoint estimation performance of our methodology carries over to the multiple changepoint settings. 

Figure~\ref{Fig:Multi} visualizes the simulation results by showing the histograms of estimated changepoints in four of the parameter settings shown in Table~\ref{Tab:Multiple}. It is worth noting that in the bottom two panels of the figure, where the effective signal-to-noise ratios are chosen to be approximately constant for all the four changepoints, we indeed see a similar number of times in identifications of each changepoint.

%We then conduct the numerical experiment of applying the above two-stage multiple changepoint estimation to $18$ different settings, for each of which we repeat the experiment $100$ times.  The results are aggregated and presented in Table~\ref{Tab:Multiple} for an overview.  
%We also lay out the histograms of the estimated changepoint locations in Figure~\ref{Fig:Multi} for an easy visualization of the estimation performances in different settings. 
%Note that Figures~\ref{fig:3a}, \ref{fig:3b}, \ref{fig:3c} and \ref{fig:3d} corresponds to the first, the third, the 13th and the 15th lines of Table~\ref{Tab:Multiple}.
%
%For each change, we may calculate the signal-to-noise ratio for each change by $n \kappa_1$ with $\kappa_1 = z(n-z)(n-p)/n^3$ and $n$ being the length of the interval where the changepoint in question is the only one, see the results and discussions about the `effective' sample size in \cite{gao2021twosample}. 
%For instance, for the settings $(n,p) = (1200,400)$ with true changes at fractions $(0.2,0.45,0.75)$ as prescribed in the first 9 lines in Table~\ref{Tab:Multiple}, the first changepoint at $z = 240$ is the only change in the interval $[1,540]$, resulting in the signal-to-noise ratio of $240*300*(540-400)/540^3 = $
%The signal-to-noise ratio for the three-changepoint settings (the above 9 lines in Table~\ref{Tab:Multiple}) are $(1,1.748,2.331)$. 
%The signal sizes for the four-changepoint settings (the 9 lines below in Table~\ref{Tab:Multiple}) are chosen such that the signal-to-noise ratios are about the same for each change. 

\begin{figure}[htbp]
  \centering
  \begin{subfigure}[b]{0.450\textwidth}
    \includegraphics[width=\textwidth]{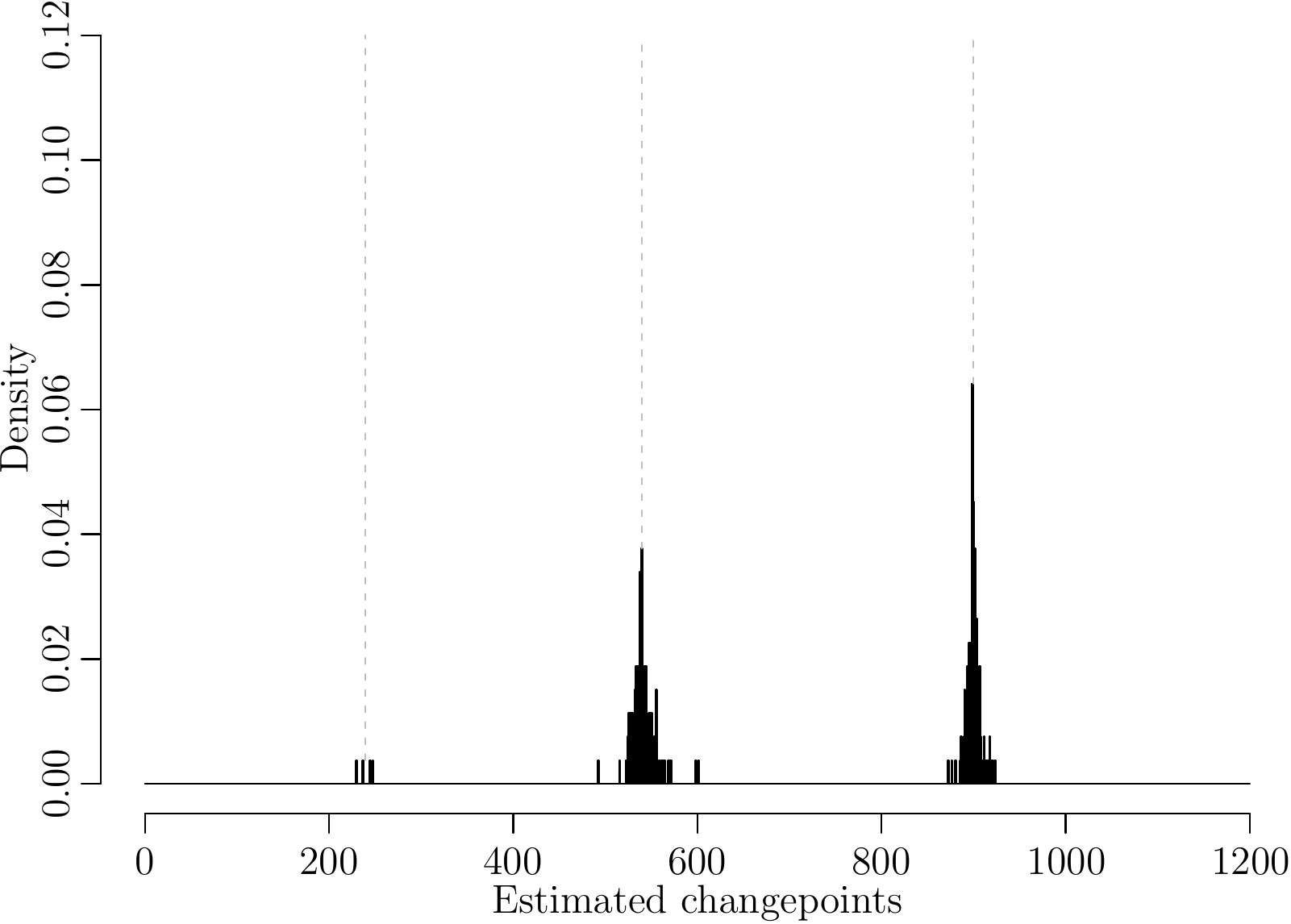}
    \caption{\label{fig:3a}$n=1200$, $p = 200$, $k=3$, $\rho_{\min} = 0.8$}
  \end{subfigure}
  ~ 
  \begin{subfigure}[b]{0.450\textwidth}
    \includegraphics[width=\textwidth]{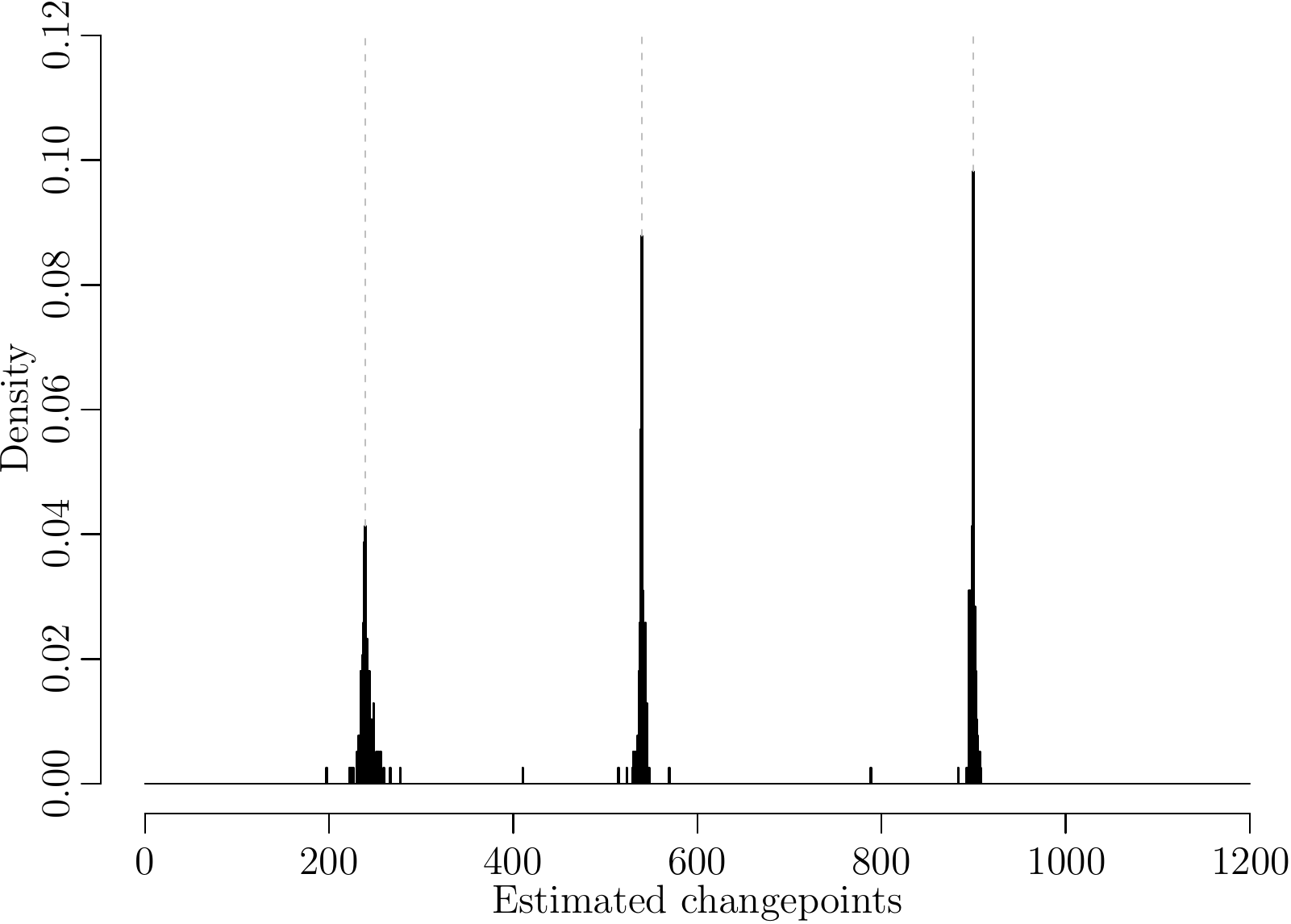}
    \caption{\label{fig:3b}$n=1200$, $p = 200$, $k=3$, $\rho_{\min} = 1.6$}
  \end{subfigure}
  \\ 
  \begin{subfigure}[b]{0.450\textwidth}
    \includegraphics[width=\textwidth]{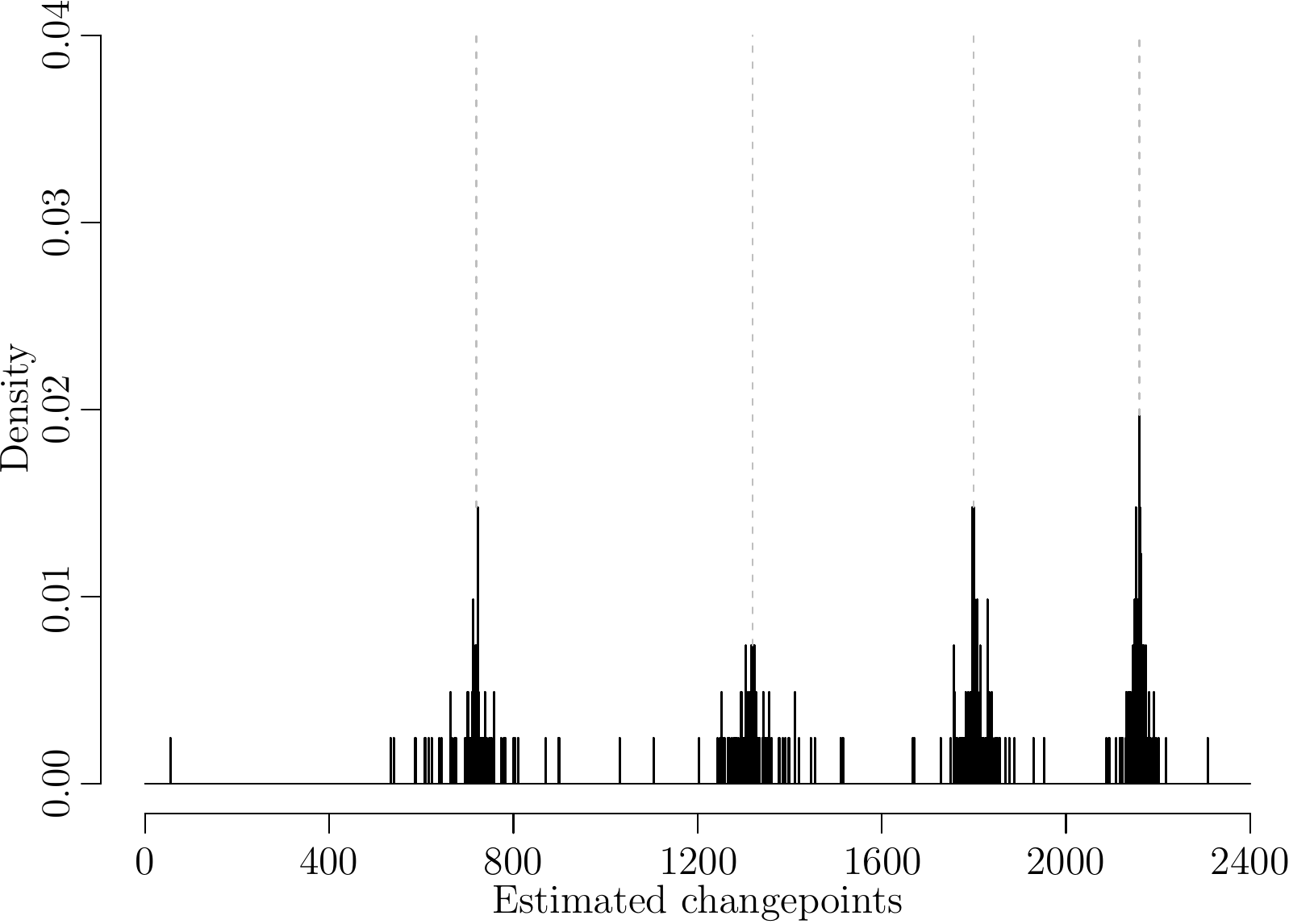}
    \caption{\label{fig:3c}$n=2400$, $p = 400$, $k=10$, $\rho_{\min} = 0.8$}
  \end{subfigure}
  ~
  \begin{subfigure}[b]{0.450\textwidth}
    \includegraphics[width=\textwidth]{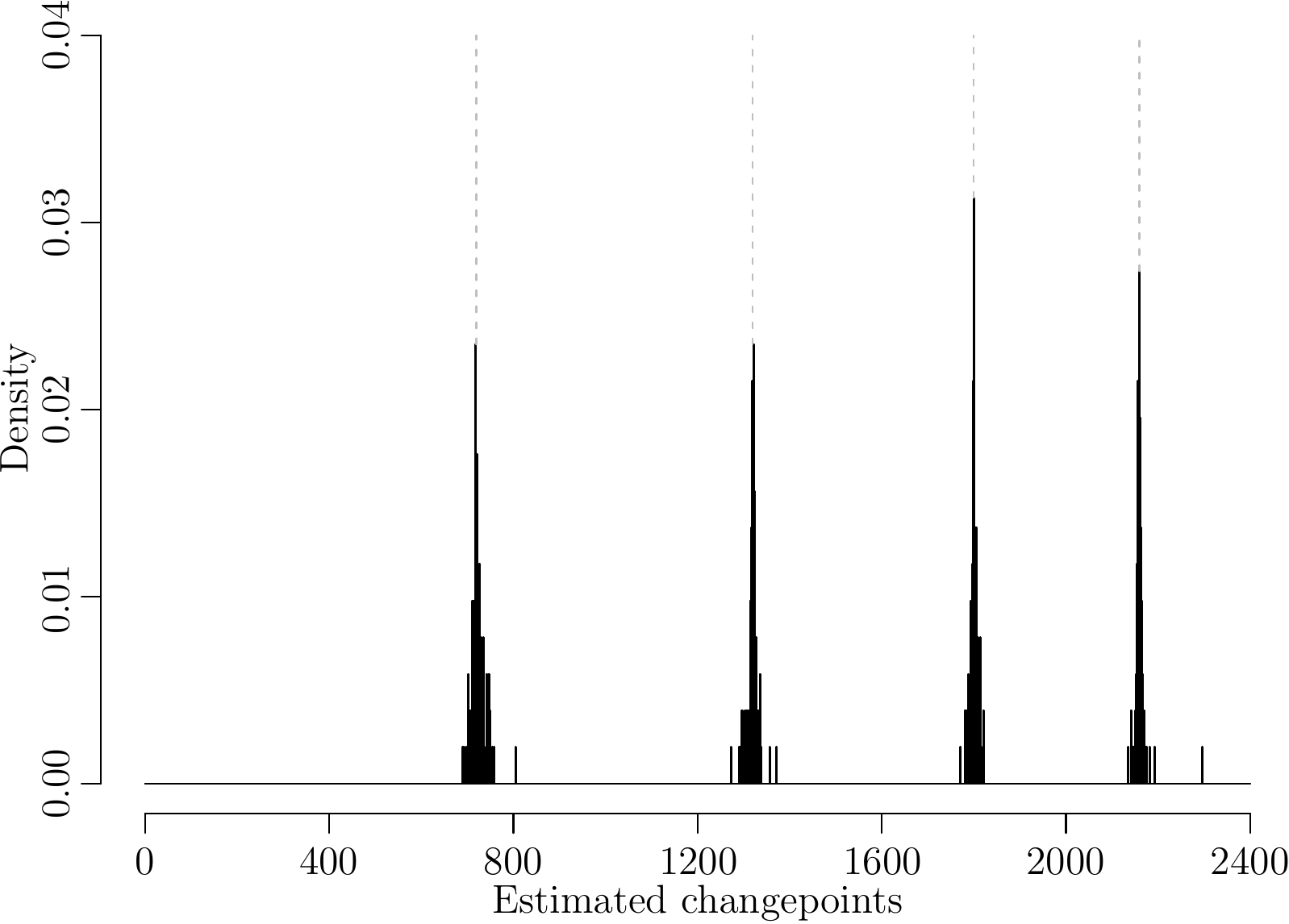}
    \caption{\label{fig:3d}$n=2400$, $p = 400$, $k=10$, $\rho_{\min} = 1.6$}
  \end{subfigure}
\caption{\label{Fig:Multi}Histogram of estimated changepoint locations in four settings.  The true changes take place at $z=(240,540,900)$ for the $(n,p)=(1200,200)$ specifications in the above two panels with the signal strengths at respective changes being $\rho_{\min} \times (1,1.5,2)$.  For the two panels below with $(n,p) = (2400,400)$, $z=(720, 1320, 1800,2160)$ with $\rho_{\min}\times (1,1.15,1.45,2.175)$.  The locations of true changes are marked in lightly-coloured dashed vertical lines in each plot.}
\end{figure}
\subsection{Real data example}
\label{sec:realdata}
In this subsection, we showcase how the \charcoal algorithm can be applied to a single-cell gene expression dataset from \citet{suo2023dandelion}. The original data consists of the logarithmic normalized gene expression levels of 3211 genes measured in 11853 cells along the developmental trajectory from proliferating double positive cells (DP(P) cells) to quiescent double positive cells (DP(Q) cells), $\alpha\beta$T entry cells (ABT cells) and finally to CD4+ T cells. These cells have been ordered in pseudotime according to their development stage in \citet{suo2023dandelion}, which we use as our timeline (see Figure~\ref{Fig:Pseudotime}). 
We are interested in understanding the change in the gene regulatory networks along this time trajectory. We can estimate the changepoints by modelling the logarithmic normalized expressions using Gaussian graphical models and seek changes in the nodewise regression coefficients of each gene against the remaining genes. To speed up the computation, we preprocess the data by subsampling $1/3$ of the original cells and only using genes that have non-zero expression in at least $5\%$ of the cells. Our preprocessed data is available on the GitHub repository. The changepoints are estimated using Algorithm~\ref{Algo:Estimation2} with tuning parameters chosen as suggested in Section~\ref{sec:tuning-variants}. In Table~\ref{Tab:ListGenes}, we list the genes that reported most significant test statistics in their nodewise regression coefficients along this pseudotime trajectory. From Figure~\ref{Fig:Pseudotime}, we see that most of the changes are identified immediately before the boundary between the DP(P) and DP(Q) boundary, and most of the associated genes TK1, CKAP2L, TTK, ARHGEF39, DEPDC1, SPC25, GTSE1, HMMR, CENPA are well-known regulators for cell proliferation in biology \citep[][see, e.g.]{bitter2020thymidine, mills1992expression, zhou2018arhgef39, zhang2019depdc1, guo2016silencing}. The change in nodewise regression coefficient of the RAG2 gene occurred immediately before the DP(Q) and ABT boundary, which agrees with the existing literature that RAG2 is a regulator for T cell development \citep{kalman2004mutations}.

\begin{figure}[htbp]
\begin{center}
\includegraphics[width=0.6\textwidth]{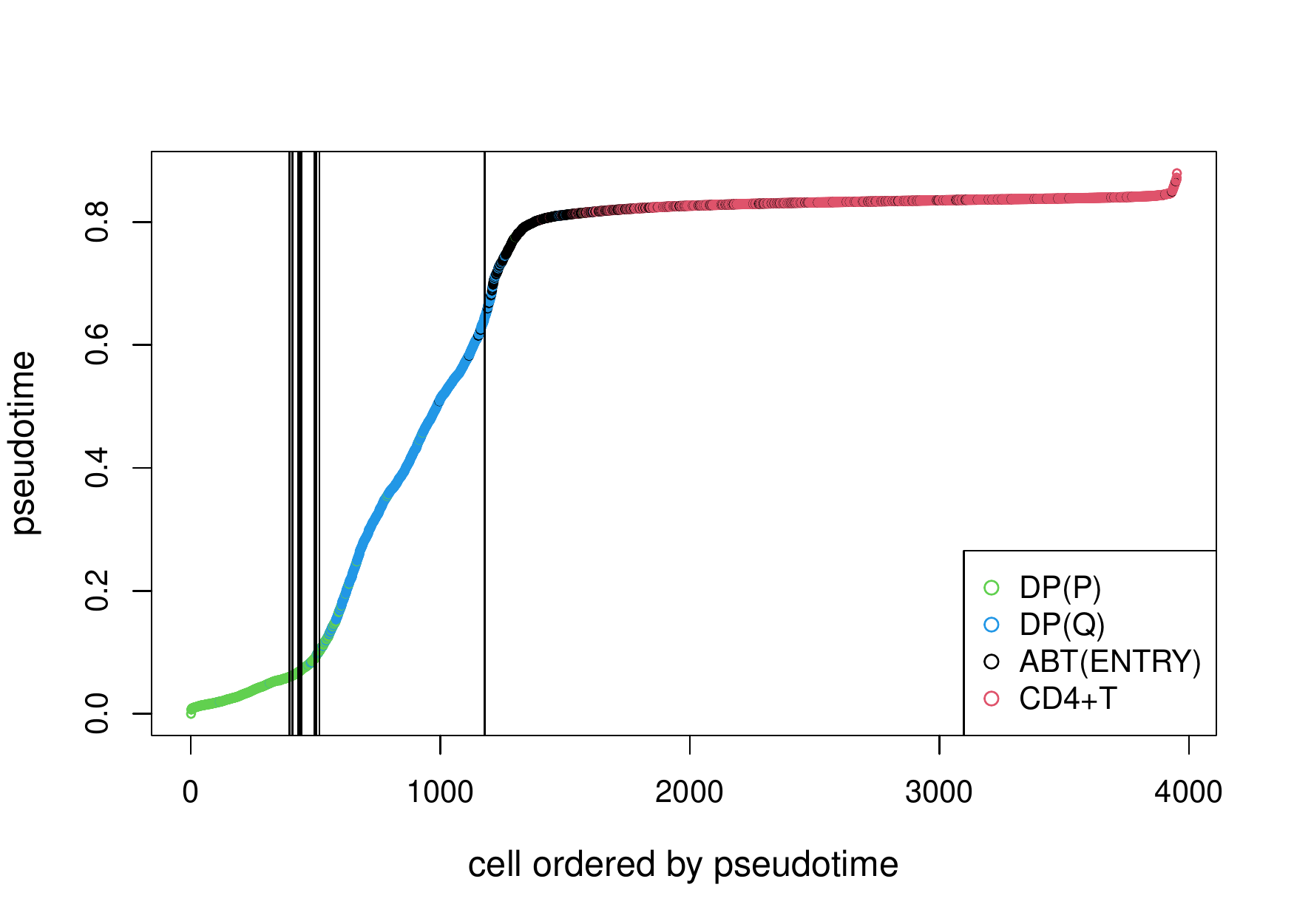}
\end{center}
\caption{\label{Fig:Pseudotime}Ordered pseudotime of cells in the real data example of Section~\ref{sec:realdata}. Each plot point represents a cell, coloured by its annotated cell type. Vertical lines corresponds to estimated changepoint locations of the most significant changes in the nodewise regression coefficients as described in Section~\ref{sec:realdata}.}
\end{figure}
\begin{table}
\begin{center}
{\small
\begin{tabular}{lcl}
\toprule
Gene & Changepoint & Top interacting partners\\
\midrule
TK1 & 495 & 
PTP4A1, KIF20B, CENPF, SPRY1, ZWINT, \\
& & CALCRL, RHPN1, LYPD3, COMMD3, LINC00672\\[3pt]
CKAP2L & 430 & 
TOGARAM1, DEPDC1B, AP001816.1, TUBA1C, \\
& & FANCI, IGF2BP2, REC8, TOB1, ZMAT3, STK11IP\\[3pt]
RAG2 & 1178 & 
SMPD3, AL365440.2, LZTFL1, AEBP1, HIST1H2BJ, \\
& & MTSS1, CD1C, CSNK2B, CASC15, SLC29A1\\[3pt]
TTK & 407 & 
UBE2S, KNL1, CDC20, TRAV19, DDIT3, AC023157.3,\\
& &  AC012360.3, IL23A, DNTT, USP53\\[3pt]
ARHGEF39 & 396 & 
HJURP, CD1A, SLC25A25, CCDC152, MBTD1, PON1, \\
& & H1F0, RNF125, APH1B, DDX3Y\\[3pt]
DEPDC1 & 444 & 
AL138899.1, USPL1, RIPK4, SERPINF1, EPHB6,\\
& &   MTSS1, SLC8A1-AS1, SLC5A3, HDAC4, SGK1\\[3pt]
SPC25 & 442 & 
ATF3, ITGAE, CDC42EP3, AC136475.5, EPS8, NINJ2,\\
& &   NDC80, ZNF280D, L3MBTL3, FBLN5\\[3pt]
GTSE1 & 437 & 
MID1IP1, HIST1H2AG, GADD45G, PSRC1, FBLN5,\\
& &  HIST1H2BN, CCDC171, ARMH1, SERTAD2, DDX3Y\\[3pt]
HMMR & 516 &
TAX1BP3, LAIR1, SERP2, LANCL2, MANEA-DT,\\
& &   TIMP1, CSRNP1, TSGA10, CKAP5, RGS16\\[3pt]
CENPA & 503 &  
TRBV7-3, SOCS1, FRMD4B, CDKN1A, FXYD2, \\
& & PTPN12, NLGN4X, NINL, KLRG1, SWT1\\
\bottomrule
\end{tabular}
}
\end{center}
\caption{\label{Tab:ListGenes}List of genes with most significant changes in the nodewise regression coefficients, together with their changepoint locations and top 10 interacting partners.}
\end{table}
\newpage
% \appendix

\section{Proof of main results}
\label{Sec:Proof}

\begin{proof}[Proof of Proposition~\ref{Prop:WtWz}]
	Define $\kappa_1 = \kappa_1(n,z,p) := z(n-z)(n-p)/n^3$, which under Condition~\ref{cond:regime} is $O(1)$.
	We decompose
	\begin{align*}
		W_t^\top W_z  - \frac{4tn\kappa_1}{z}I_p & = W_t^\top W_z - \frac{t}{z} W_z^\top W_z + \frac{t}{z} W_z^\top W_z - \frac{4tn\kappa_1}{z} I_p                                                                              \\
		% & = W_t^\top W_z - \frac{n-z}{n-t} W_t^\top W_t + \frac{n-z}{n-t} W_t^\top W_t -  \frac{n-z}{n-t}\mathbb{E} W_t^\top W_t. \\
		                                         & = {4t}  \biggl\{  \biggl( \frac{S_{0,t}}{t} -  \frac{S_{0,z}}{z}\biggr) S_{0,n}^{-1} S_{z,n}   + \frac{n}{z} \biggl( \frac{W_z^\top W_z}{4n} -  \kappa_1 I_p\biggr) \biggr\}.
	\end{align*}
	% where we employ Lemma~\ref{Lemma:WtWz} in the second equality. 

	Define $\Omega_\nu := \{ \|S_{0,n}^{-1} S_{z,n}\|_{\mathrm{op}} \| S_{0,z}/z\|_{\mathrm{op}} \le  \nu\}$.
	By Lemma~\ref{Lem:Net}, we have a $(1/2)$-net $\mathcal{N}$ of $B_0(\ell)$ with cardinality at most $(5ep/\ell)^\ell$ such that we have
	\begin{align*} % place holder psi
		% \mathbb{P} ( \sup_{v \in B_0(\ell)} v^\top (S_{0,t}/t - S_{0,z}/z)S_{0,n}^{-1} v \ge t4\kappa_1) \le 
		\mathbb{P}( & \sup_{u \in B_0(\ell)} |u^\top(S_{0,t}/t - S_{0,z}/z)S_{0,n}^{-1}S_{z,n}  v | \ge x)                                                                                                             \\
		            & \le \mathbb{E}[\mathbb{P}( 2 \sup_{u \in \mathcal{N} } |u^\top(S_{0,t}/t - S_{0,z}/z)S_{0,n}^{-1} v | \ge x \mid S_{0,z}, S_{z,n}) \mathbbb{1}_{\Omega_\nu}] + \mathbb{P}(\Omega_\nu^{\mathrm{c}}) \\
		            & \le 52 \biggl( \frac{5ep}{\ell}\biggr)^{\ell} \exp\{ - t^2 x^2 /(32 z \nu^2\})   + \mathbb{P}(\Omega_\nu^{\mathrm{c}}),                                                                          % \\
		% & \le \exp\{- t^2x^2/(4z\nu^2)\} + \exp\biggl( - \frac{n \bigl( 1- \sqrt{p/n} \bigr)^2}{8} \biggr) + 2 \exp(-p/8). 
	\end{align*}
	where the first inequality holds by that $\Omega_\nu$ is measurable with respect to the $\sigma$-algebra generated by $(S_{0,z}, S_{z,n})$ and the second by Lemma~\ref{lem:S0t-S0z} and a union bound.
	Define $\lambda_{\min}(A)$ and $\lambda_{\max}(A)$ for any generic symmetric matrix $A$ to be the smallest and largest eigenvalues of $A$, respectively.
	By \citet[Theorem~6.1]{Wainwright2019}, we have
	\begin{gather*}
		\mathbb{P}\biggl(\lambda_{\max}\biggl(\frac{S_{z,n}}{n-z}\biggr) \ge 2\bigl(1 + \sqrt{p/(n-z)}\bigr)^2\biggr) \le \exp\biggl( -\frac{(n-z) \bigl(1+\sqrt{p/(n-z)}\bigr)^2}{8} \biggr),  \\
		\mathbb{P}\biggl(\lambda_{\max}\biggl(\frac{S_{0,z}}{z}\biggr) \ge 2\bigl(1 + \sqrt{p/z}\bigr)^2\biggr) \le \exp\biggl( -\frac{z \bigl(1+\sqrt{p/z}\bigr)^2}{8} \biggr),  \\
		\mathbb{P}\biggl(\lambda_{\min}\biggl(\frac{S_{0,n}}{n}\biggl)  \le \frac{(1-  \sqrt{{p}/{n}})^2}{4} \biggr) \le \exp\biggl( -\frac{n ( 1- \sqrt{p/n})^2}{8}\biggr).
	\end{gather*}
	We define $\nu:= 16[(n-z)/n]  ( 1 + \sqrt{{p}/{z}} )^2 ( 1 + \sqrt{{p}/(n-z)} )^2  ( 1 - \sqrt{{p}/{n}} )^{-2}$.
	By a union bound, we arrive at
	\begin{equation*}
		\mathbb{P}(\Omega_\nu^{\mathrm{c}} ) =
		\mathbb{P}( \| S_{0,n}^{-1} S_{z,n} \|_{\mathrm{op}} \| S_{0,z}/z\|_{\mathrm{op}}  \le \nu )  \le \exp\biggl( - \frac{n \bigl( 1- \sqrt{p/n} \bigr)^2}{8} \biggr) + 2 \exp(-p/8).
	\end{equation*}
	Combining the above displays and setting $x := 16\nu  \sqrt{ \ell z \log (5ep/\ell)}/t$, we have by a union bound that %\teal{[check $p^{-6}$]}
	\begin{equation*}
		\begin{aligned}
			\mathbb{P}\biggl(  \sup_{u \in B_0(\ell)}  |u^\top(S_{0,t}/t - S_{0,z}/z)S_{0,n}^{-1}  v | \ge x \biggr)  \le p^{-7} + e^{- {n ( 1- \sqrt{p/n} )^2}/{8} } + 2 e^{-p/8}.
		\end{aligned}
	\end{equation*}
	Taking another union bound over $t\in[z]$, and by the Borel--Cantelli lemma, we have with probability 1, for all but finitely many $p$'s that
	\begin{equation}
		\label{Eq:Result1}
		\sup_{t\in[z]} \sup_{u\in B_0(\ell)} u^\top \bigl( W_t^\top W_z - (t/z) W_z^\top W_z \bigr) v \leq C'_{\tau,\eta}\sqrt{\ell p \log(ep/\ell)},
	\end{equation}
	for some constant $C'_{\tau,\eta} > 0$ that depends only on $\tau$ and $\eta$.
	By \citet[(22)]{gao2021twosample}, we have for all but finitely many $p$'s that
	\[
		\sup_{u\in B_0(\ell)} u^\top \biggl(\frac{W_z^\top W_z}{4n} - \kappa_1 I_p\biggr) u \le (4 + o(1)) \sqrt{\frac{(\ell+4) \log(10ep/\ell)}{n}} \bigl\{(\kappa_1 + \kappa_2)\sqrt{n/p} + \kappa_1\bigr\},
	\]
	where $\kappa_2>0$ is again a constant depending only on $\tau$ and $\eta$. This, together with the first claim of Lemma~\ref{Lem:kOpNorm}, implies that with probability 1, for all but finitely many $p$'s, we have
	\begin{equation}
		\label{Eq:Result2}
		\sup_{t\in[z]}\sup_{u\in B_0(\ell)} \frac{4tn}{z} u^\top \biggl(\frac{W_z^\top W_z}{4n} - \kappa_1 I_p\biggr)v \leq C''_{\tau,\eta}\sqrt{\ell p\log(ep/\ell)}.
	\end{equation}
	The conclusion follows by combining~\eqref{Eq:Result1} and~\eqref{Eq:Result2}, and the corresponding inequality for $t \in [n-1]\setminus [z]$.
\end{proof}

% \begin{proof}[Proof of Theorem~\ref{Thm:LocalisationRate}]
\begin{proof}[Proof of Theorem~\ref{thm:test}]
	% By symmetry, it suffices to consider the case that $\hat z \leq z$.  
  % We first show that $H_t$ can be viewed as a perturbation of $h_t$ by some small-order term. 
  Applying Proposition~\ref{Prop:WtWz}, we have with probability 1 that for all but finitely many $p$'s that
	\begin{equation}
		\sup_{t\in[z]} \sup_{u\in B_0(\ell)} u^\top \biggl(W_t^\top W_z - \frac{4t(n-z)(n-p)}{n^2} I_p\biggr) \frac{\theta}{\|\theta\|_2} \lesssim_{\tau,\eta} \sqrt{p\ell\log p}.
		\label{Eq:tmp1}
	\end{equation}
	Let $S := \mathrm{supp}(\theta)$.  Taking $\ell=k$ in~\eqref{Eq:tmp1}, we have
	\begin{align}
		\sup_{{\alpha}n\leq t\leq z} & \,\biggl\|\sqrt\frac{n}{t(n-t)}\biggl(\biggl(W_t^\top W_z - \frac{4t(n-z)(n-p)}{n^2} I_p\biggr)\theta\biggr)_S\biggr\|_2  \nonumber                                               \\
		                             & \leq \sup_{{\alpha} n\leq t \leq z} \sup_{u\in B_0(k)} u^\top \biggl(\sqrt\frac{n}{t(n-t)}W_t^\top W_z - \frac{4t^{1/2}(n-z)(n-p)}{n^{3/2}(n-t)^{1/2}} I_p\biggr) \theta\nonumber \\
		                             & \lesssim_{\tau,\eta,{\alpha}} \sqrt{k\log p}\|\theta\|_2.\label{Eq:SignalCoordinates}
	\end{align}

	By~\eqref{Eq:tmp1} and the second claim of Lemma~\ref{Lem:kOpNorm}, with probability 1 that for all but finitely many $p$'s,
	\begin{align*}
		\sup_{t\in[z]} \|(W_t^\top W_z \theta)_{S^c}\|_2 & = \sup_{t\in[z]}\, \biggl\|\biggl(\biggl\{W_t^\top W_z - \frac{4t(n-z)(n-p)}{n^2}I_p\biggr\} \theta\biggr)_{S^c}\biggr\|_2 \\
		                                                 & \lesssim_{\tau,\eta} p\sqrt{\log p}\|\theta\|_2.
	\end{align*}
	For any $Q\in\mathbb{O}^{p\times p}$, we have $X \stackrel{\mathrm{d}}{=} X Q^\top =: \tilde{X}$, and the latter has the corresponding sketching matrix $\tilde{A} = A Q^\top$ because $\tilde{A}^\top \tilde{A} = I_{n-p}$ and $\tilde{A}^\top \tilde{X} = Q A^\top X Q^\top =\mathbf{0}_{(n-p)\times p}$.
	As such, for any $Q\in\mathbb{O}^{p\times p}$ such that $Q\theta = \theta$, $Q(W_t^\top W_z\theta) = (QW_tQ^\top)^\top (QW_zQ^\top)   \theta \stackrel{\mathrm{d}}= W_t^\top W_z\theta$. In particular, $(W_t^\top W_z\theta)_{S^c}$ is spherically symmetric on $\mathbb{R}^{p-k}$. Hence, by Lemma~\ref{Lem:Sphere} (with a choice of $\delta=2p^{-4}$), with probability 1 we have for all but finitely many $p$'s that
	\begin{align}
		\sup_{{\alpha} n\leq t\leq z}\, \biggl\|\sqrt\frac{n}{t(n-t)}(W_t^\top W_z \theta)_{S^c}\biggr\|_\infty & \leq  \sup_{{\alpha} n\leq t\leq z} \|(W_t^\top W_z \theta)_{S^c}\|_2\sqrt\frac{n}{t(n-t)}\frac{4\sqrt{\log p}}{\sqrt{p-k}}\nonumber \\
		                                                                                                        & \lesssim_{\tau,\eta,{\alpha}} \|\theta\|_2 \log p\label{Eq:NoiseCoordinates}
	\end{align}
	Let $X = QT$ be the QR decomposition of $X$ and define $B_t:=Q_{(0,t]}^\top Q_{(0,t]}$. By Equation (16) in the proof of \citet[Proposition~8]{gao2021twosample}, there exists $C_{\eta}>0$, depending only on $\eta$ that for any fixed $p$ and $t$ and $j\in[p]$, with probability $1-p^{-4}$, we have
	\[
		\frac{1}{n}(W_t^\top W_t)_{j,j} \leq \frac{4\mathrm{tr}(B_t(I_p-B_t))}{p} + C_{\eta}\sqrt\frac{\log p}{n} \leq 1 + C_{\eta} \sqrt\frac{\log p}{n},
	\]
	where the final inequality follows from the fact that $\|B_t\|_{\mathrm{op}}\leq 1$. Taking union bounds over $j\in[p]$ and $t\in[z]$, and applying the Borel--Cantelli lemma, we have with probability 1 that for all but finitely many $p$'s,
	\[
		\sup_{\alpha n\leq t\leq z}\sup_{j\in[p]} \, \frac{n}{t(n-t)}(W_t^\top W_t)_{j,j}  \lesssim_{\tau,{\alpha}}1.
	\]
	Furthermore, applying the Gaussian tail bound followed by a union bound, we have with probability 1 for all but finitely many $p$'s that
	\begin{equation}
		\label{Eq:NoiseContribution}
		\sup_{\alpha n\leq t\leq z}\biggl\|\sqrt\frac{n}{t(n-t)}W_t^\top \xi\biggr\|_\infty \leq 4\sqrt{\log p} \sup_{\alpha n\leq t\leq z}\sup_{j \in[p]} \, \frac{n}{t(n-t)}(W_t^\top W_t)_{j,j}\lesssim_{\tau, {\alpha}}\sqrt{\log p}.
	\end{equation}

	We now work on the probability 1 event $\Omega$, such that~\eqref{Eq:SignalCoordinates}, \eqref{Eq:NoiseCoordinates}, \eqref{Eq:NoiseContribution} all hold for all but finitely many $p$'s.

	For sufficiently large $c_{\tau,\eta,{\alpha}}$, we have the right-hand side of~\eqref{Eq:NoiseCoordinates} and~\eqref{Eq:NoiseContribution} are both dominated by $\lambda/2$. Hence, on $\Omega$, we have for all $t\in[\alpha n,z]$ that
	\begin{align*}
		H_t & = \biggl\|\soft\biggl(\sqrt\frac{n}{t(n-t)}W_t^\top (W_z  \theta +\xi), \lambda\biggr)\biggr\|_2                                          \\
		    & =\biggl\|\soft\biggl(\sqrt\frac{n}{t(n-t)}(W_t^\top W_z)_{S,S} \theta_S + \sqrt\frac{n}{t(n-t)}(W_t^\top \xi)_S, \lambda\biggr)\biggr\|_2
	\end{align*}
	Writing $\tilde H_t := \bigl\|\sqrt\frac{n}{t(n-t)}(W_t^\top W_z)_{S,S} \theta_S\bigr\|_2$, we have by the triangle inequality and~\eqref{Eq:NoiseContribution} that on $\Omega$,
	\begin{equation}
		\label{Eq:HtHttilde}
		\sup_{t\in[\alpha n,z]} \bigl|H_t - \tilde H_t \bigr|\lesssim_{\tau,{\alpha}} \sqrt{k}\lambda + \sqrt{k\log p}.
	\end{equation}
	Recall the definition of $\gamma_t$ in \eqref{Eq:gamma_t} and write
	\[
		h_t:= \gamma_t \|\theta\|_2 =  \frac{4t^{1/2}(n-z)(n-p)}{n^{3/2}(n-t)^{1/2}}\|\theta\|_2= \frac{4(n-p)}{n}\sqrt\frac{t}{n(n-t)}(n-z)\|\theta\|_2.
	\]
	Then by~\eqref{Eq:SignalCoordinates}, we also have on $\Omega$ that
	\begin{equation}
		\label{Eq:Httildeht}
		\sup_{t\in[{\alpha}n, z]} |\tilde H_t - h_t | \lesssim_{\tau,\eta,{\alpha}} \sqrt{k\log p}\|\theta\|_2.
	\end{equation}
  \begin{comment}
	Since $\hat{z} = \argmax_{t \in [\alpha n, (1-\alpha)n]} H_t$ $H_{\hat z} \geq H_{z}$ and $h_z \geq h_{\hat z}$, we have
	\begin{align}
		h_z - h_{\hat z} & = (h_z- H_z) + (H_{\hat z}- h_{\hat z}) + (H_z - H_{\hat z}) \nonumber                                                       \\
		                 & \leq |H_z-\tilde H_z| + |\tilde H_z - h_z| + |H_{\hat z} - \tilde H_{\hat z}| + |\tilde H_{\hat z} - h_{\hat z}|\nonumber    \\
		                 & \lesssim_{\tau,\eta,{\alpha}} \sqrt{k\log p}(1+\|\theta\|_2) + \sqrt{k}\lambda \lesssim_{\tau,\eta,{\alpha}}\sqrt{k}\lambda.
		\label{Eq:hDiff}
	\end{align}
	By \citet[Lemma~7]{wang2018high}, we have for all $t \in [z - \min(z,n-z)/2, z]$ that
	\begin{equation}
		\label{Eq:hGrad}
		h_z - h_t \geq \frac{8}{3\sqrt{6}}\frac{n-p}{p}\frac{|z-t|}{\sqrt{\min(z,n-z)}}\|\theta\|_2.
	\end{equation}
	Thus, combining~\eqref{Eq:hDiff} and~\eqref{Eq:hGrad}, together with the fact that $t\mapsto h_t$ is increasing on $(0,z]$, we have on $\Omega$ that
	\[
		\frac{|\hat z - z|}{n} \lesssim_{\tau,\eta,{\alpha}} \frac{ \sqrt{k}\lambda\sqrt{\min(z,n-z)}}{n\|\theta\|_2}\frac{p}{n-p} \lesssim_{\eta} \frac{\sqrt{k}\lambda}{\sqrt{n}\|\theta\|_2},
	\]
	completing the proof.
\end{comment}
% \end{proof}

	% Let $(h_t)_{t\in[n-1]}$ be defined as in the proof of Theorem~\ref{Thm:LocalisationRate}. 
By a symmetric argument, both~\eqref{Eq:HtHttilde} and~\eqref{Eq:Httildeht} hold for $t \in [z, (1-\alpha)n]$ and consequently for $t \in [\alpha n , (1-\alpha)n]$ (with perhaps a slightly different constant).  Consequently, we have that with probability 1 for all but finitely many $p$,
	\[
		\sup_{t\in[\alpha n, (1-\alpha)n]} |H_t - h_t| \leq C_1\bigl( \sqrt{k}\lambda + \sqrt{k\log p}\|\theta\|_2\bigr),
	\]
	where $C_1$ depends only on $\tau$, $\eta$ and $\alpha$.

  If $\theta = 0$, then $h_t = 0$ for all $t$, and hence for sufficiently large $C_{\tau,\eta,\alpha}$, we have with probability $1$ for all but finitely many $p$'s that $\sup_{t\in[\alpha n, (1-\alpha)n]} |H_t| \leq T$ and thus the first conclusion holds.

	For the second conclusion, we have for some $C_2$ depending only on $\tau,\eta$ and $\alpha$ that
	\begin{align*}
		|H_z| \geq |h_z| - \sup_{t\in[\alpha n, (1-\alpha)n]} |H_t - h_t| & \geq C_2\sqrt{n}\|\theta\|_2 - C_1 (\sqrt{k}\lambda + \sqrt{k\log p}\|\theta\|_2) \\
		                                                                  & \geq C_2(1-o(1))\sqrt{n}\|\theta\|_2  - C_1c_{\tau,\eta,\alpha}\sqrt{k}\log p.
	\end{align*}
	The signal size condition on $\|\theta\|_2$ then ensures that $|H_z| \geq (C_2 c'_{\tau,\eta,\alpha}/2 - C_1 c_{\tau,\eta,\alpha}) \sqrt{k}\log p$. Hence, for sufficiently large $c'_{\tau,\eta,\alpha}$, we can ensure that with probability 1 for all but finitely many $p$'s, we have $\max_{t\in[\alpha n, (1-\alpha)n]} |H_t| \geq |H_z|\geq T$, completing the proof.
\end{proof}

\begin{proof}[Proof of Proposition~\ref{Prop:ProjAngle}]
	Define for each $t\in [\alpha n, (1-\alpha)n]$ a vector $\tilde Q_t\in\mathbb{R}^p$ such that $(\tilde Q_t)_{S^{\mathrm{c}}} := 0$ and
	\[
		(\tilde Q_t)_S := \sqrt\frac{n}{t(n-t)}(W_t^\top W_z)_{S,S}\theta_S.
	\]
	By~\eqref{Eq:NoiseCoordinates} and~\eqref{Eq:NoiseContribution} and their symmetric results for $t\in[z, (1-\alpha)n]$, we have with probability 1 for all but finitely many $p$'s that
	\begin{equation}
		\label{Eq:QtQtildet}
		\sup_{\alpha n \leq t\leq (1-\alpha)n} \|Q_t - \tilde Q_t\|_\infty \lesssim_{\tau,\eta,{\alpha}} \max(1,\|\theta\|_2)\log p
	\end{equation}
	Recall the definition of $\gamma = (\gamma_t)_{t\in[n-1]}$ in~\eqref{Eq:gamma_t}. Applying~\eqref{Eq:tmp1} and its symmetric result for $t\geq z$ with $\ell=1$, we have with probability $1$ for all but finitely many $p$'s that
	\begin{equation}
		\label{Eq:Qtildetqt}
		\sup_{t\in [n-1]} \|\tilde Q_t -  \theta\gamma_t \|_\infty \lesssim_{\tau,\eta,{\alpha}} \|\theta\|_2 \sqrt{\log p}.
	\end{equation}
	By symmetry, both~\eqref{Eq:QtQtildet} and~\eqref{Eq:Qtildetqt} are still valid when we replace the supremum over $t\in[z, (1-{\alpha})n]$ instead. Define $\tilde Q := (\tilde Q_{\lceil \alpha n\rceil },\dots, \tilde Q_{\lfloor (1-\alpha)n\rfloor})^\top$ and $\gamma := (\gamma_t)_{t\in [\alpha n, (1-\alpha)n]}$. We then have
	\begin{equation}
		\label{Eq:Qtqt}
		\|Q - \theta\gamma^\top\|_\infty \leq \|Q - \tilde Q\|_\infty+\|\tilde Q - \theta\gamma^\top\|_\infty  \lesssim_{\tau,\eta,{\alpha}}\max(1,\|\theta\|_2)\log p.
	\end{equation}
	By \citet[Propositions~2 and~4 in the online supplement]{wang2018high}, for $c_{\tau,\eta,{\alpha}}$ large enough such that $\lambda \geq \|Q - \theta\gamma^\top\|_\infty$, we have
	\[
		\sin\angle(\hat v, \theta) \lesssim_{\tau,\eta,{\alpha}} \frac{\lambda\sqrt{kn}}{\|\theta\|_2\|\gamma\|_2}, %\asymp_{\tau,\eta,{\alpha}} \frac{\lambda\sqrt{k}}{\sqrt{n}\|\theta\|_2},
	\]
	whence the desired form follows by noting $\| \gamma\|_2 \asymp_{\tau, \eta, {\alpha}} n$ by Condition~\ref{cond:regime}.
\end{proof}

\begin{proof}[Proof of Theorem~\ref{Thm:LocalisationRate2}]
	To simplify exposition, all statements should be interpreted as valid with probability 1 for all but finitely many $p$'s.
  % Note that $t$ is always the temporal index such that $t\in[n-1]$. 
	Write $v:= \theta/\|\theta\|_2$ for simplicity, and note that $\|v\|_0 = \|\theta\|_0\leq k$.  Since the estimator $\hat z$ is unchanged if we replace $\hat v$ by $-\hat v$ in Algorithm~\ref{Algo:Estimation2}, we may assume without loss of generality that $\rho:=\hat v^\top v \geq 0$. 
  Our strategy is to view $(\hat v^\top Q_t : t\in[\alpha n, (1-\alpha)n]) $ as a perturbation of a multiple of $(\gamma_t: t\in[\alpha n, (1-\alpha)n])$, which is maximized at $z$. By \eqref{Eq:Qtqt}, we may choose $c_{\tau,\eta,\alpha}$ large enough such that $\|Q-\theta\gamma^\top\|_\infty \leq \lambda$. By Proposition~\ref{Prop:ProjAngle}, we then have
	\begin{align*}
    \max_{t\in[\alpha n, (1-\alpha)n]} \bigl|\hat v^\top (Q_t - \theta\gamma_t)\bigr| & \leq  \|\hat v\|_1\| Q - \theta\gamma^\top\|_{\max} \leq (\|v\|_1 + \|\hat v - v\|_1)\lambda \\
		                                                                                     & \lesssim_{\tau,\eta,{\alpha}} (\sqrt{k} + \sqrt{p}\|\hat v - v\|_2) \lambda.
	\end{align*}
	From Proposition~\ref{Prop:ProjAngle}, there exists $C_{\tau,\eta,{\alpha}}'>0$, depending only on $\tau,\eta,{\alpha}$, such that
  \begin{equation}
    \label{Eq:Thm11tmp1.5a}
		\|\hat v - v\|_2 \leq 2 \sin\angle (\hat v, \theta) \leq C_{\tau,\eta,{\alpha}}'\frac{\lambda\sqrt{k}}{ \sqrt{n}\|\theta\|_2},
  \end{equation}
	which implies that
	\begin{equation}
		\max_{t\in[\alpha n, (1-\alpha)n]} \bigl|\hat v^\top (Q_t - \theta\gamma_t)\bigr| \lesssim_{\tau,\eta,\alpha} \frac{\lambda^2\sqrt{k}}{\|\theta\|_2}. \label{Eq:Thm11tmp1}
	\end{equation}
	We may further assume that
	\begin{equation}
		\label{Eq:Thm11tmp1.5}
    \frac{\lambda\sqrt{k}}{\sqrt{n}\|\theta\|_2}\leq \frac{\lambda^2\sqrt{k}}{\sqrt{n}\|\theta\|_2^2}\leq \frac{1}{ C'_{\tau,\eta,{\alpha}}}
	\end{equation}
  for all $p$'s, since for $p$ where this is not satisfied the result is trivially true. Then, $\sin\angle(\hat v, \theta)\leq 1/2$ and thus $\rho = \{1-\sin^2\angle(\hat v, \theta)\}^{1/2} \geq 1/2$. Consequently, from~\eqref{Eq:Thm11tmp1} and \eqref{Eq:Thm11tmp1.5}, increasing $C'_{\tau,\eta,\alpha}$ if necessary, we have
	\[
		\hat v^\top \theta \gamma_z = \rho\|\theta\|_2\frac{4z^{1/2}(n-z)^{1/2}(n-p)}{n^{3/2}} \geq 2 \max_{t\in[\alpha n, (1-\alpha)n]} \bigl|\hat v^\top (Q_t - \theta\gamma_t)\bigr|,
	\]
	which implies in particular that $\hat v^\top Q_{\hat z} > 0$.  Now, since $z = \argmax_{t\in[n-1]} \gamma_t$ and $\hat z = \argmax_{t\in[\alpha n, (1-\alpha)n]} \hat v^\top Q_t$, we have from~\eqref{Eq:Thm11tmp1} that
	\begin{equation}
		\label{Eq:Thm11tmp3}
		\hat v^\top \theta\gamma_z - \hat v^\top \theta\gamma_{\hat z} \leq  \hat v^\top Q_z - \hat v^\top Q_{\hat z} + 2 \max_{t\in[\alpha n, (1-\alpha)n]} \bigl|\hat v^\top (Q_t - \theta\gamma_t)\bigr| \lesssim_{\tau,\eta,{\alpha}} \frac{\lambda^2\sqrt{k}}{\|\theta\|_2}.
	\end{equation}
	On the other hand, by~\citet[Lemma~7]{wang2018high}, we have
	\begin{equation}
		\label{Eq:Thm11tmp2}
		\inf_{t\in[z-\min\{z,n-z\}/2,z+\min\{z,n-z\}/2]} \frac{\hat v^\top \theta\gamma_z - \hat v^\top \theta\gamma_{t}}{|z-t|}\gtrsim_{\tau,\eta} \|\theta\|_2\sqrt{n}.
	\end{equation}
	We arrive at the conclusion by combining~\eqref{Eq:Thm11tmp3} and~\eqref{Eq:Thm11tmp2}.
	% \[
	% |\hat z - z| \lesssim_{\tau,\eta,{\alpha}} \frac{\lambda^2\sqrt{k}}{\sqrt{n}\|\theta\|_2^2},
	% \]
	% as desired.
\end{proof}

\begin{proof}[Proof of Theorem~\ref{Thm:LocalisationRate3}]
	As in the proof of Theorem~\ref{Thm:LocalisationRate2}, all statements are valid with probability 1 for all but finitely many $p$'s, and we may assume without loss of generality that $\hat v^\top v \geq 0$.  
  Let $\tilde{Q}_t$ be as in the proof of Proposition~\ref{Prop:ProjAngle}. 
  The main difference to the proof of Theorem~\ref{Thm:LocalisationRate2} will be an improvement of~\eqref{Eq:Thm11tmp1} using the independence between $\hat v$ and $Q_t- \theta\gamma_t$.  Specifically, since
	\begin{align*}
    % \sqrt{\frac{t(n-t)}{n}}
		\{t(n-t)/n\}^{1/2}\hat v^\top(Q_t-\tilde Q_t) & = \hat v^\top W_t^\top (W_z\theta + \xi) - \hat v_S^\top (W_t^\top W_z)_{S,S}\theta_S \\
		                                              & =\hat v_{S^c}^\top (W_t^\top W_z\theta)_{S^c} +\hat v^\top W_t^\top \xi,
	\end{align*}
	we have that
	\begin{align}
		\max_{t\in[\alpha n,(1-\alpha)n]} \bigl|\hat v^\top (Q_t-\theta\gamma_t)\bigr| & \leq \max_{t\in[\alpha n,(1-\alpha)n]} \biggl|\hat v^\top (\tilde Q_t - \theta\gamma_t) + \sqrt\frac{n}{t(n-t)}\hat v^\top(W_t^\top W_z\theta)_{S^c}\biggr| \nonumber \\
		                                                                                  & \qquad + \max_{t\in[\alpha n,(1-\alpha)n]}\biggl| \sqrt\frac{n}{t(n-t)} \hat v^\top W_t^\top \xi\biggr|\label{Eq:Thm12tmp0}
	\end{align}
	We control the two terms on the right-hand side of~\eqref{Eq:Thm12tmp0} separately. 
  By~\eqref{Eq:Qtildetqt}, ~\eqref{Eq:NoiseCoordinates}, the Cauchy--Schwarz inequality and finally~\eqref{Eq:Thm11tmp1.5a}, the first term from the above display~\eqref{Eq:Thm12tmp0} is bounded by 
	\begin{align}
    \|\hat v\|_1 & \biggl(\|\tilde Q-\theta\gamma^\top\|_{\max} + \max_{t\in[\alpha n,(1-\alpha)n]}\biggl\|\sqrt\frac{n}{t(n-t)}\| \hat v \|_2 \|(W_t^\top W_z\theta)_{S^c}\|_2 \biggr\|_\infty\biggr)\nonumber \\
    & \lesssim_{\tau,\eta,{\alpha}}(\sqrt{k}\|v\|_2 + \sqrt{p}\|\hat v - v\|_2) \|\theta\|_2 \log p \lesssim_{\tau,\eta,{\alpha}} \lambda\sqrt{k}\log p.       
    \label{Eq:Thm12tmp1}
	\end{align}
	On the other hand, since $\hat v$, $W_t$ and $\xi$ are mutually independent, we have $\hat v^\top W_t^\top \xi \mid (\hat v, W_t) \sim N(0, \|W_t \hat v^\top \|_2^2)$. By \citet[(22)]{gao2021twosample}, we have
	\[
		\sup_{t\in[\alpha n, (1-\alpha)n]} \biggl\|\frac{1}{n}W_t^\top W_t\biggr\|_{\mathrm{op}} \lesssim_{\tau,\eta,{\alpha}} 1.
	\]
	Hence, by Gaussian tail bounds followed by a union bound, we have
	\begin{equation}
		\label{Eq:Thm12tmp2}
		\max_{t\in[\alpha n,(1-\alpha)n]}\biggl| \sqrt\frac{n}{t(n-t)} \hat v^\top W_t^\top \xi\biggr| \lesssim_{\tau,\eta,{\alpha}} \sqrt{\log p}.
	\end{equation}
	Substituting~\eqref{Eq:Thm12tmp1} and~\eqref{Eq:Thm12tmp2} into~\eqref{Eq:Thm12tmp0}, we have
	\begin{equation}
		\label{Eq:Thm12tmp3}
		\max_{t\in[\alpha n,(1-\alpha)n]} \bigl|\hat v^\top (Q_t-\theta\gamma_t)\bigr|\lesssim_{\tau,\eta,{\alpha}} \lambda\sqrt{k}\log p.
	\end{equation}
	Following the same argument as in the proof of Theorem~\ref{Thm:LocalisationRate2}, with~\eqref{Eq:Thm12tmp3} replacing~\eqref{Eq:Thm11tmp1}, we arrive at the following counterpart to~\eqref{Eq:Thm11tmp3}:
	\begin{equation}
		\label{Eq:Thm12tmp4}
		\hat v^\top \theta\gamma_z - \hat v^\top \theta\gamma_{\hat z}  \leq \hat v^\top Q_z - \hat v^\top Q_{\hat z} + 2 \max_{t\in[\alpha n, (1-\alpha)n]} \bigl|\hat v^\top (Q_t - \theta\gamma_t)\bigr| \lesssim_{\tau,\eta,{\alpha}} \lambda\sqrt{k}\log p.
	\end{equation}
	Combining~\eqref{Eq:Thm12tmp4} with~\eqref{Eq:Thm11tmp2}, the proof is complete.
	% we see that 
	% \[
	% |\hat z - z|\lesssim_{\tau,\eta,{\alpha}} \frac{\lambda\sqrt{k}\log p}{\sqrt{n}\|\theta\|_2},
	% \]
	% as desired.
\end{proof}

\begin{proof}[Proof of Theorem~\ref{Thm:Multiple}]
First observe that since
\begin{equation*}
  \mathbb{P}(\Omega_0^c) \le \sum_{i=1}^\nu \prod_{m=1}^M \Bigl( 1  - \mathbb{P}\bigl( (s_m, e_m] \in \mathcal{I}_i \bigr)\Bigr)  \le \nu \bigl( 1 - \Delta_\tau^2/36\bigr)^M \le \nu \exp(-\Delta_\tau^2 M/36),
\end{equation*}
the second conclusion follows immediately from the first one. To establish the first conclusion, we henceforth work on the event $\Omega_0 \cap \Omega_1 \cap \Omega_2\cap\Omega_3$. 

For $0 \le s < e \le n$, we define the following sets
\begin{align*}
  \mathcal{M}^{(s,e]} & := \{ m\in[M]: s\le s_m < e_m \le e\}, \\
  \mathcal{R}^{(s,e]} & := \{ m\in \mathcal{M}^{(s,e]}: \psi(D_{(s_m + n\varpi,e_m - n\varpi]}) =1  \},\\
  \mathcal{Z}^{(s,e]} & : = \{ i \in [\nu]: z_i \in (s, e]\},\\
  \mathcal{Z}_{\mathrm{good}}^{(s,e]} & : = \{ i \in [\nu]: z_i \in (s, e], \min\{ z_i - s, e-z_i\} \ge n\Delta_\tau /2\}, \\
  \mathcal{Z}_{\mathrm{bad}}^{(s,e]} & : = \{ i \in [\nu]: z_i \in (s, e], \min\{ z_i - s, e-z_i\} < n\phi_i \}.
\end{align*}
Note that on the event $\Omega_0$, we can associate each true changepoint $z_i$ with an $m_i\in [M]$ such that $(s_{m_i}, e_{m_i}] \in \mathcal{I}_i$.
On $\Omega_2$, we have 
\begin{equation}
  \bigl\{ m_i : i\in \mathcal{Z}_{\mathrm{good}}^{(s,e]}\bigr\} \subseteq \mathcal{R}^{(s,e]}.
  \label{Eq:mi-Rse}
\end{equation}
Recall the assumption $\phi < \varpi$. For any $(s_0, e_0] \subset (s, e]$ such that $(s_0, e_0] \cap \{ z_i: i \in [\nu], z_i \in (s,e]\} \subseteq \mathcal{Z}_{\mathrm{bad}}^{(s,e]}$, we have  $(s_0, e_0] \in \mathcal{I}_0$ and hence $\psi(D_{(s_0+n\varpi,e_0-n\varpi]})=0$ on $\Omega_1$. 
% \fn{Hole here.  And $\mathcal{I}_0$ should be redefined to make sense of $(s_0+n\varpi, e_0 - n \varpi]\in \mathcal{I}_0$.}

For any set of changepoints $\hat Z$, we can partition the original timeline $(0,n]$ into $|\hat Z| +1$ segments, which we call segments induced by $\hat Z$. We now prove by induction that as we update $\hat Z$ throughout the recursion of Algorithm~\ref{Algo:Multiple}, for any $(s,e]$ induced by $\hat Z$, we have $\mathcal{Z}^{(s,e]} = \mathcal{Z}^{(s,e]}_{\mathrm{good}} \cup \mathcal{Z}^{(s,e]}_{\mathrm{bad}}$. The base case is trivially true as at the beginning of the algorithm, $\hat Z = \emptyset$, so the only segment induced is $(0,n]$ so $\mathcal{Z}^{(s,e]} = \mathcal{Z}^{(s,e]}_{\mathrm{good}}$ and $ \mathcal{Z}^{(s,e]}_{\mathrm{bad}}=\emptyset$ by our assumption that $z_i-z_{i-1}\geq n\Delta_\tau$ for all $i\in[\nu+1]$. 

Now assuming that the inductive hypothesis holds at some iteration of the recursion in Algorithm~\ref{Algo:Multiple}. We show that the claimed statement still holds if a new changepoint is estimated. Let $\hat Z$ be the set of changepoints identified before this new changepoint, say $\hat z_*$, is added to it. We necessarily have $\hat z_* = \hat z(D_{(s',e']}) + s'$ for some $(s',e'] \in \mathcal{Z}^{(s,e]}$ where $(s,e]$ is induced by $\hat Z$. From the inductive hypothesis, we have $\mathcal{Z}^{(s,e]} = \mathcal{Z}^{(s,e]}_{\mathrm{good}} \cup \mathcal{Z}^{(s,e]}_{\mathrm{bad}}$ and $\mathcal{Z}^{(s,e]}_{\mathrm{good}}$ is necessarily non-empty, for otherwise all changepoints in $(s,e]$ are within a distance of  $n\phi$ to the boundary of the interval, which together with the fact that $\phi < \varpi$, implies that  $\mathcal{M}^{(s,e]}\cap \mathcal{R}^{(s,e]} = \emptyset$, contradicting the fact that a new changepoint is identified.   Thus, there exists some $i' \in \mathcal{Z}^{(s,e]}_{\mathrm{good}}$, which by~\eqref{Eq:mi-Rse} means that $m_{i'}\in \mathcal{R}^{(s,e]}$. By the definition of $m_0$ in Line~\ref{Line7} of Algorithm~\ref{Algo:Multiple}, we have $e_{m_0}-s_{m_0} \leq e_{m_{i'}} - s_{m_{i'}} \leq n\Delta_\tau$. Thus, from the condition of the theorem, we have that $(s_{m_0}, e_{m_0}]$ contains at most one changepoint. If $(s_{m_0}, e_{m_0}]\cap \{z_i:i\in\mathcal{Z}^{(s,e]}\} = \emptyset$, then on $\Omega_1$, $\psi(D_{(s_{m_0}+n\varpi, e_{m_0}-n\varpi]})=0$, contradicting $m_0\in \mathcal{R}^{(s,e]}$. If $(s_{m_0}, e_{m_0}]$ contains a single changepoint $z_i$ for $i\in\mathcal{Z}^{(s,e]}_{\mathrm{bad}}$, then since $\phi < \varpi$, we again have on $\Omega_1$ that $\psi(D_{(s_{m_0}+n\varpi, e_{m_0}-n\varpi]})=0$, a contradiction. By the inductive hypothesis, this implies that $(s_{m_0}, e_{m_0}]$ contains exactly one true change-point $z_{i_0}$ for some $i_0 \in \mathcal{Z}^{(s,e]}_{\mathrm{good}}$ and that $\min\{e_{m_0}-z_{i_0}, z_{i_0} - s_{m_0}\} \geq n\varpi$. Hence, $(s_{m_0}, e_{m_0}] \in \tilde{\mathcal{I}}_{i_0}$, and thus on $\Omega_3$, we have $|\hat z_* - z_{i_0}| \leq n\phi_{i_0}$. 

We finally check that the two new segments induced by $\hat Z \cup \{\hat z_*\}$, say $(\hat z_{\mathrm{left}}, \hat z_*]$ and $(\hat z_*, \hat z_{\mathrm{right}}]$ for $\hat z_{\mathrm{left}} < \hat z_* < \hat z_{\mathrm{right}}$, still satisfy the inductive hypothesis. By symmetry, we may assume without loss of generality that $\hat z_* \leq z_{i_0}$. Since $|z_{i_0} - \hat z_*| \leq n\phi$, we have $i_0\in\mathcal{Z}^{(\hat z_{\mathrm{left}}, \hat z_*]}_{\mathrm{bad}}$. For any $i\in \mathcal{Z}^{(\hat z_{\mathrm{left}}, \hat z_*]}$ such that $i < i_0$, we have $\hat z_* - z_i \geq z_{i_0} - z_i \geq n\Delta_\tau$, and thus $z_i \in \mathcal{Z}^{(\hat z_{\mathrm{left}}, \hat z_*]}_{\mathrm{good}}\cup \mathcal{Z}^{(\hat z_{\mathrm{left}}, \hat z_*]}_{\mathrm{bad}}$ by the inductive hypothesis, consequently, $\mathcal{Z}^{(\hat z_{\mathrm{left}}, \hat z_*]} = \mathcal{Z}^{(\hat z_{\mathrm{left}}, \hat z_*]}_{\mathrm{good}}\cup \mathcal{Z}^{(\hat z_{\mathrm{left}}, \hat z_*]}_{\mathrm{bad}}$. Similarly, for $i\in \mathcal{Z}^{(\hat z_*, \hat z_{\mathrm{right}}]}$, we have $i > i_0$ and $z_i - \hat z_* \geq z_i - z_{i_0} - (\hat z_* - z_{i_0}) \geq n\Delta_\tau - \phi \geq n\Delta_\tau/2$. Again by the inductive hypothesis, $\mathcal{Z}^{(\hat z_*, \hat z_{\mathrm{right}}]} = \mathcal{Z}^{(\hat z_*, \hat z_{\mathrm{right}}]}_{\mathrm{good}}\cup \mathcal{Z}^{(\hat z_*, \hat z_{\mathrm{right}}]}_{\mathrm{bad}}$. This completes the induction. 

As a consequence of the above inductive argument, we have shown that a new changepoint will be identified in Algorithm~\ref{Algo:Multiple} if and only if $(s,e] \cap \{z_i:i\in\mathcal{Z}^{(s,e]}_{\mathrm{good}}\} \neq \emptyset$. Thus, from the inductive claim, at the end of the recursion, each changepoint, say $z_i$, must be less than $n\phi_i$ away from one of the end points of the segments induced by $\hat{Z}$.   This, as well as the assumption that $z_i-z_{i-1}\geq n\Delta_\tau$ for all $i\in[\nu+1]$, means that $|\hat Z| = \nu$ and that $|\hat z_i - z_i|\leq n\phi_i$ as desired. 
\end{proof}
\begin{proof}[Proof of Corollary~\ref{Cor:Multiple}]
  First we write $\phi_i = \frac{C' \lambda \sqrt{k} \log p}{\sqrt{n} \|\theta^{(i)}\|_2}$ and define $\Omega_0$, $\Omega_1$, $\Omega_2$ and $\Omega_3$ as in Theorem~\ref{Thm:Multiple}. Observe that $\Omega_1$, $\Omega_2$ and $\Omega_3$ has implicit dependence on $p$, whereas in the specific coupling~\eqref{Eq:IntervalCoupling} considered in this theorem, $\Omega_0$ does not vary with $p$. We have from the proof of Theorem~\ref{Thm:Multiple} that $\mathbb{P}(\Omega_0) \geq 1-\nu e^{-\Delta_\tau^2M/36}$. Hence, it suffices show that on $\Omega_0$, we have for all but finitely many $p$'s that $\Omega_1$, $\Omega_2$ and $\Omega_3$ hold simultaneously.  We keep in mind that $M$ is fixed and finite, and for the rest of the proof, we condition on a realization of $(\tilde{s}_m, \tilde{e}_m)_{m=1}^M$ as in \eqref{Eq:IntervalCoupling} such that $\Omega_0$ holds. 

Let $\mathcal{I}_0$, $\mathcal{I}_i$ and $\tilde{\mathcal{I}}_i$ be defined as in Theorem~\ref{Thm:Multiple}.  We first establish $\Omega_1$ and $\Omega_2$.  For any interval $(s,e]$ with $e-s \le p$, $\psi=0$ by definition. For every $(s,e] \in  \cup_{0 \le i\le \nu} \, \mathcal{I}_i$ whose length is longer than $p$, the fixed-ratio regime Condition~\ref{cond:regime} is true by the generating mechanism of the intervals in \eqref{Eq:IntervalCoupling}, and it is straightforward to verify that Theorem~\ref{thm:test} applies.  As a result, there exist $c, c', C$, which may depend on $(s, e]$, such that the conclusion of Theorem~\ref{thm:test} holds for each $(s,e]\in \cup_{0 \le i \le \nu} \,\mathcal{I}_i$ with $e - s >p$.  Inspecting the proof of Theorem~\ref{thm:test} shows that we can take the maximum of all such $c, c', C$'s so that the conclusion of Theorem~\ref{thm:test} holds for all intervals in $\cup_{0\le i \le \nu}\, \mathcal{I}_i$ with length longer than $p$.  As such, we have, for all but finitely many $p$'s, $\Omega_1$ and $\Omega_2$ holds.

Now we turn to $\Omega_3$.  Again, by reasoning similar to the above, we see the conditions of Theorem~\ref{Thm:LocalisationRate3} hold for each $(s,e] \in \tilde{\mathcal{I}}_i$, and for the above-mentioned specific choices of $c$, which may depend on $(s, e]$, the conclusion of Theorem~\ref{Thm:LocalisationRate3} holds for each $(s,e]\in \tilde{\mathcal{I}}_i$.  We can again take the maximum of all such $c$'s so that for all intervals in $\tilde{\mathcal{I}}_i$ for all $i\in [\nu]$, the conclusion of Theorem~\ref{Thm:LocalisationRate3} holds, i.e.,
\begin{equation*}
  \frac{\hat{z}(D_{(s,e]}) - (z_i -s) }{n} \le C'_{(s,e]} \frac{\lambda \sqrt{k} \log p}{\sqrt{n} \|\theta\|_2}.
\end{equation*}
Setting, e.g., $C' = \max_{i\in[\nu]} \max_{(s,e]\in \tilde{\mathcal{I}}_i} C'_{(s,e]}$, we have, for all but finitely many $p$'s, $\Omega_3$ holds.  Invoking Theorem~\ref{Thm:Multiple} completes the proof.
\end{proof}

\section{Ancillary results}
\label{Sec:Ancillary}
We collect here the ancillary results and their proofs.
% \teal{\sout{Recall the definitions of the operator norm and the $k$-operator norm of matrices.}}
% \fn{In the supplement: We adopt the same notations as in the main text.}
\begin{lemma}
	\label{Lem:kOpNorm}
	Fix $A\in\mathbb{R}^{p\times p}$ and $k\in[p]$. The following are true.
	\begin{enumerate}
		\item If $A$ is symmetric, then $\sup_{u,v\in B_0(k)} u^\top A v \leq \sup_{v \in B_0(2k)} v^\top A v$.
		\item $\sup_{v\in B_0(k)} \|Av\|_2 \leq \sqrt{p/k} \sup_{u,w \in B_0(k)} u^\top A w$.
	\end{enumerate}
\end{lemma}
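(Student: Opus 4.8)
The plan is to reduce the bilinear $k$-sparse form to a quadratic $2k$-sparse form by polarisation. Fix $u,v\in B_0(k)$ and set $p_{\pm}:=u\pm v$; since $\supp(p_{\pm})\subseteq\supp(u)\cup\supp(v)$ we have $\|p_{\pm}\|_0\le 2k$, while the parallelogram law gives $\|p_+\|_2^2+\|p_-\|_2^2=2(\|u\|_2^2+\|v\|_2^2)\le 4$. Symmetry of $A$ yields $v^\top Au=u^\top Av$, hence the polarisation identity $u^\top Av=\tfrac14\bigl(p_+^\top Ap_+-p_-^\top Ap_-\bigr)$. Writing $M:=\sup_{w\in B_0(2k)}|w^\top Aw|$ and normalising each non-zero $p_{\pm}$ to a unit vector that still lies in $B_0(2k)$, we obtain $|p_{\pm}^\top Ap_{\pm}|\le\|p_{\pm}\|_2^2\,M$, so
\[
  u^\top Av\le\tfrac14\bigl(|p_+^\top Ap_+|+|p_-^\top Ap_-|\bigr)\le\tfrac14\bigl(\|p_+\|_2^2+\|p_-\|_2^2\bigr)M\le M .
\]
Taking the supremum over $u,v\in B_0(k)$ gives the claim (what the argument actually delivers is this bound with $\sup_{w\in B_0(2k)}|w^\top Aw|$ on the right-hand side, which is the quantity controlled in the applications via estimates that hold simultaneously for $A$ and $-A$).

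\textbf{Part 2.} Here the key fact is that the $k$ largest-magnitude coordinates of any vector already carry at least a $\sqrt{k/p}$ fraction of its Euclidean length. Fix $v\in B_0(k)$ and put $x:=Av$, assuming $x\neq 0$ (the claim being trivial otherwise). Let $T\subseteq[p]$ index $k$ coordinates of $x$ of largest absolute value; then the average of the largest $k$ entries of $(x_i^2)_{i\in[p]}$ dominates the overall average, i.e.\ $\sum_{i\in T}x_i^2\ge\tfrac{k}{p}\sum_{i\in[p]}x_i^2=\tfrac{k}{p}\|x\|_2^2$. Let $u\in B_0(k)$ be the unit vector supported on $T$ and proportional to $x$ there; then $u^\top Av=u^\top x=\bigl(\sum_{i\in T}x_i^2\bigr)^{1/2}\ge\sqrt{k/p}\,\|Av\|_2$, so $\|Av\|_2\le\sqrt{p/k}\,u^\top Av\le\sqrt{p/k}\sup_{u,w\in B_0(k)}u^\top Aw$. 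Taking the supremum over $v\in B_0(k)$ finishes the proof.

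\textbf{Where the difficulty lies.} Neither part is genuinely hard once the right test vectors are chosen; the only care needed is the bookkeeping of sparsity levels and $\ell_2$-norms in the polarisation step of part~1 — this is precisely what forces the passage from $k$ to $2k$ and what makes the symmetry hypothesis essential (for skew-symmetric $A$ every quadratic form $w^\top Aw$ vanishes, so no such reduction to a quadratic form can hold) — and recognising the elementary largest-$k$-coordinates averaging inequality that drives part~2.
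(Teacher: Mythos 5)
Your proof is correct, but both parts take genuinely different routes from the paper's. For part~1 the paper fixes the supports $S,T$ of $u,v$ and argues $u^\top A v \le \|A_{S,T}\|_{\mathrm{op}} \le \|A_{S\cup T,S\cup T}\|_{\mathrm{op}}$, then reads the operator norm of the $2k\times 2k$ principal submatrix as a sparse quadratic form; you instead polarise, $u^\top Av=\tfrac14\bigl(p_+^\top Ap_+-p_-^\top Ap_-\bigr)$ with $p_\pm=u\pm v$, and the parallelogram bookkeeping $\|p_+\|_2^2+\|p_-\|_2^2\le4$ plays the role of the paper's submatrix monotonicity of the operator norm. Your caveat that polarisation only controls the bilinear form by $\sup_{w\in B_0(2k)}|w^\top Aw|$ is not a defect relative to the paper: the paper's own chain passes through $\|A_{S\cup T,S\cup T}\|_{\mathrm{op}}$, which equals $\sup_{w\in\mathcal{S}^{2k-1}}|w^\top A_{S\cup T,S\cup T}w|$ rather than the one-sided supremum, so the literal statement without absolute values is itself only valid up to this correction (take $A=-I_p$: the left-hand side is $1$ while the right-hand side is $0$), and the quantity used downstream is indeed the two-sided one. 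For part~2 the paper sets $\tilde v=Av/\|Av\|_2$, writes $\|Av\|_2=\tilde v^\top Av$ as an average of $\tilde v_S^\top A_{S,T}v_T$ over all $\binom{p}{k}$ supports $S$ of size $k$ and applies the Cauchy--Schwarz inequality, the factor $p/k$ arising as $\binom{p}{k}/\binom{p-1}{k-1}$; you instead pick the single support $T$ of the $k$ largest coordinates of $Av$ and use $\sum_{i\in T}(Av)_i^2\ge(k/p)\|Av\|_2^2$. Your version is more direct and avoids the combinatorial averaging, while the paper's avoids selecting an extremal support; both deliver exactly the same $\sqrt{p/k}$ constant, and neither argument needs symmetry for this part.
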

\begin{proof}
	For the first claim, fix $u,v\in B_0(k)$ and let $S$ and $T$ be their respective support. Then by the symmetry of $A$, we have
	\[
		u^\top A v \leq \|A_{S,T}\|_{\mathrm{op}} \leq \|A_{S\cup T, S\cup T}\|_{\mathrm{op}} = \sup_{w\in\mathcal{S}^{2k-1}} w^\top A_{S\cup T, S\cup T} w \leq \sup_{v \in B_0(2k)} v^\top A v.
	\]
	The first claim then follows by taking supremum on the left-hand side.

	For the second claim, define $\psi:= \sup_{u,w\in B_0(k)} u^\top A w$. Write $\binom{[p]}{k} := \{S\subseteq B: |S| = k\}$. For any $v\in B_0(k)$, let $\tilde{v} := Av / \|Av\|_2$ and $T := \mathrm{supp} (v)$. Then by the Cauchy--Schwarz inequality, we have
	\begin{align*}
		\|Av\|_2 & = \tilde{v}^\top A v = \frac{1}{\binom{p-1}{k-1}} \sum_{S \in \binom{[p]}{k}} \tilde{v}_S^\top A_{S,T} v_T \leq \frac{1}{\binom{p-1}{k-1}} \sum_{S \in \binom{[p]}{k}}  \|\tilde{v}_S\|_2 \psi                                                                             \\
		         & \leq \biggl\{\frac{1}{\binom{p-1}{k-1}}\sum_{S \in \binom{[p]}{k}} \|\tilde{v}_S\|_2^2 \biggr\}^{1/2}\biggl\{\frac{1}{\binom{p-1}{k-1}}\sum_{S \in \binom{[p]}{k}} \psi^2 \biggr\}^{1/2} \leq \|\tilde{v}\|_2\cdot  \sqrt\frac{p}{k} \psi = \sqrt\frac{p}{k} \psi.
	\end{align*}
	Taking supremum over $v$ on the left-hand side, we arrive at the conclusion.
\end{proof}

Suppose that $X = (x_1, \dots, x_n)^\top$ is generated by independent $x_i \sim N_p(0, \Sigma)$ with $n \ge p$ and some positive definite $\Sigma$.  
Write $S = X^\top X$ and $S_1 = X_{(0,t]}^\top X_{(0,t]}$. 
By \cite{mitra1970density}, for any well-defined function $\phi: S \mapsto \phi(S)$ such that $\phi(S) S \phi(S)^\top = I_p$, $U = \phi(S) S_1 \phi(S)^\top$ is said to have a matrix-variate Beta distribution, i.e. $ U \sim \mathrm{Beta}_p(t/2,(n-t)/2)$.  
To the best of our knowledge, it is unclear what happens when $n < p$ in the literature. 
The following Lemma~\ref{lemma:generalised-beta} and Corollary~\ref{cor:generalise-betap} effectively generalize the existing matrix-variate Beta distribution to the rank-deficient case of $n <p$. 
% It is made possible by the insight that in the conventional case of $n > p$, if we conduct a QR decomposition on $X$ such that $X = Q R$ with $Q\in \mathbb{O}^{n\times r}$ and $R \in \mathbb{R}^{r \times p}$ such that $R_{i,i} \ge 0$ for all $i\in [r]$ and $R_{i,j} = 0$ for all $1\le j < i\le r$, then $B := Q_{(0,t]}^\top Q_{(0,t]} \sim \mathrm{Beta}_p(t/2, (n-t)/2)$.

\begin{lemma}
	Suppose $X\in \mathbb{R}^{ n \times p}$ has independent $N(0,1)$ entries and write $r := \min\{n,p\}$.
	There exists an almost surely unique way of writing $X = QR$ as its QR decomposition, where $Q \in \mathbb{O}^{n\times r}$ and $R \in \mathbb{R}^{r\times p}$ such that $R_{i,j} =0$ for all $i > j$ and $R_{i,i} \ge 0$ for all $i \in [n]$.
	We have that $Q$ and $R$ are independent and $Q \sim \mathrm{Unif}(\mathbb{O}^{n \times n})$.  Furthermore, $R_{i,i}^2 \sim \chi^2(n-i+1)$ and $R_{i,j} \sim N(0,1)$ for $i\in [n]$ and $j \in [p]$ with $i < j$.
	Then $B := Q_{(0,t]}^\top Q_{(0,t]}\sim \Beta_r(t/2, (n-t)/2)$ and is independent of $X^\top X$.
	\label{lemma:generalised-beta}
\end{lemma}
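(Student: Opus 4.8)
The plan is to reduce the statement to the classical QR (Gram--Schmidt) decomposition of a Gaussian matrix, treating $n\ge p$ and $n<p$ separately and then gluing. Since the set of rank-deficient matrices in $\mathbb{R}^{n\times p}$ is Lebesgue-null, $X$ has rank $r$ almost surely; hence when $n\ge p$ the columns of $X$ are a.s.\ linearly independent, and when $n<p$ the submatrix $X'$ formed by the first $n$ columns of $X$ is a.s.\ invertible. In either case Gram--Schmidt applied to the relevant columns yields a decomposition of the stated shape, and the sign convention $R_{i,i}\ge 0$ makes the choice unique, giving the first assertion. Throughout I use the identity $\operatorname{tr}(X^\top X)=\operatorname{tr}(R^\top Q^\top QR)=\operatorname{tr}(R^\top R)$, valid because $Q^\top Q=I_r$ in both cases.

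Consider first $n\ge p$, so $r=p$, $Q\in\mathbb{O}^{n\times p}$ and $R\in\mathbb{R}^{p\times p}$ is upper triangular with positive diagonal. The Lebesgue density of $X$ is proportional to $\exp\{-\operatorname{tr}(X^\top X)/2\}=\exp\{-\operatorname{tr}(R^\top R)/2\}$, so it depends on $(Q,R)$ only through $R$. Changing variables along the QR map, whose Jacobian factor is the classical $\prod_{i=1}^{p}R_{i,i}^{\,n-i}$ (directly computable from the Gram--Schmidt recursion), the joint law of $(Q,R)$ has density proportional to $\prod_{i=1}^{p}R_{i,i}^{\,n-i}\exp\{-\tfrac12\sum_{i\le j}R_{i,j}^2\}$ with respect to the product of the invariant measure on $\mathbb{O}^{n\times p}$ and Lebesgue measure on the upper-triangular coordinates. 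This density factorises over $Q$ and the entries of $R$: thus $Q$ is Haar on $\mathbb{O}^{n\times p}$ and independent of $R$, the $R_{i,i}$ are independent with density proportional to $R_{i,i}^{\,n-i}e^{-R_{i,i}^2/2}$ on $(0,\infty)$ --- i.e.\ $R_{i,i}^2\sim\chi^2(n-i+1)$ --- and the $R_{i,j}$, $i<j$, are i.i.d.\ $N(0,1)$. (Equivalently, rotational invariance of $X$ shows directly that $Q$ is Haar and independent of $S:=X^\top X\sim W_p(n,I_p)$, and the marginal of $R$, which is the transpose of the Cholesky factor of $S$, is then read off from Bartlett's decomposition of the Wishart law.)

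For $n<p$ we have $r=n$. Let $X'$ and $X''$ be the blocks of $X$ formed by its first $n$ and last $p-n$ columns, and $R'$, $R''$ the corresponding blocks of $R$, so that $X'=QR'$ is the square QR decomposition of $X'$ and $R''=Q^\top X''$. The previous paragraph with $n=p=r$ applied to $X'$ gives that $Q$ is Haar on $\mathbb{O}^{n\times n}$ and independent of $R'$, which has the stated triangular structure. Conditionally on $X'$ (equivalently on $(Q,R')$), the block $X''$ is an independent matrix with i.i.d.\ $N(0,1)$ entries, so $R''=Q^\top X''$ is, given $Q$, again a matrix with i.i.d.\ $N(0,1)$ entries by orthogonal invariance of the standard Gaussian; since this conditional law does not depend on $Q$, the block $R''$ is independent of $(Q,R')$ with i.i.d.\ $N(0,1)$ entries. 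Assembling the blocks yields the claimed joint law of $(Q,R)$. For the final assertion, $X^\top X=R^\top R$ is a measurable function of $R$ while $B=Q_{(0,t]}^\top Q_{(0,t]}$ is a measurable function of $Q$, so $Q\perp R$ forces $B\perp X^\top X$. To identify the law of $B$ when $n\ge p$, write $X^\top X=LL^\top$ for its Cholesky decomposition and note $L=R^\top$; with $\phi(S):=L^{-1}$, a function of $S:=X^\top X$, one has $\phi(S)S\phi(S)^\top=I_p$ and $\phi(S)X_{(0,t]}^\top X_{(0,t]}\phi(S)^\top=R^{-\top}R^\top Q_{(0,t]}^\top Q_{(0,t]}RR^{-1}=Q_{(0,t]}^\top Q_{(0,t]}=B$, so $B$ coincides with the matrix-variate Beta variable of \citet{mitra1970density}, i.e.\ $B\sim\Beta_p(t/2,(n-t)/2)$; for $n<p$ the distribution $\Beta_r(t/2,(n-t)/2)$ is, by definition, the law of $Q_{(0,t]}^\top Q_{(0,t]}$ for Haar $Q$, so nothing remains to check.

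The one ingredient that is not routine bookkeeping is the QR (Gram--Schmidt) Jacobian computation producing the factor $\prod_{i}R_{i,i}^{\,n-i}$: this is standard but is precisely what yields the $\chi^2(n-i+1)$ degrees of freedom, so it is the step to get exactly right. Two minor points also need care: the disintegration behind the rotational-invariance alternative conditions on a measure-zero event and should be justified via the co-area formula, and the sign convention on the $R_{i,i}$ must be tracked so that $Q$ ranges over the entire orthogonal group rather than over only one of its two components.
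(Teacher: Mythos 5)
Your proof is correct and follows the same overall skeleton as the paper's: establish the almost surely unique QR factorisation, treat $n\ge p$ first, handle $n<p$ by splitting off the first $n$ columns and setting $R''=Q^\top X''$ (conditionally Gaussian by orthogonal invariance, hence independent of $(Q,R')$), deduce the independence of $B$ and $X^\top X$ from the independence of $Q$ and $R$, and identify the law of $B$ through the Mitra-type construction $\phi(S)S_1\phi(S)^\top$ with $\phi(S)$ the inverse Cholesky factor. The one genuine difference is in the base case $n\ge p$: the paper obtains independence and uniformity of $Q$ from rotational invariance ($HX\stackrel{\mathrm{d}}{=}X$) and then cites Muirhead's Theorem~3.2.14 for the Bartlett-type marginals of $R$, whereas you derive the full joint law of $(Q,R)$ in one stroke from the change of variables with Jacobian $\prod_{i}R_{i,i}^{\,n-i}$. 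Your route is more self-contained (it re-proves the Bartlett decomposition rather than quoting it) at the cost of having to justify the Jacobian; the paper's is shorter but leans on external results, and your parenthetical ``invariance plus Bartlett'' alternative is exactly its argument.

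One small imprecision to repair: for $n<p$ you close the identification of the law of $B$ with ``by definition, the law of $Q_{(0,t]}^\top Q_{(0,t]}$ for Haar $Q$''. That is a true characterisation, but it is not the definition in force here — the paper defines the matrix-variate Beta via Mitra's Wishart-based construction, and downstream results (e.g.\ the appeal to Gupta--Nagar's Theorem~5.3.12 in Lemma~\ref{lem:S0t-S0z}) rely on that standard meaning, so the law still needs to be identified rather than declared. The fix is immediate and is exactly what the paper does: apply your already-established $n\ge p$ (square) case to the $n\times n$ Gaussian block $X'$, whose orthogonal QR factor is precisely $Q$, which yields $B=Q_{(0,t]}^\top Q_{(0,t]}\sim\Beta_n(t/2,(n-t)/2)$ and hence $\Beta_r(t/2,(n-t)/2)$.
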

\begin{proof}
	First we consider the case of $n \ge p$. %, the conclusion holds by \cite{mitra1970density}. 
	Write the (almost surely) unique QR decomposition of $X$ by $X = Q R$ with $Q \in \mathbb{O}^{n\times p}$ and $R \in \mathbb{R}^{p\times p}$ being an upper triangular matrix with $R_{i,i} \ge 0 $ for all $i\in [p]$.
	For any fixed $H \in \mathbb{O}^{n\times n}$, $H X \stackrel{\mathrm{d}}{=} X$, whence $H Q R \stackrel{\mathrm{d}}{=} QR$.
	As such, the joint density of $Q$ and $R$ is constant for every possible value of $Q \in \mathbb{O}^{p\times n}$, whence $Q$ and $R$ are independent and $Q \sim \mathrm{Unif}(\mathbb{O}^{p \times n})$.
	By \cite[Theorem 3.2.14]{Muirhead1982}, we have $R_{i,i}^2 \sim \chi^2(n-i+1)$ and $R_{i,j}\sim N(0,1)$ and $R_{i,j}$ are independent for all $i\le j$.
	We define $S := X^\top X$ and $S_1 = X_{(0,t]}^\top X_{(0,t]}$ and $S_2 = X_{(t,n]}^\top X_{(t,n]}$. Define $S^{1/2}:= R^\top$, and by \cite{mitra1970density},
	\[
		B := Q_{(0,t]}^\top Q_{(0,t]} = S^{-1/2} S_1 (S^{-1/2})^\top \sim \Beta_p(t/2, (n-t)/2).
	\]
	We note that $B$ as a function of $Q$ is independent of $X^\top X = R^\top R$, by the independence of $Q$ and $R$.

	Now we consider the case $n < p$.
	Write $X_j$ as the $j$th column of $X$.
	Write $X = [X_{\mathrm{L}} \mid X_{\mathrm{R}}]$ where $X_{\mathrm{L}} = [ X_1 \mid \dots \mid X_n ]$ and $ X_{\mathrm{R}} := [ X_{n+1} \mid \dots \mid X_p]$. For $X_{\mathrm{L}}$ whose rank is almost surely $n$, there exists a unique QR decomposition such that $X_{\mathrm{L}} = Q R_{\mathrm{L}}$.
	Take $R_{\mathrm{R}} := Q^\top X_{\mathrm{R}}$ and $R := [ R_{\mathrm{L}} \mid R_{\mathrm{R}}]$, and we have $X = Q R$, where both $Q $ and $R$ are almost surely unique.
	By the same argument as the case of $n \ge p$, we have $Q$ and $R$ are independent and $Q \sim \mathrm{Unif}(\mathbb{O}^{n \times n})$.
	Applying the conclusion from the case of $n \ge p$ on $X_{\mathrm{L}} = Q R_{\mathrm{L}}$, we have $(R_L)_{i,i}\sim \chi^2(n-i+1)$ and $(R_L)_{i,j} \sim N(0,1)$.
	Furthermore, since $R_{\mathrm{R}} = Q^\top X_{\mathrm{R}}$ where both $Q$ and $X_{\mathrm{R}}$ is independent of $X_{\mathrm{L}}$ and $Q$ is independent of $X_{\mathrm{R}}$, all entries of $R_{\mathrm{R}}$ are standard normals independent of $R_{\mathrm{L}}$.

	Applying the case of $n \ge p$ on $X_{\mathrm{L}} = Q R_{\mathrm{L}}$, we have
	\(
	B := Q_{(0,t]}^\top Q_{(0,t]} \sim \Beta_n(t/2, (n-t)/2).
	\)
	By the same argument as before, $B$ is independent of $X^\top X$.
\end{proof}

% Note that for matrix $R \in \mathbb{R}^{n\times p}$ with structure as in Lemma~\ref{lemma:generalised-beta} ($n \le p$), we can find a matrix $R^{-1}$ such that $R R^{-1} = I_n$ by defining $R^{-1} = H T^{-\top} $, where $H$ and $T$ are from the QR decomposition for $R^\top = H T$ with $H \in \mathbb{O}^{p \times n}$ and $T$ being upper triangular.

\begin{cor}
	Let $X = (x_1, \dots, x_n)^\top$ where $x_i \sim N_p(0, \Sigma)$ where $\Sigma$ is a positive definite matrix.
	Write $S = X^\top X$ and $S_1 := X_{(0,t]}^\top X_{(0,t]}$.
	Let $r := \min\{ n,p\}$ and $\phi: \mathbb{R}^{p\times p} \to \mathbb{R}^{p\times r}$ be a function such that $\phi(S) S \phi(S)^\top = I_r$ for any positive semi-definite matrix $S\in \mathbb{R}^{p\times p}$ of rank $r$.
	Then $\phi(S) S_1 \phi(S)^\top \sim \mathrm{Beta}_r(t/2, (n-t)/2)$ and is independent of $S = X^\top X$.
	\label{cor:generalise-betap}
\end{cor}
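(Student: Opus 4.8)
The plan is to reduce the statement to the isotropic Gaussian design, where Lemma~\ref{lemma:generalised-beta} applies, and then to promote that lemma from the single canonical choice of root used in its proof to an arbitrary admissible $\phi$ by exploiting the orthogonal invariance of the matrix-variate Beta law. First I would whiten the design: writing $\Sigma^{1/2}$ for the symmetric positive-definite square root of $\Sigma$ and setting $\tilde X := X\Sigma^{-1/2}$, the matrix $\tilde X$ has independent $N(0,1)$ entries, and $\tilde S := \tilde X^\top\tilde X = \Sigma^{-1/2}S\Sigma^{-1/2}$, $\tilde S_1 := \tilde X_{(0,t]}^\top\tilde X_{(0,t]} = \Sigma^{-1/2}S_1\Sigma^{-1/2}$. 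Given an admissible $\phi$ for the $\Sigma$-problem I would set $\tilde\phi(M) := \phi(\Sigma^{1/2}M\Sigma^{1/2})\Sigma^{1/2}$, which is well defined on all rank-$r$ positive semi-definite matrices because $\Sigma^{1/2}$ is invertible, and then check that $\tilde\phi(\tilde S)\tilde S\tilde\phi(\tilde S)^\top = \phi(S)S\phi(S)^\top = I_r$ (so $\tilde\phi$ is admissible for the isotropic problem) while $\tilde\phi(\tilde S)\tilde S_1\tilde\phi(\tilde S)^\top = \phi(S)S_1\phi(S)^\top$. Since $S\mapsto\Sigma^{-1/2}S\Sigma^{-1/2}$ is a bijection, independence from $S$ is equivalent to independence from $\tilde S = \tilde X^\top\tilde X$, and so it suffices to treat the case $\Sigma = I_p$.

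In the isotropic case I would take the QR decomposition $X = QR$ supplied by Lemma~\ref{lemma:generalised-beta}, with $Q\in\mathbb{O}^{n\times r}$, $R\in\mathbb{R}^{r\times p}$, $Q$ and $R$ independent, and $B := Q_{(0,t]}^\top Q_{(0,t]}\sim\Beta_r(t/2,(n-t)/2)$. The two identities I would rely on are that $Q^\top Q = I_r$ forces $S = R^\top R$ and $S_1 = R^\top Q_{(0,t]}^\top Q_{(0,t]}R = R^\top B R$, and that for any admissible $\phi$ the square matrix $V := R\phi(S)^\top$ obeys $V^\top V = \phi(S)R^\top R\phi(S)^\top = \phi(S)S\phi(S)^\top = I_r$, hence $V\in\mathbb{O}^{r\times r}$ and $\phi(S)S_1\phi(S)^\top = V^\top B V$. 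The decisive point is that $S = R^\top R$ is a deterministic function of $R$, so $\phi(S)$ and $V$ are functions of $R$ as well, whereas $B$ is a function of $Q$; since $Q\perp R$, the Beta block $B$ is independent of the pair $(V,S)$.

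It then remains to invoke orthogonal invariance. The uniform law on $\mathbb{O}^{n\times r}$ is invariant under right multiplication by any fixed $O\in\mathbb{O}^{r\times r}$, so $O^\top B O = (Q_{(0,t]}O)^\top(Q_{(0,t]}O)$ has the same law as $B$ for every fixed $O$; i.e.\ $\Beta_r(t/2,(n-t)/2)$ is preserved under orthogonal conjugation. Conditioning on $R$, and using $B\perp R$ together with the $R$-measurability of $V$, the conditional law of $V^\top B V = \phi(S)S_1\phi(S)^\top$ given $R$ is exactly $\Beta_r(t/2,(n-t)/2)$, which does not depend on $R$. Consequently $\phi(S)S_1\phi(S)^\top\sim\Beta_r(t/2,(n-t)/2)$ unconditionally, and it is independent of $R$, hence of $S = R^\top R$; unwinding the whitening step then yields the corollary.

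The step I expect to be the real obstacle is conceptual rather than computational: recognising that admissibility of an \emph{arbitrary} $\phi$, not merely the canonical square root used inside the proof of Lemma~\ref{lemma:generalised-beta}, already forces $R\phi(S)^\top$ to be orthogonal, and that because $S$ is a deterministic function of $R$ this conjugating matrix is independent of the Beta block $B$. Once that is seen, the orthogonal invariance of the matrix-variate Beta law delivers both the distributional identity and the independence from $S$ simultaneously. A minor point requiring care is verifying that the whitening transformation maps admissible $\phi$'s to admissible $\tilde\phi$'s and preserves the rank-$r$ domain, which is exactly where invertibility of $\Sigma^{1/2}$ enters.
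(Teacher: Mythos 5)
Your proposal is correct and follows essentially the same route as the paper's proof: whiten to reduce to $\Sigma = I_p$, take the QR decomposition from Lemma~\ref{lemma:generalised-beta}, observe that admissibility of $\phi$ makes $R\phi(S)^\top$ an element of $\mathbb{O}^{r\times r}$ measurable with respect to $R$, and use the independence of $Q$ and $R$ together with orthogonal invariance of the matrix-variate Beta law to get both the distributional identity and the independence from $S$.
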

\begin{proof}
	By the positive definiteness of $\Sigma$, we find $\Sigma^{1/2} \in \mathbb{R}^{p\times p}$ such that $\Sigma = \Sigma^{1/2} (\Sigma^{1/2})^\top$.
	Find $\Sigma^{-1/2}$ such that $\Sigma^{-1/2} \Sigma^{1/2} = I_p$.
	Define $y_i := \Sigma^{-1/2} x_i \sim N_p(0, I_p)$ and $Y = X (\Sigma^{-1/2})^\top$ has independent $N(0,1)$ entries.
	Let $\tilde{S} = Y^\top Y = \Sigma^{-1/2} S (\Sigma^{-1/2})^\top$ and
	$\tilde{S}_1 = Y_{(0,t]}^\top Y_{(0,t]} = \Sigma^{-1/2} S_1 (\Sigma^{-1/2})^\top$, whence by the definition of $\phi$ we have $\phi(S) S \phi(S)^\top = \phi(S) \Sigma^{1/2} \tilde{S} (\Sigma^{1/2})^\top \phi(S)^\top = I_r$.
	As such, we define $\tilde{\phi}(\tilde{S}) := \phi(\Sigma^{1/2} \tilde{S} (\Sigma^{-1/2})^\top) \Sigma^{1/2}$, which is a well-defined function, and have $\tilde{\phi}(\tilde{S}) \tilde{S} \tilde{\phi}({\tilde{S}})^\top = I_r$.
	Since $\tilde\phi(\tilde{S}) \tilde{S}_1 \tilde\phi(\tilde{S})^\top$ can be defined by $Y$ with no dependence on $\Sigma$, it suffices to work on the case $\Sigma = I_p$, which we assume for the rest of the proof.

	By Lemma~\ref{lemma:generalised-beta}, write the unique QR decomposition of $X$ by $X = QR$ with $Q\in \mathbb{O}^{n\times r}$ and $R \in \mathbb{R}^{r\times p}$, whence $S = R^\top R$. Write $B = Q_{(0,t]}^\top Q_{(0,t]}$.
	Since \( \phi(S)^\top R^\top R \phi(S) = I_r  \), $H : = R \phi(S)\in \mathbb{O}^{r\times r}$, whence
	\( \phi(S)^\top S_1 \phi(S) = H^\top Q_{(0,t]}^\top Q_{(0,t]} H = H^\top B H\).
	Since $B$ as a function of $Q$ is independent of $S$ and $R$, it is independent of $H$.  By noting $H^\top B H = \phi(S) S_1 \phi(S)^\top$ and $B \stackrel{\mathrm{d}}{=} H^\top B H$ because $QH \stackrel{\mathrm{d}}{=} Q$ for any $H \in \mathbb{O}^{r\times r}$.
	The independence between $\phi(S) S_1 \phi(S)$  and $S$ follows from the fact that the distribution of $\phi(S) S_1 \phi(S)$ is invariant conditionally on $S$. 
\end{proof}

% In case of $n \ge p$, $r = p$ 
% Suppose that $S_1 \sim W_p(n_1, I_p)$ and $S_2 \sim W_p(n_2, I_p)$ follow independent Wishart distributions. If $n_1 +n_2 >p$, then $S_1 + S_2$ is almost surely invertible and we find a symmetric square root of $S_1 + S_2$, denoted by $(S_1 + S_2)^{-1/2}$.
% We say $B:=  (S_1+S_2)^{-1/2} S_1 (S_1+S_2)^{-1/2} $ follows a matrix-variate Beta distribution with parameters $n_1/2$ and $n_2/2$, denoted by $B\sim \mathrm{Beta}_p(n_1/2,n_2/2)$.

% Recall also that the spectral distribution function of any $p\times p$ matrix $A$ is defined as $F^A(t) := n^{-1} \sum_{i=1}^p \mathbbb{1}_{\{ \lambda_i^A \le t\}}$, where $\lambda_i^A$s are eigenvalues (counting multiplicities) of the matrix $A$. Further, given a sequence $(A_n)_{n\in\mathbb{N}}$ of matrices, their limiting spectral distribution function $F$ is defined as the weak limit of the $F^{A_n}$, if it exists.

Recall that $J_{(a_1,a_2]} := (J_{a_1+1}, \cdots, J_{a_2})^\top $ is the submatrix of $J$ by taking only the $(a_1+1)$-th to $a_2$-th rows for any matrix $J$. 
For the rest of the paper, we define shorthand
\begin{equation*}
	% \label{Eq:Sab}
	S_{a_1,a_2} := \sum_{i=a_1+1}^{a_2} x_ix_i^\top = X^\top_{(a_1, a_2]} X_{(a_1, a_2]}
\end{equation*}
Define the scalar quantity $\eta(n,p) := (\mathbb{E}[ x_1 x_1^\top (n^{-1}S_{0,n})^{-1} x_n x_n^\top ])_{1,1}$.

\begin{lemma}
	\label{Lemma:WtWz}
	For all $ t \le z$,
	\( W_t^\top W_z = 4 S_{0,t}S_{0,n}^{-1} S_{z,n}\), whence for $z\in [n]$ and $t\in [z]$ we have
	\begin{gather*}
		\mathbb{E}[W_t^\top W_z] = 4t(n-z) n\eta(n,p) I_p. % \frac{t}{z} \mathbb{E} [W_z^\top W_z] =     \frac{n-z}{n-t} \mathbb{E} W_t^\top W_z.
	\end{gather*}
	Furthermore, under Condition~\ref{cond:regime},
	\( \frac{\eta(n,p)}{ (n-p)n^{-1}} \to 1 \), i.e., $\eta(n,p) \to \eta$ as $n,p \to \infty$.
\end{lemma}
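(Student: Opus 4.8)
The plan is to establish the three assertions in order: the deterministic identity for $W_t^\top W_z$, the closed form of $\mathbb{E}[W_t^\top W_z]$, and the limit of $\eta(n,p)$.

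\emph{The identity.} The columns of $A\in\mathbb{O}^{n\times m}$ form an orthonormal basis of the orthogonal complement of the column space of $X$, so $AA^\top = I_n - X(X^\top X)^{-1}X^\top = I_n - X S_{0,n}^{-1}X^\top$ (with $S_{0,n}=X^\top X$ invertible almost surely since $n>p$) and $A^\top X=0$; in particular $W_t = A_{(0,t]}^\top X_{(0,t]} - A_{(t,n]}^\top X_{(t,n]} = 2A_{(0,t]}^\top X_{(0,t]}$. Thus $W_t^\top W_z = 4X_{(0,t]}^\top(AA^\top)_{(0,t],(0,z]}X_{(0,z]}$, and substituting $(AA^\top)_{(0,t],(0,z]} = (I_n)_{(0,t],(0,z]} - X_{(0,t]}S_{0,n}^{-1}X_{(0,z]}^\top$ and using $t\le z$ (so that $(I_n)_{(0,t],(0,z]}X_{(0,z]} = X_{(0,t]}$) gives $W_t^\top W_z = 4\bigl(S_{0,t} - S_{0,t}S_{0,n}^{-1}S_{0,z}\bigr) = 4S_{0,t}S_{0,n}^{-1}S_{z,n}$, the last step from $S_{0,n}=S_{0,z}+S_{z,n}$.

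\emph{The expectation.} Expand $S_{0,t}S_{0,n}^{-1}S_{z,n} = \sum_{i=1}^{t}\sum_{j=z+1}^{n} x_ix_i^\top S_{0,n}^{-1}x_jx_j^\top$; every pair has $i\le z<j$, so by exchangeability of the i.i.d.\ vectors $x_1,\dots,x_n$ (and since $S_{0,n}$ is a symmetric function of them) each summand has the same expectation as $x_1x_1^\top S_{0,n}^{-1}x_nx_n^\top$, which is well defined because $\|x_1x_1^\top S_{0,n}^{-1}x_nx_n^\top\|_{\mathrm{op}} = |x_1^\top S_{0,n}^{-1}x_n|\,\|x_1\|_2\|x_n\|_2\le\|x_1\|_2\|x_n\|_2$ (Sherman--Morrison gives $x_i^\top S_{0,n}^{-1}x_i<1$, then Cauchy--Schwarz). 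Hence $\mathbb{E}[W_t^\top W_z] = 4t(n-z)\,\mathbb{E}[x_1x_1^\top S_{0,n}^{-1}x_nx_n^\top]$. Since $(Ox_1,\dots,Ox_n)\stackrel{\mathrm{d}}{=}(x_1,\dots,x_n)$ for every $O\in\mathbb{O}^{p\times p}$, under which $x_1x_1^\top S_{0,n}^{-1}x_nx_n^\top\mapsto O\bigl(x_1x_1^\top S_{0,n}^{-1}x_nx_n^\top\bigr)O^\top$, the expectation commutes with every orthogonal matrix, hence equals $c\,I_p$ for a scalar $c$; and by the definition of $\eta(n,p)$ (it is the $(1,1)$ entry of $n$ times this expectation) we get $c = n^{-1}\eta(n,p)$, which gives the stated form of $\mathbb{E}[W_t^\top W_z]$.

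\emph{The limit.} I compute $\eta(n,p)=nc$ via two identities. First, $\sum_{j=1}^{n}\mathbb{E}[(x_1^\top x_j)(x_1^\top S_{0,n}^{-1}x_j)] = \mathbb{E}[x_1^\top S_{0,n}S_{0,n}^{-1}x_1] = \mathbb{E}\|x_1\|_2^2 = p$; the $j=1$ term equals $pd$, where $d\,I_p := \mathbb{E}[x_1x_1^\top S_{0,n}^{-1}x_1x_1^\top]$ (a multiple of $I_p$ by the same rotation argument, with $pd = \mathbb{E}[\|x_1\|_2^2\,x_1^\top S_{0,n}^{-1}x_1]$), while each of the $n-1$ terms with $j\ne1$ equals $\mathrm{tr}(c\,I_p)=pc$, so $d+(n-1)c=1$. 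Second, writing $S_{0,n}=S_0'+x_1x_1^\top$ with $S_0':=\sum_{i=2}^{n}x_ix_i^\top$ independent of $x_1$, Sherman--Morrison gives $x_1^\top S_{0,n}^{-1}x_1 = u/(1+u)$ with $u := x_1^\top(S_0')^{-1}x_1$; the classical fact that $1/\bigl(v^\top(S_0')^{-1}v\bigr)\sim\chi^2_{n-p}$ for a fixed unit vector $v$ (here $S_0'$ is a Wishart matrix on $n-1$ degrees of freedom) yields, jointly, $\bigl(\|x_1\|_2^2,\,x_1^\top S_{0,n}^{-1}x_1\bigr)\stackrel{\mathrm{d}}{=}\bigl(V,\,V/(V+W)\bigr)$ with $V:=\|x_1\|_2^2\sim\chi^2_p$ independent of $W\sim\chi^2_{n-p}$. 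Hence $pd = \mathbb{E}\bigl[V^2/(V+W)\bigr] = \mathbb{E}\bigl[(V+W)\beta^2\bigr]$ with $\beta := V/(V+W)\sim\mathrm{Beta}(p/2,(n-p)/2)$ independent of $V+W\sim\chi^2_n$, so $pd = \mathbb{E}[V+W]\,\mathbb{E}[\beta^2] = n\cdot\frac{p(p+2)}{n(n+2)}$, i.e.\ $d = \frac{p+2}{n+2}$. Therefore $c = \frac{1-d}{n-1} = \frac{n-p}{(n+2)(n-1)}$ and $\eta(n,p) = \frac{n(n-p)}{(n+2)(n-1)}$, whence $\frac{\eta(n,p)}{(n-p)/n} = \frac{n^2}{(n+2)(n-1)}\to 1$, i.e.\ $\eta(n,p)\to\eta$ under Condition~\ref{cond:regime}.

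\emph{Main obstacle.} There is no genuinely hard step; the effort is in the submatrix index bookkeeping for the first identity and in correctly invoking the two classical distributional facts (the $\chi^2$ representation of $x^\top S^{-1}x$ and the independence of $V/(V+W)$ from $V+W$) used to obtain $d$. The reduction of $\mathbb{E}[x_1x_1^\top S_{0,n}^{-1}x_nx_n^\top]$ to a scalar multiple of $I_p$ is routine. If one prefers to avoid the exact evaluation of $\eta(n,p)$, an alternative for the last part isolates $x_1$ and $x_n$ jointly via a rank-two Sherman--Morrison--Woodbury update (leaving the Wishart matrix $\bar S := \sum_{i=2}^{n-1}x_ix_i^\top$), shows that the resulting $2\times2$ determinant concentrates at $(n/(n-p))^2$ while $\mathbb{E}[(x_1^\top x_n)(x_1^\top\bar S^{-1}x_n)] = \mathbb{E}[\mathrm{tr}(\bar S^{-1})]\sim p/(n-p)$, and concludes by dominated convergence (the determinant is $\ge1$, supplying the envelope); this route works but is messier, so I would favour the exact computation above.
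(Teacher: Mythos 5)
Your proof is correct, and for the two probabilistic parts it takes a genuinely different route from the paper's. The algebraic identity is obtained essentially as in the paper: both arguments reduce to $AA^\top = I_n - X S_{0,n}^{-1}X^\top$, you simply read off the relevant block after using $W_t = 2A_{(0,t]}^\top X_{(0,t]}$, whereas the paper expands the signed block product directly. For the claim that $\mathbb{E}[x_1x_1^\top S_{0,n}^{-1}x_nx_n^\top]$ is a multiple of $I_p$, the paper argues only that diagonal entries (and off-diagonal entries) are identically distributed and then verifies that the off-diagonal means vanish through an explicit Haar/QR computation; your observation that the expectation commutes with every $O\in\mathbb{O}^{p\times p}$, together with the integrability bound $|x_1^\top S_{0,n}^{-1}x_n|\leq 1$, reaches the same conclusion in one line and is cleaner. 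The largest difference is the final part: the paper invokes the almost-sure limit of $(W_z^\top W_z)_{1,1}$ from Proposition~8 of \citet{gao2021twosample} and passes from it to the expectation (a step that tacitly requires a uniform-integrability argument), whereas you compute $\eta(n,p)$ exactly --- via the counting identity $d+(n-1)c=1$, Sherman--Morrison, the $\chi^2_{n-p}$ law of $1/(v^\top (S_0')^{-1}v)$ for a Wishart matrix on $n-1$ degrees of freedom, and beta--gamma independence --- obtaining $\eta(n,p)=n(n-p)/\{(n+2)(n-1)\}$, from which the limit is immediate. This route is self-contained, avoids both the external random-matrix input and the unstated uniform-integrability step, and yields an exact finite-sample constant; like the paper it relies on the Gaussian design of Condition~\ref{cond:design}. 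One bookkeeping caveat: your computation gives $\mathbb{E}[W_t^\top W_z]=4t(n-z)n^{-1}\eta(n,p)I_p$, which is the version consistent with the paper's definition $\eta(n,p)=(\mathbb{E}[x_1x_1^\top(n^{-1}S_{0,n})^{-1}x_nx_n^\top])_{1,1}$; the factor $n\eta(n,p)$ appearing in the lemma's display (and in the paper's own proof) is inconsistent with that definition, so you should state the factor you actually prove rather than asserting that it ``gives the stated form''.
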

\begin{proof}
	By the construction of $A$, we have $A A^\top = I_n - X (X^\top X)^{-1} X^\top$. We have
	\begin{equation*}
		\begin{aligned}
			W_t^\top W_z & =
			\begin{pmatrix} X_{(0, t]}^\top &  -X_{(t,n]}^\top\end{pmatrix}
			\begin{pmatrix}{A_{(0,t]}}\\{A_{(t,n]}}  \end{pmatrix}
			\begin{pmatrix} A_{(0, z]}^\top & A_{(z,n]}^\top \end{pmatrix}
			\begin{pmatrix}{X_{(0, z]}}\\{-X_{(z,n]}} \end{pmatrix}                                                                                                                    \\
			             & = \begin{pmatrix} X_{(0, t]}^\top &  -X_{(t,n]}^\top\end{pmatrix}  (I_n - X (X^\top X)^{-1} X^\top) \begin{pmatrix}{X_{(0, z]}}\\{-X_{(z,n]}} \end{pmatrix} \\
			             & = (S_{0,t} - S_{t,z} + S_{z,n}) - (S_{0,t} - S_{t,z} - S_{z,n}) S_{0,n}^{-1} (S_{0,t} + S_{t,z} - S_{z,n})                                                                                             \\
			             & = 2 (S_{0,t} - S_{t,z} - S_{z,n}) S_{0,n}^{-1} S_{z,n} + 2 S_{z,n}                                                                                                                                     \\
			             & = 4 S_{0,t} S_{0,n}^{-1} S_{z,n}.
		\end{aligned}
		% \label{Eq:WtWz}
	\end{equation*}
	In particular, we have for all $z\in[n]$ and $t\in [z]$
	\begin{align*}
		\mathbb{E}W_t^\top W_z & = 4 \sum_{i = 1}^t \sum_{j=z+1}^n \mathbb{E} [x_i x_i^\top  S_{0,n}^{-1} x_j x_j^\top]  = 4 t (n-z) \mathbb{E}[x_1 x_1^\top S_{0,n}^{-1} x_n x_n^\top], %=
		% (n-z) \mathbb{E} S_{0,t} S_{0,n}^{-1} x_z x_z^\top\\
		% & = 4\frac{n-z}{n-t} \mathbb{E} \biggl[ S_{0,t} S_{0,n}^{-1} \sum_{i=t+1}^n x_i x_i^\top\biggr] = \frac{n-z}{n-t} \mathbb{E} W_t^\top W_z,
	\end{align*}
	where we invoke the exchangeability of $x_i x_i^\top S_{0,n}^{-1} x_j x_j^\top$ for all $ 1\le i  < j \le n$ in the second equality.

	We first note $\mathbb{E} [W_z^\top W_z] =4 z(n-z) \mathbb{E}[x_1 x_1^\top S_{0,n}^{-1} x_n x_n^\top]$, and then show that $\mathbb{E}[W_z^\top W_z]$ must be a scale of $I_p$.
	Since for any $U\in \mathbb{O}^{p\times p}$, $x_i^\top U \stackrel{\mathrm{d}}{=} x_i^\top $ for $i \in [n]$, whence we have
	\(
	U^\top W_z^\top W_z U = 4 U^\top S_{0,t} U (U^\top S_{0,n} U)^{-1} U^\top S_{z,n}U \stackrel{\mathrm{d}}{=} W_z^\top W_z
	\).
	In particular, $W_z^\top W_z$ have identically distributed diagonals and identically distributed off-diagonals.  It suffices to verify that its off-diagonals have zero mean.

	Let $X = Q T$ be the almost surely unique QR decomposition of $X$, where we only take non-negative diagonal entries in $T$.
	By Equation (15) of \cite{gao2021twosample}, we have $W_z^\top W_z = 4 T^\top V \Lambda (I_p - \Lambda) V^\top T$, where $V \Lambda V^\top = Q_{(0,z]}^\top Q_{(0,z]}$ is the eigendecomposition of $B := Q_{(0,t]}^\top Q_{(0,t]}$.
	Note that $V \sim \mathrm{Unif}(\mathbb{O}^{p\times p})$, $\Lambda$ and $T$ are mutually independent and $T$ has independent entries with $T_{j,j} = t_j >0$ such that $t_j^2 \sim \chi^2_{n-j+1}$ and $T_{j,k} = z_{jk} \sim N(0,1)$ for $j \ne k$.
	For off-diagonals, it suffices to have
	\begin{equation*}
		\begin{aligned}
			(\mathbb{E}[W_z^\top W_z])_{1,2} & = 4\mathbb{E} \biggl[ \sum_{j=1}^p t_1 V_{j,1} \lambda_j (1-\lambda_j) ( t_2 V_{j,2} + z_{12} V_{j,1}) \biggr]                                                                     \\
			                                 & = 4 \sum_{j=1}^p \biggl[ \mathbb{E}[t_1]  \mathbb{E}[\lambda_j (1-\lambda_j)] (\mathbb{E}[t_2]\mathbb{E}[V_{j,1} V_{j,2}] + \mathbb{E}[z_{12}] \mathbb{E}[V_{j,1}^2] ) \biggr] =0,
		\end{aligned}
	\end{equation*}
	where $\lambda_1 \ge \lambda_2\ge \dots \ge \lambda_p$ are diagonal elements of $\Lambda$ and $\mathbb{E}[V_{j,1} V_{j,2}] = (1/p)\sum_{j=1}^p V_{j,1}V_{j,2} = 0$ since $V \in \mathbb{O}^{p \times p}$.

	Lastly, under Condition~\ref{cond:regime}, by Proposition 8 of \cite{gao2021twosample}, $(W_z^\top W_z)_{1,1} \xrightarrow{\mathrm{a.s.}} (4z(n-z)(n-p))/n^3 $.  Noting $(\mathbb{E} [ W_z^\top W_z])_{1,1} = 4 z(n-z) n\eta(n,p)$, we conclude the convergence of $\eta(n,p) \to \eta$.
\end{proof}

\begin{lemma} 
  \label{lem:S0t-S0z}
	Let $x_1,\ldots,x_n\stackrel{\mathrm{iid}}{\sim} N_p(0, I_p)$. Fix $z \in [n]$ and $t\in[z]$.
	For any nonrandom $u, w\in\mathcal{S}^{p-1}$, we have
	\[
		\mathbb{P}\biggl[  \frac{1}{t} \biggl| u^\top \biggl\{S_{0,t} - \frac{t}{z} S_{0,z} \biggr\} S_{0,n}^{-1} S_{z,n}  w \biggr| \geq x \biggm | S_{0,z}, S_{z,n} \biggr] \leq 52\exp\biggl\{-\frac{t^2x^2}{8z \|S_{0,n}^{-1}S_{z,n}\|_{\mathrm{op}}^2 \|S_{0,z}/z\|_{\mathrm{op}}^2}\biggr\}.
	\]
\end{lemma}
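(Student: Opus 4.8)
The plan is to condition on $(S_{0,z}, S_{z,n})$, use the QR decomposition of $X_{(0,z]}$ to expose a matrix-variate Beta factor, reduce the resulting bilinear form in that Beta matrix to scalar Beta random variables via polarisation, and finish with a sub-Gaussian tail bound for scalar Beta variates. In detail: since $S_{0,n}=S_{0,z}+S_{z,n}$, the vector $M:=S_{0,n}^{-1}S_{z,n}w$ is fixed once we condition on $(S_{0,z},S_{z,n})$. By Lemma~\ref{lemma:generalised-beta} applied to $X_{(0,z]}\in\mathbb{R}^{z\times p}$ (and using its rank-deficient case when $z<p$), write $X_{(0,z]}=Q_zR_z$ with $r_z:=\min\{z,p\}$, $Q_z\in\mathbb{O}^{z\times r_z}$, $R_z\in\mathbb{R}^{r_z\times p}$, $S_{0,z}=R_z^\top R_z$; then $Q_z$ is independent of $R_z$, and $B_t:=Q_{z,(0,t]}^\top Q_{z,(0,t]}\sim\mathrm{Beta}_{r_z}(t/2,(z-t)/2)$ is independent of $R_z$ and, being a function of $X_{(0,z]}$, also of $S_{z,n}$. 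Since $X_{(0,t]}=Q_{z,(0,t]}R_z$ and $\mathbb{E}B_t=(t/z)I_{r_z}$, we obtain
\[
u^\top\Bigl(S_{0,t}-\tfrac tz S_{0,z}\Bigr)S_{0,n}^{-1}S_{z,n}w = a^\top\Bigl(B_t-\tfrac tz I_{r_z}\Bigr)b, \qquad a:=R_zu,\quad b:=R_zM,
\]
where $\|a\|_2^2=u^\top S_{0,z}u\le\|S_{0,z}\|_{\mathrm{op}}$ and $\|b\|_2^2=M^\top S_{0,z}M\le\|S_{0,z}\|_{\mathrm{op}}\|S_{0,n}^{-1}S_{z,n}\|_{\mathrm{op}}^2$. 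Although $a,b$ need not be $\sigma(S_{0,z},S_{z,n})$-measurable when $z<p$, the scalars $\|a\|_2,\|b\|_2,a^\top b$ are; and conditionally on $(R_z,S_{z,n})$ the matrix $B_t$ retains its $\mathrm{Beta}_{r_z}(t/2,(z-t)/2)$ law independently of $(a,b)$, so by the tower property (noting $\sigma(S_{0,z},S_{z,n})\subseteq\sigma(R_z,S_{z,n})$) it suffices to bound $\mathbb{P}(|a^\top(B_t-(t/z)I)b|\ge tx)$ for deterministic $a,b$ with $\|a\|_2\|b\|_2\le z\|S_{0,z}/z\|_{\mathrm{op}}\|S_{0,n}^{-1}S_{z,n}\|_{\mathrm{op}}=:zK$.

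Rescaling $a\mapsto a(\|b\|_2/\|a\|_2)^{1/2}$ and $b\mapsto b(\|a\|_2/\|b\|_2)^{1/2}$ leaves $a^\top(\cdot)b$ invariant (the cases $a=0$ or $b=0$ being trivial), so we may assume $\|a\|_2=\|b\|_2=\rho$ with $\rho^2=\|a\|_2\|b\|_2\le zK$. Using the polarisation identity
\[
a^\top\Bigl(B_t-\tfrac tz I\Bigr)b = \tfrac14(a+b)^\top\Bigl(B_t-\tfrac tz I\Bigr)(a+b) - \tfrac14(a-b)^\top\Bigl(B_t-\tfrac tz I\Bigr)(a-b),
\]
together with the fact that for any fixed $v$ one has $v^\top B_tv=\|Q_{z,(0,t]}v\|_2^2$, which has the same law as $\|v\|_2^2\,\beta$ for $\beta\sim\mathrm{Beta}(t/2,(z-t)/2)$ (because $Q_zv/\|v\|_2$ is uniform on $\mathcal{S}^{z-1}$, so $\sum_{i\le t}(Q_zv)_i^2$ is a scalar Beta), and the bound $\|a\pm b\|_2^2\le 4\rho^2$, a union bound yields
\[
\mathbb{P}\Bigl(\bigl|a^\top(B_t-\tfrac tz I)b\bigr|\ge tx\Bigr)\le 2\sup_{\beta\sim\mathrm{Beta}(t/2,(z-t)/2)}\mathbb{P}\Bigl(|\beta-t/z|\ge\tfrac{tx}{2\rho^2}\Bigr).
\]

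It remains to invoke a scalar Beta sub-Gaussian tail bound: $\beta\sim\mathrm{Beta}(t/2,(z-t)/2)$ has mean $t/z$ and satisfies $\mathbb{P}(|\beta-t/z|\ge\delta)\le 26\exp(-z\delta^2/2)$ uniformly in $t\in[z]$. This follows either from the Marchal--Arbel sub-Gaussian variance proxy $\{4(t/2+(z-t)/2+1)\}^{-1}\le 1/(2z)$ of the Beta distribution, or elementarily by writing $\beta=G_1/(G_1+G_2)$ with independent $G_1\sim\chi^2_t$, $G_2\sim\chi^2_{z-t}$ and applying the Laurent--Massart $\chi^2$ tail inequalities to the numerator and denominator (the constant $26$ is deliberately generous to absorb the cruder route). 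Taking $\delta=tx/(2\rho^2)$ and using $\rho^2\le zK$ gives $z\delta^2/2=zt^2x^2/(8\rho^4)\ge t^2x^2/(8zK^2)=t^2x^2\big/\bigl(8z\|S_{0,z}/z\|_{\mathrm{op}}^2\|S_{0,n}^{-1}S_{z,n}\|_{\mathrm{op}}^2\bigr)$, and combining this with the previous display (so that $52=2\times 26$) and with the first step proves the claim.

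The main obstacle is the careful bookkeeping in the first step: one must ensure that $B_t$ is genuinely independent of the conditioning $\sigma$-algebra — precisely what Lemma~\ref{lemma:generalised-beta} delivers, including in the rank-deficient regime $z<p$ that the lemma was designed to cover — and that, although $a$ and $b$ cannot be reconstructed from $(S_{0,z},S_{z,n})$ alone when $z<p$, the scalar quantities $\|a\|_2,\|b\|_2,a^\top b$ controlling the tail can. The other substantive ingredient is the scalar Beta concentration bound with the correct $1/z$ scaling holding uniformly over $t\in[z]$; pinning down clean explicit constants there (rather than merely the right order of magnitude) is what fixes the numerical constant $52$ in the statement.
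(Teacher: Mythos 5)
Your proof is correct and follows the same overall skeleton as the paper's --- condition on $(S_{0,z},S_{z,n})$, extract a matrix-variate Beta factor from $X_{(0,z]}$ via the generalised Beta machinery (you use Lemma~\ref{lemma:generalised-beta} directly where the paper routes through Corollary~\ref{cor:generalise-betap} and a pseudo-inverse of the Cholesky factor), reduce the bilinear form $a^\top(B_t-\tfrac tz I)b$ to scalar $\mathrm{Beta}(t/2,(z-t)/2)$ marginals, and finish with the Marchal--Arbel sub-Gaussian tail; the factors $\|S_{0,z}/z\|_{\mathrm{op}}$ and $\|S_{0,n}^{-1}S_{z,n}\|_{\mathrm{op}}$ enter in the same way in both arguments, and you land on the same exponent and the same constant $52$. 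The one step where you genuinely diverge is the reduction of the off-diagonal bilinear form: the paper rotates $(\tilde u,\tilde v)$ into the span of $(e_1,e_2)$, splits into a diagonal entry and an off-diagonal entry of $B$, and controls the operator norm of the resulting $2\times 2$ principal submatrix via a $13$-point net on the circle together with \citet[Lemma~5.4]{Vershynin2012} and the marginal law from \citet[Theorem~5.3.12]{gupta1999matrix} (yielding $2\times 13\times 2=52$), whereas you use the polarisation identity to write $a^\top(B_t-\tfrac tz I)b$ as a difference of two quadratic forms in fixed directions, each of which is exactly a rescaled scalar Beta (yielding $2\times 26=52$ with a deliberately padded scalar constant). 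Your route is the more elementary of the two: it needs no net, no operator-norm detour, and you even rederive the marginal Beta law directly from the uniformity of $Q_zv$ on the sphere rather than citing Gupta--Nagar; the paper's rotation-plus-net argument is slightly more generic (it would survive if one only had concentration for diagonal entries of $B$ rather than for arbitrary fixed directions), but here both deliver an identical bound. Your handling of the measurability subtleties in the rank-deficient case $z<p$ --- conditioning on the finer $\sigma$-algebra $\sigma(R_z,S_{z,n})$, noting that the resulting bound depends only on $\|a\|_2\|b\|_2$, which is $\sigma(S_{0,z},S_{z,n})$-measurable, and applying the tower property --- is sound and, if anything, more explicit than the paper's.
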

\begin{proof}
	For notational simplicity, we use $\mathbb{P}^z$ and $\mathbb{E}^z$ to denote the conditional probability and expectation with respect to the $\sigma$-algebra generated by $( S_{0,z}, S_{z,n} )$.
	Note that $S_{0,t} \mid (S_{0,z}, S_{z,n}) \stackrel{\mathrm{d}}{=} S_{0,t} \mid S_{0,z}$ and $\mathbb{E}^z[S_{0,t}] = (t/z) S_{0,z}$.

	Let $r:=\min\{z,p\}$. We note $S_{0,z}$ has (almost surely) rank $r$, and write $S_{0,z} = R^\top R$ for the (almost surely unique) Cholesky decomposition of $S_{0,z}$ such that $R\in\mathbb{R}^{r\times p}$ is an upper-triangular matrix with positive diagonal entries. Write $R^\dagger\in\mathbb{R}^{p\times r}$ for the (almost surely unique) Moore--Penrose pseudo-inverse of $R$ such that $R R^\dagger = I_p$.  By Corollary~\ref{cor:generalise-betap}, the matrix $B:= (R^\dagger)^\top S_{0,t} R^\dagger \sim \mathrm{Beta}_r(t/2, (z-t)/2)$ has a matrix-variate Beta distribution, and is independent of $S_{0,z}$ with $(t/z)I_p$ as its (conditional) mean. Observe that $R^\top B R = (R^\dagger R)^\top S_{0,t} (R^\dagger R) = S_{0,t}$, since $R^\dagger R$ is a (symmetric) orthogonal projection matrix onto the row space of $X_{(0,z]}$, which contains the row space of $X_{(0,t]}$.

	Define $v := S_{0,n}^{-1} S_{z,n} w / \| S_{0,n}^{-1} S_{z,n} w \|_2$.
	Writing $\tilde u:= R u / \sqrt{z}$ and $\tilde v:=R v / \sqrt{z}$, we have
	\begin{align*}
		\mathbb{P}^z\biggl[\frac{1}{t}| u^\top \{S_{0,t} - \mathbb{E}^z(S_{0,t})\} & S_{0,n}^{-1}  S_{z,n} w | \geq x\biggr]  \le  \mathbb{P}^z\biggl[\frac{1}{t} | u^\top \{S_{0,t} - \mathbb{E}^z(S_{0,t})\} v| \geq \frac{x}{\| S_{0,n}^{-1} S_{0,z}\|_{\mathrm{op}}}\biggr]                                                                \\
		                                                                           & = \mathbb{P}^z\biggl\{\biggl| \tilde u^\top \biggl(\frac{z}{t}B - I_p\biggr) \tilde v\biggr| \geq \frac{x}{\| S_{0,n}^{-1} S_{0,z}\|_{\mathrm{op}}} \biggr\}                                                                                              \\
		                                                                           & \leq \mathbb{P}^z\biggl\{\biggl| \biggl(\frac{\tilde u}{\|\tilde u\|_2}\biggr)^\top \biggl(\frac{z}{t}B - I_p\biggr) \frac{\tilde v}{\|\tilde v\|_2} \biggr| \geq \frac{x}{\| S_{0,n}^{-1} S_{0,z}\|_{\mathrm{op}} \|S_{0,z}/z\|_{\mathrm{op}}} \biggr\}.
	\end{align*}
	Write shorthand $\psi := \| S_{0,n}^{-1} S_{0,z}\|_{\mathrm{op}} \|S_{0,z}/z\|_{\mathrm{op}}$, which is measurable with respect to the $\sigma$-algebra generated by $(S_{0,z}, S_{z,n})$.
	There exists an orthogonal matrix $U$ such that $U\tilde u/\|\tilde u\|_2 = e_1$ and $U\tilde v/\|\tilde v\|_2 = \alpha e_1 + \beta e_2$ for real $\alpha$ and $\beta$ such that $\alpha^2+\beta^2=1$, where $e_j$ denotes the $j$th standard basis vector in $\mathbb{R}^p$. Using the fact that $B \mid (S_{0,z}, S_{z,n}) \stackrel{\mathrm{d}}{=} U^\top BU \mid (S_{0,z}, S_{z,n})$, we have for $J=\{1,2\}$ that
	\begin{align*}
		\mathbb{P}^z\biggl\{\biggl| \biggl(\frac{\tilde u}{\|\tilde u\|_2}\biggr)^\top \biggl(\frac{z}{t}B & - I_p\biggr) \frac{\tilde v}{\|\tilde v\|_2} \biggr| \geq \frac{x}{\psi} \biggr\}                                                                                                      \\
		                                                                                                   & \leq \mathbb{P}^z\biggl\{ \biggl| \frac{z}{t}B_{1,1} - 1\biggr| \geq \frac{x}{\sqrt{2} \psi }\biggr\} + \mathbb{P}^z\biggl\{\frac{z}{t}|B_{1,2}| \geq \frac{x}{\sqrt{2} \psi }\biggr\} \\
		                                                                                                   & \leq 2\mathbb{P}^z\biggl\{\biggl\|\biggl(B - \frac{t}{z}I_p\biggr)_{J,J}\biggr\|_{\mathrm{op}} \geq \frac{tx}{\sqrt{2} z \psi}\biggr\},
	\end{align*}
	where the first inequality holds by noting $|\alpha| + |\beta| \le (2\alpha^2 + 2\beta^2)^{1/2}= \sqrt{2}$.
	Note that $\{w\in\mathcal{S}^{p-1}:\mathrm{supp}(w)\subseteq J\}$ is isomorphic to $S^1$, which contains a $(1/4)$-net $\mathcal{N}$ of cardinality $\lceil \frac{2\pi}{4\arcsin(1/8)}\rceil= 13$. By \citet[Theorem~5.3.12]{gupta1999matrix}, for each $w\in\mathcal{N}$, we have $w^\top B w \sim \mathrm{Beta}(t/2, (z-t)/2)$. Hence, by \citet[Lemma~5.4]{Vershynin2012} and a union bound, we have
	\begin{align*}
		\mathbb{P}^z\biggl\{\biggl\|\biggl(B - \frac{t}{z}I_p\biggr)_{J,J}\biggr\|_{\mathrm{op}} \geq \frac{tx}{\sqrt{2} z \psi} \biggr\} & \leq \mathbb{P}^z\biggl\{\sup_{w\in\mathcal{N}} \bigl| w^\top B w - t/z\bigr| \geq \frac{tx}{2\sqrt{2} z \psi}\biggr\} \\
		                                                                                                                                  & \leq 13 \mathbb{P}^z \biggl\{ \bigl|B_{1,1} - t/z\bigr| \geq \frac{tx}{2\sqrt{2} z \psi}\biggr\}                       \\
		                                                                                                                                  & \leq 26\exp\biggl\{-\frac{t^2x^2}{8z\psi^2}\biggr\},
	\end{align*}
	where we have used \citet[Theorem~2.1]{marchal2017sub} in the final inequality.
\end{proof}

\begin{lemma}
	\label{Lem:Sphere}
	Let $X= (X_1,\ldots,X_p)^\top$ be uniformly distributed on the sphere $\mathcal{S}^{p-1}$. Then, for $\delta \geq e^{-p/16}$, we have

	\[
		\mathbb{P}\biggl( \|X\|_{\infty} > \sqrt{\frac{4 \log(2/\delta)}{p}} \biggr) \le p \delta.
		\]
\end{lemma}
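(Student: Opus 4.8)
The plan is to reduce to a single coordinate by a union bound and then control the tail of one coordinate of a uniform point on the sphere via the Gaussian representation $X\stackrel{\mathrm{d}}{=}G/\|G\|_2$ with $G=(G_1,\dots,G_p)\sim N_p(0,I_p)$.

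First I would dispose of two trivial cases. If $p\delta\ge 1$ the claim holds since its left-hand side is a probability, so we may assume $\delta<1$; and writing $t:=\sqrt{4\log(2/\delta)/p}$, if $t\ge 1$ then $\{\|X\|_\infty>t\}$ is empty because $\|X\|_\infty\le\|X\|_2=1$, so we may assume $t<1$. By rotational (in particular permutation) invariance the coordinates of $X$ are exchangeable, hence $\mathbb{P}(\|X\|_\infty>t)\le p\,\mathbb{P}(|X_1|>t)$, and it suffices to prove $\mathbb{P}(|X_1|>t)\le\delta$.

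Next, using $X\stackrel{\mathrm{d}}{=}G/\|G\|_2$ and setting $S:=\sum_{j=2}^p G_j^2\sim\chi^2_{p-1}$, which is independent of $G_1$, I would rewrite $\{|X_1|>t\}=\{G_1^2>t^2(G_1^2+S)\}=\bigl\{G_1^2>\tfrac{t^2}{1-t^2}S\bigr\}$, which is valid since $t<1$. Conditioning on $S$ and applying the Gaussian tail bound $\mathbb{P}(|G_1|>a)\le 2e^{-a^2/2}$ with $a=(t^2S/(1-t^2))^{1/2}$, then integrating over $S$ using the chi-squared moment generating function $\mathbb{E}[e^{-\lambda S}]=(1+2\lambda)^{-(p-1)/2}$ at $\lambda=\tfrac{t^2}{2(1-t^2)}$ (so that $1+2\lambda=(1-t^2)^{-1}$), gives $\mathbb{P}(|X_1|>t)\le 2(1-t^2)^{(p-1)/2}\le 2e^{-t^2(p-1)/2}$. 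Finally, plugging in $t^2=4\log(2/\delta)/p$ yields $t^2(p-1)/2=2(1-1/p)\log(2/\delta)$, so $\mathbb{P}(|X_1|>t)\le 2(\delta/2)^{2(1-1/p)}\le 2\cdot(\delta/2)=\delta$, where the last inequality uses $\delta/2<1$ together with $2(1-1/p)\ge 1$ for $p\ge 2$ (the case $p=1$ being already covered by $t\ge 1$).

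The only step demanding real care is keeping the constants sharp enough that the factor $p$ from the union bound is exactly absorbed by the per-coordinate bound $\delta$. This is why I would integrate the exact chi-squared moment generating function rather than split on an event such as $\{\|G\|_2^2\ge p/2\}$ and invoke a $\chi^2$ concentration inequality: such a split leaves an additive term of order $e^{-p/16}$ — which is presumably where the hypothesis $\delta\ge e^{-p/16}$ originates — and only yields a per-coordinate bound of order $\tfrac32\delta$, which is too weak for the stated constant.
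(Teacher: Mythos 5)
Your proof is correct, and it takes a genuinely different route from the paper's. Both arguments start from the Gaussian representation $X\stackrel{\mathrm{d}}{=}G/\|G\|_2$, but the paper then splits into two global events --- some $|G_j|$ exceeding $\sqrt{2\log(2/\delta)}$, and $\|G\|_2^2<p/2$ --- controlling the latter with the Laurent--Massart $\chi^2$ lower-tail bound, which is exactly where the hypothesis $\delta\ge e^{-p/16}$ enters (it guarantees $p-2\sqrt{p\log(1/\delta)}\ge p/2$), before finishing with a union bound. You instead work coordinate-wise: writing $\{|X_1|>t\}=\{G_1^2>\tfrac{t^2}{1-t^2}S\}$ with $S\sim\chi^2_{p-1}$ independent of $G_1$, conditioning, and integrating the Gaussian tail against the exact $\chi^2_{p-1}$ moment generating function, which yields the clean bound $\mathbb{P}(|X_1|>t)\le 2(1-t^2)^{(p-1)/2}$ (equivalently, a Chernoff bound for $X_1^2\sim\mathrm{Beta}(1/2,(p-1)/2)$). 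This buys two things: the constant $p\delta$ comes out with no juggling of the two error terms, and the hypothesis $\delta\ge e^{-p/16}$ is never used (your trivial-case reductions handle $p=1$ and $t\ge1$ with only $\delta<1$), so you in fact prove a slightly stronger statement valid for all $\delta\in(0,1)$; the paper's route is shorter and leans on off-the-shelf concentration inequalities at the cost of that extra hypothesis. One small quibble with your closing commentary: the split approach need not pay the $\chi^2$ term per coordinate --- the paper applies it once globally, so with the one-sided tail $\mathbb{P}(|Z|>a)\le e^{-a^2/2}$ the total is about $p\,\delta/2+\delta\le p\delta$ for $p\ge2$ --- but this does not affect the validity of your own argument.
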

\begin{proof}
	Let $Z_1,\ldots,Z_p$ be independent $N(0,1)$ random variables, then $X_1 \stackrel{\text{d}}{=} Z_1 / (Z_1^2+\cdots + Z_p^2)^{1/2}$. By a standard Gaussian tail bound, we have
	\[
		\mathbb{P}\bigl\{Z_1 > \sqrt{2\log(1/\delta)}\bigr\} \leq \delta.
	\]
	Moreover, $\sum_{j=1}^p Z_j^2 \sim \chi^2_p$. Since we have $\delta \geq e^{-p/16}$, by \citet[Lemma~1]{LaurentMassart2000},
	\[
		\mathbb{P}\biggl(\sum_{j=1}^p Z_j^2 < \frac{p}{2}\biggr)\leq \mathbb{P}\biggl(\sum_{j=1}^p Z_j^2 < p - 2\sqrt{p\log(1/\delta)}\biggr) \leq \delta.
	\]
	The result follows by combining the above two bounds and applying a union bound.
\end{proof}

Recall $B_0(k)\subseteq \mathbb{R}^p$ is the set of $k$-sparse unit vectors.
\begin{lemma}
	For any $A\in\mathbb{R}^{p\times p}$, and any $\epsilon\in(0,1)$, there exists an $\epsilon$-net $\mathcal{N}_\epsilon$ of $B_0(k)$ of cardinality at most $\{(1+2/\epsilon)ep/k\}^k$ such that
	\label{Lem:Net}
	\[
		\sup_{u\in B_0(k)} u^\top A v \leq (1-\epsilon)^{-1} \max_{u\in \mathcal{N}_\epsilon} u^\top A v.
	\]
\end{lemma}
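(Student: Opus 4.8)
The plan is a standard support-wise covering argument. First I would fix, for each subset $S\subseteq[p]$ with $|S|=k$, the associated coordinate ball $B^S:=\{u\in B_0(k):\supp(u)\subseteq S\}$, which is just the Euclidean unit ball of the $k$-dimensional subspace $\mathbb{R}^S$. A classical volumetric/packing bound — place disjoint balls of radius $\epsilon/2$ around the net points, observe they all lie inside $(1+\epsilon/2)B^S$, and compare $k$-dimensional volumes — produces an $\epsilon$-net $\mathcal{N}_S\subseteq B^S$ of $B^S$ with $|\mathcal{N}_S|\le(1+2/\epsilon)^k$. I would then set $\mathcal{N}_\epsilon:=\bigcup_{|S|=k}\mathcal{N}_S$. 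Since every $u\in B_0(k)$ has support contained in some $S$ of size $k$, the set $\mathcal{N}_\epsilon$ is an $\epsilon$-net of $B_0(k)$, and $|\mathcal{N}_\epsilon|\le\binom{p}{k}(1+2/\epsilon)^k\le(ep/k)^k(1+2/\epsilon)^k=\{(1+2/\epsilon)ep/k\}^k$, using the elementary bound $\binom{p}{k}\le(ep/k)^k$.

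For the approximation inequality, write $\Psi:=\sup_{u\in B_0(k)}u^\top A v$, which is finite and attained because $B_0(k)$ is a finite union of compact balls, and $M:=\max_{u\in\mathcal{N}_\epsilon}u^\top A v$. Given any $u\in B_0(k)$, pick $S\supseteq\supp(u)$ with $|S|=k$ and a net point $u'\in\mathcal{N}_S$ with $\|u-u'\|_2\le\epsilon$; the key point is that $u-u'$ is again supported on $S$, hence $k$-sparse, so $(u-u')/\|u-u'\|_2\in B_0(k)$ whenever $u\neq u'$. Decomposing $u^\top A v=(u')^\top A v+(u-u')^\top A v\le M+\|u-u'\|_2\,\Psi\le M+\epsilon\Psi$ (trivially true also when $u=u'$) and taking the supremum over $u\in B_0(k)$ gives $\Psi\le M+\epsilon\Psi$. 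Since $0\in B_0(k)$ forces $\Psi\ge0$ and $1-\epsilon>0$, this rearranges to $\Psi\le(1-\epsilon)^{-1}M$, which is exactly the assertion.

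There is no genuine obstacle here; the only subtlety worth flagging is that one must build the net support by support rather than as a single $\epsilon$-net of $B_0(k)$, so that the residual $u-u'$ remains $k$-sparse and can be renormalised back into $B_0(k)$ — this is precisely what makes the self-bounding step $\Psi\le M+\epsilon\Psi$ valid (a naive net would only give $2k$-sparsity for the residual, which would still work but with a worse constant and a different set in the supremum). Finally I would remark that the degenerate case $k\ge p$, in which $B_0(k)$ is the entire unit ball of $\mathbb{R}^p$ and a single net suffices, is subsumed by the same cardinality bound.
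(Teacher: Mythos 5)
Your proposal is correct and follows essentially the same route as the paper: build an $\epsilon$-net on each $k$-dimensional coordinate ball (the paper cites Vershynin's Lemma~5.2 where you give the standard volumetric argument, which is the same bound), take the union over the $\binom{p}{k}$ supports, and exploit that the residual $u-u'$ is still $k$-sparse to get the self-bounding inequality $\Psi\le M+\epsilon\Psi$. Your explicit remarks on $\Psi\ge 0$ and attainment are harmless additions; nothing differs in substance.
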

\begin{proof}
	By \citet[Lemma~5.2]{Vershynin2012}, for each subset $S\subseteq [p]$ of cardinality $k$, there exists an $\epsilon$-net $\mathcal{N}_S$ of $\{v\in B_0(k): \mathrm{supp}(v)\subseteq S\}$ of cardinality at most $(1+2/\epsilon)^{k}$.  Define $\mathcal{N}_\epsilon := \cup_{S\subseteq [p]: |S| = k} \mathcal{N}_S$, then $|\mathcal{N}_\epsilon| \leq \binom{p}{k}(1+2\epsilon)^k \leq \{(1+2\epsilon)ep/k\}^k$.  For any fixed $x \in B_0(k)$, find $\tilde x \in \mathcal{N}_\epsilon$ such that $\|x-\tilde x\|_2 \leq \epsilon$ and $\|x-\tilde x\|_0\leq k$. Thus,
	\begin{align*}
		x^\top A v & = (x - \tilde{x})^\top A v +  \tilde{x}^\top A v                                                      \\
		           & \leq \|x - \tilde{x}\|_2 \sup_{u \in B_0(k)} u^\top A v + \sup_{u\in \mathcal{N}_\epsilon} u^\top A v \\
		           & \leq \epsilon \sup_{u \in B_0(k)} u^\top A v +  \sup_{u\in \mathcal{N}_\epsilon} u^\top Av.
	\end{align*}
	The desired result follows by taking supremum over $x \in B_0(k)$ above.
\end{proof}

\bibliographystyle{custom2author}
\bibliography{cpreg}

\end{document}